\newcommand{\rootnode}[3][above]{
	\begin{scope}[shift={#2}]
		\draw[very thick,fill=white] (0,0) circle (0.15);
		\node at (0,0) [label=#1:{\small #3}] {};
	\end{scope}
}
\newcommand{\singleline}[1]{
	\begin{scope}[shift={#1}]
		\draw[thick] (0,0) -- (2,0);
	\end{scope}
}
\newcommand{\singleverticalline}[1]{
	\begin{scope}[shift={#1}]
		\draw[thick] (0,0) -- (0,-2);
	\end{scope}
}
\newcommand{\doublelinearrowright}[1]{
	\begin{scope}[shift={#1}]
		\draw[thick] (0,0.05) -- (2,0.05);
		\draw[thick] (0,-0.05) -- (2,-0.05);
		\draw[very thick] (1.1,0) -- (0.9,0.15);
		\draw[very thick] (1.1,0) -- (0.9,-0.15);
	\end{scope}
}
\newcommand{\triplelinearrowright}[1]{
	\begin{scope}[shift={#1}]
		\draw[thick] (0,0.1) -- (2,0.1);
		\draw[thick] (0,0) -- (2,0);
		\draw[thick] (0,-0.1) -- (2,-0.1);
		\draw[very thick] (1.1,0) -- (0.9,0.15);
		\draw[very thick] (1.1,0) -- (0.9,-0.15);
	\end{scope}
}
\theoremstyle{definition}
\newtheorem{thm}{Theorem}[section]
\newtheorem{coro}[thm]{Corollary}
\newtheorem{prop}[thm]{Proposition}
\newtheorem{lemma}[thm]{Lemma}
\newtheorem{remark}[thm]{Remark}
\DeclareMathOperator{\Spin}{Spin}
\DeclareMathOperator{\GL}{GL}
\DeclareMathOperator{\SL}{SL}
\DeclareMathOperator{\SO}{SO}
\DeclareMathOperator{\Sp}{Sp}
\DeclareMathOperator{\Sym}{Sym}
\DeclareMathOperator{\Gr}{Gr}
\DeclareMathOperator{\Sc}{\mathbb{S}} 
\DeclareMathOperator{\coker}{coker}
\DeclareMathOperator{\rank}{rank}
\DeclareMathOperator{\depth}{depth}
\DeclareMathOperator{\height}{height}
\DeclareMathOperator{\codim}{codim}
\DeclareMathOperator{\disc}{disc}
\DeclareMathOperator{\im}{im}
\DeclareMathOperator{\id}{id}
\DeclareMathOperator{\sgn}{sgn}
\def\acts{\mathrel{\reflectbox{$\righttoleftarrow$}}}
\renewcommand{\geq}{\geqslant}
\renewcommand{\leq}{\leqslant}
\newcommand{\QQ}{\mathbb{Q}}
\newcommand{\CC}{\mathbb{C}}
\newcommand{\ZZ}{\mathbb{Z}}
\newcommand{\tr}{tr}
\renewcommand{\gg}{\mathfrak{g}}
\newcommand{\Obar}{\overline{\mathcal{O}}}
\newcommand{\OO}{\mathcal{O}}
\newcommand{\CM}{Cohen-Macaulay}
\author{Federico Galetto}
\title{Free resolutions of orbit closures for the representations associated to gradings on Lie algebras of type $E_6$, $F_4$ and $G_2$}
\date{\today}
\begin{document}
\setcounter{MaxMatrixCols}{14}

\maketitle
\begin{abstract}
The irreducible representations of complex semisimple algebraic groups with finitely many orbits are parametrized by graded simple Lie algebras. For the exceptional Lie algebras, Kra\'skiewicz and Weyman exhibit the Hilbert polynomials and the expected minimal free resolutions of the normalization of the orbit closures. We present an interactive method to construct explicitly these and related resolutions in Macaulay2. The method is then used in the cases of the Lie algebras of type $E_6$, $F_4$, and $G_2$ to confirm the shape of the expected resolutions as well as some geometric properties of the orbit closures.
\end{abstract}

\tableofcontents

\section{Introduction}\label{intro}
In \cite{MR575790}, Kac classified the irreducible representations of semisimple groups whose nullcone contains finitely many orbits, the so-called visible representations. Among these representations are those which themselves contain finitely many orbits (when the representation coincides with its nullcone); these are referred to as representations of type I.

The representations of type I are parametrized by pairs $(X_n, x)$, where $X_n$ is a Dynkin diagram and $x$ is a ``distinguished'' node of $X_n$. The nodes of the Dynkin diagrams are numbered according to the conventions of Bourbaki \cite{MR0240238}. The choice of a distinguished node induces a grading on the root system $X_n$ and the corresponding simple Lie algebra $\gg$
	\[\gg =\bigoplus_{i\in\ZZ} \gg_i\]
satisfying the following:
\begin{itemize}
\item the Cartan subalgebra $\mathfrak{h}$ is contained in $\gg_0$;
\item the root space $\gg_\beta$ is contained in $\gg_i$, where $i$ is the coefficient of the simple root $\alpha$, corresponding the node $x$, in the expression of $\beta$ as a linear combination of simple roots;
\item the grading is compatible with the Lie bracket of $\gg$.
\end{itemize}
The representation for the pair $(X_n, x)$ is given by the vector space $\gg_1$ with the action of the group $G_0 \times \CC^\times$, where $G_0$ is the adjoint group of the Lie subalgebra $\gg_0$ of $\gg$; the second factor is the copy of $\CC^\times$ that occurs in the maximal torus of the adjoint group of $\gg$ but not in the maximal torus of $G_0$.

The orbit closures of the representations of type I were classified by Vinberg. In \cite{MR0430168}, he showed that the orbits are the irreducible components of the intersections of the nilpotent orbits in $\gg$ with the graded component $\gg_1$. Later, in \cite{MR549013}, Vinberg gave a combinatorial description of the orbits in terms of some graded subalgebras of $\gg$.

The study of the minimal free resolutions of these orbit closures goes back to Lascoux and his paper on determinantal varieties \cite{MR520233}. J\'ozefiak,  Pragacz and Weyman developed the case of determinantal ideals of symmetric and antisymmetric matrices \cite{MR646819}. The more general case of rank varieties for the classical types was studied by Lovett \cite{MR2309889}. For the exceptional types, Kra\'skiewicz and Weyman \cite{Kraskiewicz:2012yq} calculate the Hilbert functions of the normalization of the orbit closures. They also describe the terms of the expected resolutions of the normalizations as representations of the group acting.

The goal of this work is to introduce a framework for constructing the complexes of Kra\'skiewicz and Weyman explicitly. This involves computational methods, carried out in the software system Macaulay2 \cite{Grayson:uq}, and techniques from representation theory, commutative algebra and algebraic geometry. As a result, we can verify the shape of the free resolutions conjectured by Kra\'skiewicz and Weyman in many cases, whenever our computations are feasible; moreover, we obtain the minimal free resolutions of the coordinate rings of the orbit closures and confirm results about Cohen-Macaulay and Gorenstein orbit closures. Finally, we establish containment and singularity of the orbit closures. In particular, we examine the exceptional Lie algebras of type $E_6$, $F_4$ and $G_2$, and provide a detailed analysis of each orbit closure for those cases.

In the next section, we provide an overview of the geometric technique used in the work of Kra\'skiewicz and Weyman to extract information about the orbit closures. In section \ref{sec:computations}, we describe the computations ran in Macaulay2; in particular, we introduce the \emph{interactive method for syzygies} and the \emph{cone procedure} for non normal orbit closures. In section \ref{sec:exactness}, we discuss the \emph{equivariant exactness criterion}, a simple method to establish exactness of our complexes. In section \ref{sec:equations}, we explain how to obtain the defining equations for the orbit closures and verify they generate a radical ideal.
Finally, the last sections are devoted to a detailed analysis of the orbit closures for the representations associated with gradings on the Lie algebras of type $E_6$, $F_4$ and $G_2$. The appendix contains a brief description of the equivariant maps used to construct the differentials in our complexes.

This work is accompanied by a collection of Macaulay2 files containing the defining ideals for the orbit closures as well as presentations of related modules. All files are available online at \href{http://www.mast.queensu.ca/~galetto/orbits}{\url{http://www.mast.queensu.ca/~galetto/orbits}}.

\section{Geometric technique}
The work of Kra\'skiewicz and Weyman relies on the use of the geometric technique of calculating syzygies. We provide here a brief overview of the technique applied to the context of our work. For more details, the reader can consult \cite{MR1988690}.

Let us fix a representation with finitely many orbits; using the notation of the introduction, we have an action $G_0 \acts \gg_1$. Consider $\gg_1$ as an affine space $\mathbb{A}^N_\CC$; every orbit closure $\Obar$ in $\gg_1$ is an affine variety in $\mathbb{A}^N_\CC$. Moreover, every orbit closure $\Obar$ admits a desingularization $Z$ which is the total space of a homogeneous vector bundle $\mathcal{S}$ on some homogeneous space $G_0/P$, for some parabolic subgroup $P$ of $G_0$. The space $\mathbb{A}^N_\CC \times G_0/P$ can be viewed as the total space of a trivial vector bundle $\mathcal{E}$ of rank $N$ over $G_0/P$ and $\mathcal{S}$ is a subbundle of $\mathcal{E}$. Altogether we have the following picture:
	\begin{center}
	\begin{tikzpicture}
			\matrix (m) [matrix of math nodes, row sep=20pt, column sep=30pt, text height=1.5ex, text depth=0.25ex, ampersand replacement=\&]
			{ Z \& \mathbb{A}^N \times G_0 / P \\
			\overline{\mathcal{O}} \& \mathbb{A}^N \\ };
			\path[->,font=\scriptsize]
			(m-1-1) edge node[auto] {$q'$} (m-2-1)
			(m-1-2) edge node[auto] {$q$} (m-2-2);
			\path[right hook->,font=\scriptsize]
			(m-1-1) edge node[auto] {} (m-1-2)
			(m-2-1) edge node[auto] {} (m-2-2);
	\end{tikzpicture}
	\end{center}
where $q$ is the projection on $\mathbb{A}^N$ and $q'$ is its restriction to $Z$.

Let $A=\CC [\mathbb{A}^N_\CC]$; this is the polynomial ring over which we will carry out all computations. Also, introduce the vector bundle $\xi = (\mathcal{E}/\mathcal{S})^*$ on $G_0/P$. We can now state the basic theorem \cite[Thm. 5.1.2]{MR1988690}. 

\begin{thm}
Define the graded free $A$-modules:
\[ F_i : = \bigoplus_{j \geq 0} H^j \left( G_0 / P, \textstyle \bigwedge^{i+j} \xi \right) \otimes_\CC A (- i - j).\]
There exist minimal $G_0$-equivariant differentials
	\[d_i \colon F_i \longrightarrow F_{i-1}\]
of degree 0 such that $F_\bullet$ is a complex of graded free $A$-modules with
	\[H_{-i} (F_\bullet) = \mathcal{R}^i q_* \mathcal{O}_Z.\]
In particular, $F_\bullet$ is exact in positive degrees.
\end{thm}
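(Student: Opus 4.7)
The plan is to resolve $\mathcal{O}_Z$ on the ambient space $Y = \mathbb{A}^N \times G_0/P$ by a Koszul complex, push it down to $\mathbb{A}^N$ via $\mathbf{R}q_*$, and then extract a minimal complex of free $A$-modules. The crucial geometric input is that $Z \subset Y$ is cut out by a regular section. Let $p\colon Y \to G_0/P$ be the projection and view the pullback $p^*(\mathcal{E}/\mathcal{S})$ as a vector bundle on $Y$. There is a tautological section $s$ sending $(v,x)$ to the image of $v$ in $(\mathcal{E}/\mathcal{S})_x$, and $s$ vanishes exactly on $Z$. Since $Z$ is the total space of $\mathcal{S} \to G_0/P$, we have $\codim_Y Z = \rank(\mathcal{E}/\mathcal{S}) = \rank \xi$, so $s$ is regular. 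Hence the Koszul complex
\[ K_\bullet \;=\; \textstyle\bigwedge^\bullet p^*\xi \otimes_{\mathcal{O}_Y} \mathcal{O}_Y, \]
with differential given by contraction against $s$, is a $G_0$-equivariant locally free resolution of $\mathcal{O}_Z$. Tracking the $\CC^\times$-grading coming from the linear coordinates on $\mathbb{A}^N$, each piece $\bigwedge^i p^*\xi$ is shifted to internal degree $i$.

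Next, I push $K_\bullet$ forward along $q$. Since $q|_Z = q'$ and $K_\bullet \to \mathcal{O}_Z$ is a quasi-isomorphism, $\mathbf{R}q_*(K_\bullet) \simeq \mathbf{R}q'_*(\mathcal{O}_Z)$. The morphism $q$ is proper (because $G_0/P$ is projective) and each term of $K_\bullet$ is a pullback from $G_0/P$, so by flat base change
\[ R^j q_*\bigl(\textstyle\bigwedge^i p^*\xi\bigr) \;=\; H^j\bigl(G_0/P,\,\bigwedge^i \xi\bigr) \otimes_\CC A(-i), \]
a finitely generated free graded $A$-module carrying a natural $G_0$-action. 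To assemble these pieces into a single complex I choose a $G_0$-equivariant acyclic (for instance Čech) resolution of each $\bigwedge^i p^*\xi$, thereby producing a first-quadrant double complex of free $A$-modules whose totalization represents $\mathbf{R}q_*(\mathcal{O}_Z)$ in the derived category.

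To conclude I extract a minimal complex. The standard structure theorem for bounded complexes of finitely generated free graded $A$-modules says that any such complex splits, $G_0$-equivariantly, as the direct sum of a minimal subcomplex and trivial acyclic summands of the form $A(-k) \xrightarrow{\id} A(-k)$; the equivariance of the splitting can be arranged by averaging over the maximal compact of $G_0$. Applying this to the totalization and grouping terms by internal degree produces
\[ F_i \;=\; \bigoplus_{j \geq 0} H^j\bigl(G_0/P,\,\textstyle\bigwedge^{i+j} \xi\bigr) \otimes_\CC A(-i-j), \]
together with $G_0$-equivariant differentials of degree $0$. The identification $H_{-i}(F_\bullet) = R^i q_* \mathcal{O}_Z$ then follows from the convergence of the associated spectral sequence, and vanishing of $H_i(F_\bullet)$ for $i > 0$ comes from the fact that $R^j q_* \mathcal{O}_Z = 0$ for $j < 0$.

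The hard part is the actual construction of the differentials $d_i$. Only the "diagonal" component, arising directly from the Koszul differential, is manifest; all remaining components are higher differentials of the spectral sequence and must in principle be produced by explicit zig-zagging through Čech representatives. This is precisely why a symbolic computer-algebra treatment — the interactive method of Section \ref{sec:computations} — is essential to realize the $F_\bullet$ in concrete examples. Minimality itself requires no extra argument: since every term of $F_\bullet$ is generated in strictly positive internal degree and the differentials are homogeneous of degree $0$, no unit entries can appear.
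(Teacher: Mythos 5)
The paper itself does not prove this theorem; it quotes it from Weyman \cite[Thm.\ 5.1.2]{MR1988690}, and your outline follows that same standard route: Koszul resolution of $\mathcal{O}_Z$ by $\bigwedge^\bullet p^*\xi$ via the tautological section, derived pushforward along the proper projection $q$, and passage to a minimal representative. Up through the identification $R^j q_*(\bigwedge^i p^*\xi)\cong H^j(G_0/P,\bigwedge^i\xi)\otimes_\CC A(-i)$ your steps are sound.

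The gap is in the final step, where you must show that the minimal representative has exactly the terms $F_i=\bigoplus_j H^j(G_0/P,\bigwedge^{i+j}\xi)\otimes_\CC A(-i-j)$. First, the splitting-off-of-trivial-summands theorem you invoke applies to bounded complexes of \emph{finitely generated} free graded modules, while the \v{C}ech totalization has terms of infinite rank; one must first replace it by a quasi-isomorphic bounded complex of finitely generated free graded modules (possible since $Rq_*$ of a coherent complex under a proper map is coherent), and then ``grouping terms by internal degree'' of the totalization says nothing about the terms of the resulting minimal complex. Second, and more seriously, your closing claim that minimality ``requires no extra argument'' because all generators lie in positive internal degree and the differentials have degree $0$ is not a valid argument: a unit entry of $d_i$ only requires a generator of $F_i$ and a generator of $F_{i-1}$ of the \emph{same} internal degree, and this occurs precisely between the summands $H^j(\bigwedge^{i+j}\xi)\otimes A(-i-j)\subset F_i$ and $H^{j+1}(\bigwedge^{i+j}\xi)\otimes A(-i-j)\subset F_{i-1}$, both generated in degree $i+j$. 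So degree bookkeeping does not exclude exactly the cancellation (between consecutive cohomology groups of the same bundle) that the theorem asserts does not happen. The standard repair, which is in substance Weyman's argument, is to compute the derived fibre at the origin: the Koszul differential is contraction with a section that is linear in the $\mathbb{A}^N$-coordinates, hence vanishes modulo $A_{>0}$, so $Rq_*(K_\bullet)\otimes^{L}_{A}\CC\simeq R\Gamma\bigl(G_0/P,\bigoplus_t \bigwedge^t\xi[t]\bigr)$, whose $i$-th homology is $\bigoplus_j H^j(G_0/P,\bigwedge^{i+j}\xi)$ placed in internal degree $i+j$. Since any minimal bounded complex of finitely generated free graded modules satisfies $F_i\otimes_A\CC\cong H_i(F_\bullet\otimes_A\CC)$, this pins down the terms of $F_\bullet$ exactly as stated; minimality, equivariance, and $H_{-i}(F_\bullet)=R^iq_*\mathcal{O}_Z$ then follow from $F_\bullet$ representing $Rq_*\mathcal{O}_Z$ in the derived category of graded $G_0$-equivariant $A$-modules.
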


Recall that $Z$ is a desingularization of $\Obar$; in particular, the map $q' \colon Z\rightarrow \Obar$ is a birational isomorphism. The next theorem \cite[Thm. 5.1.3]{MR1988690} gives a criterion for the complex $F_\bullet$ to be a free resolution of the coordinate ring of $\Obar$.

\begin{thm}
The following properties hold.
	\begin{enumerate}
	\item The module $q'_* \mathcal{O}_Z$ is the normalization of $\CC [\Obar]$.
	\item If $\mathcal{R}^i q_* \mathcal{O}_Z = 0$ for $i>0$, then $F_\bullet$ is a finite free resolution of the normalization of $\CC [\Obar]$ as an $A$-module.
	\item If $\mathcal{R}^i q_* \mathcal{O}_Z = 0$ for $i>0$ and $F_0 = A$, then $\Obar$ is normal and has rational singularities.
	\end{enumerate}
\end{thm}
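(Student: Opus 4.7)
The plan is to attack the three statements in the order given, since (2) depends on (1) and (3) builds on (2). Throughout, the key geometric facts I will exploit are that $Z$ is smooth (being the total space of a vector bundle over a homogeneous space), that $q' \colon Z \to \overline{\mathcal{O}}$ is birational by the definition of desingularization, and that $q'$ is proper because it factors as a closed immersion $Z \hookrightarrow \mathbb{A}^N \times G_0/P$ followed by projection to $\mathbb{A}^N$, which is proper since $G_0/P$ is projective.

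For part (1), I would argue that $q'_*\mathcal{O}_Z$ is a coherent $\mathcal{O}_{\overline{\mathcal{O}}}$-algebra: coherence follows from properness of $q'$ and finite generation at the level of function fields from birationality, so by a form of Zariski's main theorem the map $\overline{\mathcal{O}}' := \operatorname{Spec}(q'_*\mathcal{O}_Z) \to \overline{\mathcal{O}}$ is finite and birational. Because $Z$ is normal (in fact smooth), $q'_*\mathcal{O}_Z$ is integrally closed in the function field of $\overline{\mathcal{O}}$, which identifies $\overline{\mathcal{O}}'$ with the normalization of $\overline{\mathcal{O}}$.

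For part (2), I would combine the previous theorem with part (1). The basic theorem identifies $H_{-i}(F_\bullet)$ with $\mathcal{R}^i q_*\mathcal{O}_Z$. Under the hypothesis $\mathcal{R}^i q_*\mathcal{O}_Z = 0$ for $i>0$, all negative homology vanishes and $F_\bullet$ becomes a (finite, since $G_0/P$ has finite cohomological dimension) free resolution of $H_0(F_\bullet) = q_*\mathcal{O}_Z$. Noting that $q_*\mathcal{O}_Z$ agrees with $q'_*\mathcal{O}_Z$ after extension by zero along the closed immersion $\overline{\mathcal{O}} \hookrightarrow \mathbb{A}^N$, part (1) finishes the argument.

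For part (3), the hypothesis $F_0 = A$ means the resolution from (2) begins with $A$, so the surjection $F_0 \twoheadrightarrow H_0(F_\bullet)$ expresses the normalization of $\mathbb{C}[\overline{\mathcal{O}}]$ as a quotient of $A$. Factoring this map through the canonical inclusion $\mathbb{C}[\overline{\mathcal{O}}] \hookrightarrow q'_*\mathcal{O}_Z$ and using that $A \twoheadrightarrow \mathbb{C}[\overline{\mathcal{O}}]$ is surjective by definition forces the inclusion to be an equality, hence $\overline{\mathcal{O}}$ is normal. Rational singularities then follow directly from the definition, since $q' \colon Z \to \overline{\mathcal{O}}$ is a desingularization with $\mathcal{R}^i q'_*\mathcal{O}_Z = 0$ for $i > 0$, which is the hypothesis.

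The main obstacle is part (1): making precise the identification of $q'_*\mathcal{O}_Z$ with the normalization of $\mathbb{C}[\overline{\mathcal{O}}]$, since it silently uses Zariski's main theorem (or equivalently, that a finite birational morphism from a normal scheme realizes the normalization). Once this is granted, parts (2) and (3) are essentially formal consequences of the basic theorem and an unwinding of definitions.
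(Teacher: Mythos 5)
The paper does not actually prove this statement: it is quoted from Weyman's book (the citation [Thm.~5.1.3] of \cite{MR1988690}), so there is no internal argument to compare against; your sketch is essentially the standard proof given in that reference, and it is correct in substance. Part (1) indeed rests on exactly the three facts you isolate (properness of $q'$, birationality, normality of $Z$), though the appeal to Zariski's main theorem is unnecessary: properness already gives coherence, hence module-finiteness, of $q'_*\mathcal{O}_Z$ over $\CC[\Obar]$, and then one only needs the elementary fact that a module-finite extension of $\CC[\Obar]$ inside its fraction field which is itself integrally closed must be the integral closure.

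The one step you should make explicit is in part (2). The complex $F_\bullet$ of Theorem 5.1.2 can a priori have nonzero terms $F_i$ in negative homological degrees (these come from $H^j\bigl(G_0/P,\bigwedge^{i+j}\xi\bigr)$ with $j\geq -i$), and the hypothesis $\mathcal{R}^i q_*\mathcal{O}_Z=0$ for $i>0$ only kills the \emph{homology} there, not the terms; a complex with nonzero terms in negative degrees is not literally a free resolution. This is where the minimality of the differentials asserted in Theorem 5.1.2 enters: a bounded minimal complex of graded free $A$-modules that is exact in negative degrees has zero terms there, by Nakayama applied inductively from the bottom term upward. With that observation, your parts (2) and (3) go through as written; in (3) it is worth saying that the augmentation $F_0=A\to H_0(F_\bullet)=q'_*\mathcal{O}_Z$ is, by construction of the complex, the natural restriction-of-functions map $A\to\Gamma(Z,\mathcal{O}_Z)$, which is why it factors through $\CC[\Obar]$ and why normality plus the vanishing of higher direct images yields rational singularities for the desingularization $q'$.
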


The modules $F_i$ are defined in terms of cohomology groups of the bundles $\bigwedge^{i+j} \xi$ on the flag variety $G_0 / P$. These cohomology groups can be computed directly using Bott's algorithm when $\xi$ is semisimple \cite[Thm. 4.1.4]{MR1988690}. When $\xi$ is not semisimple, its structure becomes complicated so working with it directly is more difficult. However it is still possible to calculate the equivariant Euler characteristic of the bundles $\bigwedge^{i+j} \xi$. This was done by Kra\'skiewicz and Weyman in \cite{Kraskiewicz:2012yq}; their work provides the Hilbert functions of the normalization of the orbit closures together with an estimate for the shape of the minimal free resolution of their coordinate rings in terms of $G_0$-modules and equivariant maps.

\section{Computations}\label{sec:computations}
The expected resolution $F_\bullet$ of the coordinate ring of $\Obar$ is constructed using the information provided by the equivariant Euler characteristic. As such it may be missing those syzygies the Euler characteristic was unable to detect due to cancellation; namely all those corresponding to an irreducible representation occurring in neighboring homological degrees and in the same homogeneous degree of the resolution. By constructing $F_\bullet$ explicitly in Macaulay2 (M2) \cite{Grayson:uq}, we can ensure it is the actual resolution for $\Obar$.

In this section we describe the type of computations that were carried out. All computations were run in M2. Although M2 can compute minimal free resolutions directly, this is generally impractical for many of the examples we consider, given the amount of computational resources and time needed for the algorithms to produce any result. Still, for some of the smaller examples, we were able to obtain resolutions directly or using the options {\tt DegreeLimit} and {\tt LengthLimit} to aid the computation.

\subsection{The interactive method for syzygies}\label{sec:interactive}
Let $F_\bullet$ be the expected resolution for $\Obar$ and suppose that some differential $d_i \colon F_i \rightarrow F_{i-1}$ can be written explicitly as a matrix with entries in the polynomial ring $A$. Using M2 and the command {\tt syz}, one can find the first syzygies of $d_i$. Since we expect these to coincide with the differential $d_{i+1}$, we use the expected degree of $d_{i+1}$ as a degree bound with the option {\tt DegreeLimit} to speed up the computation. This procedure can be iterated to recover the tail of the expected resolution:
	\begin{center}
	\begin{tikzpicture}
			\matrix (m) [matrix of math nodes, row sep=20pt, column sep=30pt, text height=1.5ex, text depth=0.25ex, ampersand replacement=\&]
			{\coker d_i \& F_{i-1} \& F_i \& T_\bullet\\};
			\path[<-,font=\scriptsize]
			(m-1-1) edge node[auto] {} (m-1-2)
			(m-1-2) edge node[auto] {$d_i$} (m-1-3)
			(m-1-3) edge node[auto] {} (m-1-4);
	\end{tikzpicture}
	\end{center}
Similarly, transposing $d_i$ and calculating syzygies gives the complex
	\begin{center}
	\begin{tikzpicture}
			\matrix (m) [matrix of math nodes, row sep=20pt, column sep=30pt, text height=1.5ex, text depth=0.25ex, ampersand replacement=\&]
			{H_\bullet \& F_{i-1}^* \& F_i^* \& \coker d_i^*\\};
			\path[->,font=\scriptsize]
			(m-1-1) edge node[auto] {} (m-1-2)
			(m-1-2) edge node[auto] {$d_i^*$} (m-1-3)
			(m-1-3) edge node[auto] {} (m-1-4);
	\end{tikzpicture}
	\end{center}
which can be dualized to obtain the head of the expected resolution. Splicing these two complexes together, we obtain the expected resolution in the form:
	\begin{center}
	\begin{tikzpicture}
			\matrix (m) [matrix of math nodes, row sep=20pt, column sep=30pt, text height=1.5ex, text depth=0.25ex, ampersand replacement=\&]
			{H_\bullet^* \& F_{i-1} \& F_i \& T_\bullet\\};
			\path[<-,font=\scriptsize]
			(m-1-1) edge node[auto] {} (m-1-2)
			(m-1-2) edge node[auto] {$d_i$} (m-1-3)
			(m-1-3) edge node[auto] {} (m-1-4);
	\end{tikzpicture}
	\end{center}

Clearly, taking syzygies with degree bounds and dualizing may cause the resulting complex not to be exact; we prove exactness with the methods described in section \ref{sec:exactness}. We refer to this method of constructing the expected resolution as the \emph{interactive method} for calculating syzygies.

\begin{remark}
In most cases, given the decomposition into irreducible representations of the modules $F_i$, the differentials $d_i$ are uniquely determined by Schur's lemma, up to a choice of scalars. However writing them explicitly is in general quite complicated. When the defining equations, i.e. the differential $d_1$, are not known, our choice of differential $d_i$ to write explicitly falls on those matrices that are easier to describe (e.g. matrices with linear entries).
\end{remark}

\subsection{The cone procedure}\label{cone_procedure}
When the orbit closure $\overline{\mathcal{O}}$ is not normal, the geometric technique returns the expected resolution of the coordinate ring of the normalization $\mathcal{N}(\overline{\mathcal{O}})$, as an $A$-module. We have an exact sequence of $A$-modules:
	\begin{center}
	\begin{tikzpicture}
			\matrix (m) [matrix of math nodes, row sep=20pt, column sep=25pt, text height=1.5ex, text depth=0.25ex, ampersand replacement=\&]
			{0 \& \CC [\overline{\mathcal{O}}] \& \CC [\mathcal{N}(\overline{\mathcal{O}})] \& C \& 0\\};
			\path[->,font=\scriptsize]
			(m-1-1) edge node[auto] {} (m-1-2)
			(m-1-3) edge node[auto] {} (m-1-4)
			(m-1-4) edge node[auto] {} (m-1-5);
			\path[right hook->,font=\scriptsize]
			(m-1-2) edge node[auto] {} (m-1-3);
	\end{tikzpicture}
	\end{center}

Using the interactive method, we recover the expected resolution $F_\bullet$ for $\mathcal{N}(\overline{\mathcal{O}})$; in particular, $d_1 \colon F_1\rightarrow F_0$ is a minimal presentation of $\CC [\mathcal{N}(\overline{\mathcal{O}})]$. As a $G_0$-representation, the module $C$ is always obtained from $\CC [\mathcal{N}(\overline{\mathcal{O}})]$ by removing some irreducible representations. This implies that, in the appropriate bases for $F_0$ and $F_1$, a presentation of $C$ is given by a map of free modules $d'_1 : F'_1 \rightarrow F'_0$ whose matrix is obtained from that of $d_1$ by dropping some rows and columns. This presentation can be used in M2 to construct a resolution $F'_\bullet$ for $C$ (using the interactive method, if necessary). Moreover, the projection $\pi_0 \colon F_0 \rightarrow F'_0$ lifts to a map of complexes $\tilde{\pi} \colon F_\bullet \rightarrow F'_\bullet$; this can be achieved explicitly in M2 with the command {\tt extend} or via a step by step factorization with the command {\tt //}. It is well known that the cone of $\tilde{\pi}$ is a (non necessarily minimal) free resolution of $\CC [\overline{\mathcal{O}}]$; in M2, this is recovered with {\tt cone}. Given the shift in homological degree introduced by the cone, $\CC [\overline{\mathcal{O}}]$ is the degree one homology of the cone of $\tilde{\pi}$. Now $\CC [\overline{\mathcal{O}}]$ can be minimized and resolved directly or using the interactive method. We call this the \emph{cone procedure}.

\begin{remark}
A more efficient version of this technique comes from the realization that, because the homology of the cone of $\tilde{\pi}$ is concentrated in degree one, it is not necessary to construct the whole cone in M2. In fact it is enough to know the resolutions of $\CC [\mathcal{N}(\overline{\mathcal{O}})]$ and $C$ up to homological degree two and use them to construct the part of the cone needed to recover the homology in degree one. We will refer to this as the \emph{truncated cone procedure}.
\end{remark}

\section{Exactness}\label{sec:exactness}
Once the expected resolution has been constructed in M2, we turn to the issue of proving it is exact.

\subsection{The equivariant exactness criterion}\label{exactness_criterion}
Recall the exactness criterion of Buchsbaum and Eisenbud \cite[Thm. 20.9]{MR1322960}:
	\begin{thm}
	Let $A$ be a ring and
	\begin{center}
	\begin{tikzpicture}
			\matrix (m) [matrix of math nodes, row sep=20pt, column sep=22pt, text height=1.5ex, text depth=0.25ex, ampersand replacement=\&]
			{F_\bullet : \& F_0 \& F_1 \& \ldots \& F_{n-1} \& F_n\\};
			\path[<-,font=\scriptsize]
			(m-1-2) edge node[auto] {$d_1$} (m-1-3)
			(m-1-3) edge node[auto] {} (m-1-4)
			(m-1-4) edge node[auto] {} (m-1-5)
			(m-1-5) edge node[auto] {$d_n$} (m-1-6);
	\end{tikzpicture}
	\end{center}
a complex of free $A$-modules.
	$F_\bullet$ is exact if and only if $\forall k = 1, \ldots, n$:
		\begin{enumerate}
		\item $\rank (F_k) = \rank (d_k) + \rank (d_{k+1})$;
		\item $\depth (I (d_k)) \geq k$ where $I (d_k)$ is the ideal of $A$ generated by maximal non vanishing minors of $d_k$.
		\end{enumerate}
	\end{thm}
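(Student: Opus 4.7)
The plan is to prove both directions by induction on the length $n$ of the complex, with localization at primes of $A$ as the main technical tool. The underlying observation is that over a local ring, a bounded exact complex of finitely generated free modules splits into a direct sum of short exact sequences, which themselves split; in particular, the ranks at each spot satisfy the additivity condition.

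For necessity ($\Rightarrow$), suppose $F_\bullet$ is exact. Exactness is preserved under localization, so for any prime $P$ of $A$, $F_\bullet \otimes A_P$ remains exact. Localizing at a minimal prime reduces to the Artinian case where the splitting observation applies, yielding $\rank(F_k) = \rank(d_k) + \rank(d_{k+1})$ generically; since matrix ranks do not drop under passage to overrings, the rank condition holds globally. For the depth bound on $I(d_k)$, I would argue by contradiction: if $\depth I(d_k) < k$, a maximal regular sequence in $I(d_k)$ has length less than $k$, so localizing at an associated prime of the quotient of $A$ by that sequence, the matrix $d_k$ has all its maximal minors in the maximal ideal while the localized complex remains exact. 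A count of ranks at that local ring then contradicts the rank condition.

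For sufficiency ($\Leftarrow$), I would invoke the Acyclicity Lemma of Peskine--Szpiro. The first step is to show, using the rank condition alone, that the homology $H_k(F_\bullet)$ is supported inside $V(I(d_k))$: at any prime $P \not\supseteq I(d_k)$, the map $d_k \otimes A_P$ attains its expected rank, so it splits off an identity block; a downward induction on $k$ then shows that $F_\bullet \otimes A_P$ is exact, hence $(H_k)_P = 0$. Combining this support bound with $\depth I(d_k) \geq k$ yields $\depth(\mathrm{Ann}\,H_k) \geq k$, which is exactly the hypothesis of the Acyclicity Lemma and forces $H_k = 0$ for $k \geq 1$.

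The main obstacle will be the support computation in the sufficiency direction, namely propagating exactness from localizations away from $V(I(d_k))$ uniformly across all $k$. Once one has a clean splitting argument at primes avoiding every $I(d_j)$ for $j \leq k$, the depth condition feeds directly into the Acyclicity Lemma and closes the proof; the main care is in organizing the induction so that the splittings at each step are compatible and the support bound truly lands inside $V(I(d_k))$ rather than a larger variety.
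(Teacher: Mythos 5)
There is nothing in the paper to compare against here: this statement is the classical Buchsbaum--Eisenbud exactness criterion, which the paper quotes without proof from \cite[Thm.~20.9]{MR1322960}. Judged on its own, your plan is the standard textbook route (localize, split off unit minors, feed depth bounds into the Peskine--Szpiro acyclicity lemma), and the sufficiency half is essentially right in outline --- with the caveats that killing $(H_k)_P$ requires $P$ to avoid both $I(d_k)$ and $I(d_{k+1})$ (so the support bound is $V(I(d_k))\cup V(I(d_{k+1}))$, which still has depth $\geq k$ by condition (2)), and that the acyclicity lemma cannot be applied to the whole complex in one stroke, since the free modules only have depth equal to $\depth A_P$, which need not be $\geq i$ for $i$ near $n$; the induction you allude to is where the work lies.

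The genuine gap is in the necessity direction. The step ``since matrix ranks do not drop under passage to overrings, the rank condition holds globally'' fails: $A\rightarrow A_P$ is not injective when $A$ has zerodivisors, so maximal minors can die under localization at a minimal prime and the determinantal rank can drop (e.g.\ the $1\times 1$ matrix $(x)$ over $k[x,y]/(xy)$ has rank $1$, but rank $0$ after localizing at the minimal prime $(x)$). The correct move is to localize at the associated (depth-zero) primes, where $\coker d_1$ has finite projective dimension and Auslander--Buchsbaum forces it, hence every $\im d_k$, to be free, so the localized complex splits; but this only yields $\rank(d_k|_P)+\rank(d_{k+1}|_P)=\rank F_k$ for the \emph{local} ranks. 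To promote this to the global determinantal ranks you must know that the local rank of $d_k$ is the same at every associated prime, which is precisely the statement that $I(d_k)$ avoids all associated primes, i.e.\ $\depth I(d_k)\geq 1$. So conditions (1) and (2) cannot be obtained independently, as your sketch assumes; the standard proofs establish them together (e.g.\ by descending induction on $k$). Similarly, for the depth bound, ``a count of ranks at that local ring then contradicts the rank condition'' is not an argument: at a prime of positive depth less than $k$ the localized complex does not split, so there is no rank count available. What one actually does there is apply Auslander--Buchsbaum to $(\coker d_k)_P$, which has finite projective dimension because the tail of the exact complex resolves it, and use this to produce a unit $\rank(d_k)$-minor at $P$, contradicting $I(d_k)\subseteq P$. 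As it stands, the necessity half of your proposal would not go through without these repairs.
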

The map $d_{n+1}$ is understood to be the zero map.

When $A$ is our polynomial ring and $F_\bullet$ is the expected resolution of an orbit closure $\Obar$, we have a more efficient way of proving exactness relying on the fact that $F_\bullet$ is $G_0$-equivariant. Recall that $G_0$ acts on $\mathbb{A}^N_\CC$, identified with $\gg_1$, with finitely many orbits $\mathcal{O}_0, \ldots, \mathcal{O}_t$; it  follows immediately that the action has a dense orbit, also referred to as the generic orbit. 
Let $p_j$ be a representative of the orbit $\mathcal{O}_j$ and let $p$ be a representative of the generic orbit.

\begin{lemma}\label{lemma:eec}
Let $d:F\rightarrow F'$ be a non zero, $G_0$-equivariant, minimal map of graded free $A$-modules of finite rank.
	\[\depth (I(d)) = \min \{ \codim (\Obar_j) \mid \rank (d|_{p_j}) < \rank (d|_{p}) \}.\]
\end{lemma}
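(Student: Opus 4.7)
The plan is to identify both sides of the asserted equation as the codimension of the same $G_0$-stable closed subvariety of $\mathbb{A}^N$. On one side this uses the Cohen-Macaulayness of the polynomial ring $A$; on the other it uses $G_0$-equivariance together with the finiteness of the orbit decomposition.

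Since $A$ is Cohen-Macaulay, one has $\depth(I(d)) = \height(I(d)) = \codim(V(I(d)))$ for any proper ideal, and minimality of $d$ guarantees $I(d)\subseteq \mathfrak{m}$ so that this applies (and the nonzero hypothesis ensures that $I(d)$ is in fact nonzero). Setting $r := \rank(d|_p)$, which equals the generic rank of $d$ because $p$ lies in the dense orbit, the ideal $I(d)$ is by definition generated by the $r\times r$ minors of a matrix representation of $d$. A point $x\in\mathbb{A}^N$ lies in $V(I(d))$ precisely when all such minors vanish at $x$, that is, when $\rank(d|_x) < r$. Hence
\[V(I(d)) = \{x\in\mathbb{A}^N \mid \rank(d|_x) < r\}.\]

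The key observation is that the pointwise rank of $d$ is constant on $G_0$-orbits. Indeed, because $F$ and $F'$ carry $G_0$-actions compatible with the one on $A$ and $d$ is $G_0$-equivariant, for each $g\in G_0$ the fiber maps $d|_x$ and $d|_{gx}$ differ by linear isomorphisms on source and target induced by $g$, and hence have the same rank. Therefore $V(I(d))$ is $G_0$-stable, and since there are only finitely many orbits it decomposes as
\[V(I(d)) = \bigcup_{j \in J} \Obar_j, \qquad J = \{j \mid \rank(d|_{p_j}) < r\}.\]

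Finally, the codimension of a finite union of closed subvarieties is the minimum of the codimensions of its components, which yields
\[\codim(V(I(d))) = \min_{j \in J} \codim(\Obar_j),\]
and combined with the Cohen-Macaulay reduction of the first step this is exactly the claimed formula. The only step requiring care is the $G_0$-invariance of the pointwise rank; this is where the equivariance hypothesis is essential, but once it is in hand the remainder of the argument is a routine combination of the depth/height identity in a Cohen-Macaulay ring with the elementary fact that codimension of a closed set is the minimum codimension of its irreducible components.
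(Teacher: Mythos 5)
Your proposal is correct and follows essentially the same route as the paper: use the Cohen--Macaulayness of $A$ to identify $\depth(I(d))$ with $\codim(\mathcal{V}(I(d)))$, use $G_0$-equivariance to see that $\mathcal{V}(I(d))$ is a finite union of orbits (equivalently, of orbit closures $\Obar_j$ with $\rank(d|_{p_j})<\rank(d|_p)$), and conclude by taking the minimum of the codimensions. The extra details you supply (minimality making $I(d)$ proper, the pointwise rank being constant on orbits, lower semicontinuity of rank) are just explicit fill-ins of steps the paper leaves implicit.
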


Notice that the minimum on the right hand side is taken over a non empty set, since the rank of $d$ at the origin is zero.

\begin{proof}
Denote $\mathcal{V} (I(d))$ the zero set in $\mathbb{A}^N_\CC$ of the ideal $I(d)$ of $A$. We have
	\[\depth (I(d)) = \depth (\sqrt{I(d)}) = \codim (\mathcal{V} (I(d)))\]
where the first equality holds because radicals preserve depth and the second equality follows from the fact the that the polynomial ring $A$ is Cohen-Macaulay. By assumption, $d$ is $G_0$-equivariant, hence $I(d)$ is $G_0$-equivariant and so is its zero set. Therefore we can write
	\[\mathcal{V} (I(d)) = \bigcup_{\rank (d|_{p_j}) < \rank (d|_{p})} \mathcal{O}_j\]
which is a finite union of $G_0$-orbits. Finally
	\[\codim (\mathcal{V} (I(d))) = \min \{ \codim (\Obar_j) \mid \rank (d|_{p_j}) < \rank (d|_{p}) \}. \]
\end{proof}

\begin{prop}[Equivariant exactness criterion]\label{eec}
Let $F_\bullet$ be a $G_0$-equivariant minimal complex of graded free $A$-modules and assume the differentials $d_i : F_i \rightarrow F_{i-1}$ are non zero. Then $F_\bullet$ is exact if and only if $\forall k = 1, \ldots, n$:
	\begin{enumerate}
	\item\label{cond1} $\rank (F_k) = \rank (d_k|_{p}) + \rank (d_{k+1}|_{p})$;
	\item\label{cond2} $\min \{ \codim (\Obar_j) \mid \rank (d_k|_{p_j}) < \rank (d_k|_{p}) \} \geq k$.
	\end{enumerate}
\end{prop}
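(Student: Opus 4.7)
The plan is to deduce the proposition directly from the Buchsbaum-Eisenbud exactness criterion stated above, using the $G_0$-equivariance essentially through Lemma \ref{lemma:eec}. The entire argument amounts to translating each of the two Buchsbaum-Eisenbud conditions into a statement that can be read off orbit-wise, so the structure of the proof would be: write down the BE criterion, then replace each clause by an equivalent $G_0$-equivariant version.

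First I would handle the rank condition. Since $A = \mathbb{C}[\mathbb{A}^N_\CC]$ is a domain, for any map of free $A$-modules $d$, the rank of $d$ as a module map equals the fiber rank $\rank(d|_x)$ for all $x$ in a dense open subset of $\mathbb{A}^N_\CC$. The dense $G_0$-orbit meets this open set (indeed it \emph{is} dense), so any representative $p$ of the generic orbit satisfies $\rank(d_k) = \rank(d_k|_p)$ and $\rank(d_{k+1}) = \rank(d_{k+1}|_p)$. Substituting into Buchsbaum-Eisenbud's equality $\rank(F_k) = \rank(d_k) + \rank(d_{k+1})$ produces condition \ref{cond1}, with the convention $\rank(d_{n+1}|_p) = 0$ handling the boundary.

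Next I would handle the depth condition, which is where the real content sits. Lemma \ref{lemma:eec}, applied to each non-zero differential $d_k$ (non-vanishing is part of the hypothesis and is exactly what the lemma requires), gives
\[
\depth(I(d_k)) = \min \{ \codim(\Obar_j) \mid \rank(d_k|_{p_j}) < \rank(d_k|_{p}) \}.
\]
Thus Buchsbaum-Eisenbud's inequality $\depth(I(d_k)) \geq k$ becomes condition \ref{cond2} verbatim. Since both translations above are equivalences rather than implications, both directions of the proposition follow at once.

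The main obstacle is not logical but conceptual: identifying the module rank of $d_k$ with its fiber rank at the generic point $p$. Once one observes that the dense orbit forces $p$ to lie in the open locus of maximal fiber rank, the entire proposition reduces to a bookkeeping exercise combining the classical criterion with Lemma \ref{lemma:eec}. No further algebraic or geometric input is needed.
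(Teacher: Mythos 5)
Your proposal is correct and follows essentially the same route as the paper's own proof: condition \ref{cond1} is matched to the Buchsbaum--Eisenbud rank condition via the identification $\rank(d_k) = \rank(d_k|_p)$ at a representative $p$ of the dense orbit, and condition \ref{cond2} is matched to the depth condition via Lemma \ref{lemma:eec}. The only difference is that you spell out the generic-rank identification (maximal fiber rank on a dense open set, constancy along the dense orbit by equivariance) slightly more explicitly than the paper does, which is fine.
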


\begin{proof}
Condition \ref{cond1} is equivalent to the first condition in the criterion of Buchsbaum and Eisenbud. This is because $p$ is a representative of the generic orbit and therefore $\rank (d_k|_p) = \rank (d_k)$.

As for condition \ref{cond2}, this is equivalent to the second condition in the criterion of Buchsbaum and Eisenbud, because, by lemma \ref{lemma:eec}, we have
	\[\depth (I(d_k)) = \min \{ \codim (\Obar_j) \mid \rank (d_k|_{p_j}) < \rank (d_k|_{p}) \} \geq k.\]
\end{proof}

\subsection{Dual complexes and Cohen-Macaulay orbits}

A useful feature of the equivariant exactness criterion is that it easily allows to check if the dual complex $F_\bullet^*$ is exact, providing us with the following.

\begin{coro}
Under the hypotheses of proposition \ref{eec}, suppose $\exists j_0 \in \{0,\ldots,t\}$ such that whenever $\rank (d_k|_{p_j}) < \rank (d_k|_{p})$, for some $k$ and $j$, we have:
	\begin{itemize}
	\item $\rank (d_k|_{p_{j_0}}) < \rank (d_k|_{p})$;
	\item $\codim (\Obar_{j_0}) \leq \codim (\Obar_j)$.
	\end{itemize}
Then $F_\bullet$ is exact if and only if $F^*_\bullet$ is exact.
\end{coro}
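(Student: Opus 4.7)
The plan is to apply the Equivariant Exactness Criterion (Proposition~\ref{eec}) to both $F_\bullet$ and to $F^*_\bullet$ (viewed as a chain complex via the reindexing $G_k := F_{n-k}^*$, $\partial_k := d_{n-k+1}^*$), and to verify that the hypothesis on $j_0$ makes the two resulting systems of conditions coincide. I start from the elementary fact that transposition preserves the rank of a matrix, so $\rank(d_k|_{p_j}) = \rank(d_k^*|_{p_j})$ at every orbit representative $p_j$ and $\rank(d_k|_p) = \rank(d_k^*|_p)$ at the generic point. Consequently the rank-dropping set
\[B_k := \{j \in \{0, \ldots, t\} : \rank(d_k|_{p_j}) < \rank(d_k|_p)\}\]
is unchanged under dualization. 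Since each $d_k$ is non-zero and the rank at the origin vanishes, the orbit of the origin lies in every $B_k$, so $B_k$ is always non-empty.

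Applying the hypothesis, for every $k$ we have $j_0 \in B_k$ and $\codim(\Obar_{j_0}) \leq \codim(\Obar_j)$ for all $j \in B_k$. Hence
\[\min\{\codim(\Obar_j) : j \in B_k\} = \codim(\Obar_{j_0})\]
independently of $k$, and condition~\ref{cond2} of the EEC, for both $F_\bullet$ and $G_\bullet$, collapses to the same inequality $\codim(\Obar_{j_0}) \geq k$ for $k = 1, \ldots, n$. The two instances of condition~\ref{cond2} are therefore identical.

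For condition~\ref{cond1}, the defining rank equality for $G_\bullet$ at index $k$ is $\rank(F_{n-k}) = \rank(d_{n-k+1}|_p) + \rank(d_{n-k}|_p)$, which under the substitution $\ell = n-k$ coincides with the rank equality of condition~\ref{cond1} for $F_\bullet$ at index $\ell$. The two systems therefore agree on the overlap $\ell = 1, \ldots, n-1$.

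The step I expect to require care is the boundary: condition~\ref{cond1} has an extra equation for $F_\bullet$ at $\ell = n$, namely $\rank(F_n) = \rank(d_n|_p)$, and a different one for $G_\bullet$ at $\ell = 0$, namely $\rank(F_0) = \rank(d_1|_p)$. These are a priori distinct, and this is the main obstacle. The resolution is that in our applications $F_\bullet$ resolves the coordinate ring of a proper subvariety of $\mathbb{A}^N_\CC$, a torsion $A$-module whose Euler characteristic vanishes. Combining $\sum_\ell (-1)^\ell \rank(F_\ell) = 0$ with the middle rank equalities yields the relation $\rank(F_0) - \rank(d_1|_p) = (-1)^{n+1}\bigl(\rank(F_n) - \rank(d_n|_p)\bigr)$, making the two boundary equations equivalent and completing the equivalence of the EEC conditions for $F_\bullet$ and $F^*_\bullet$.
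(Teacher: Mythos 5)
Your argument follows the paper's proof in its two main points: ranks of minors are unchanged by transposition, so the rank-drop sets $B_k$ are the same for $d_k$ and $d_k^*$, and the hypothesis on $j_0$ makes the bound in condition~\ref{cond2} equal to the $k$-independent quantity $\codim(\Obar_{j_0})$, so the depth conditions for $F_\bullet$ and $F_\bullet^*$ hold simultaneously. Where you genuinely diverge is condition~\ref{cond1}: the paper simply declares it ``trivially satisfied,'' while you correctly observe that after reindexing the dual complex the rank equalities match only for $\ell=1,\ldots,n-1$ and differ at the two ends ($\rank F_n=\rank(d_n|_p)$ for $F_\bullet$ versus $\rank F_0=\rank(d_1|_p)$ for $F_\bullet^*$), and you close this gap with the telescoping identity coming from $\sum_\ell(-1)^\ell\rank F_\ell=0$; the computation is correct, and since both boundary defects are nonnegative it even forces both to vanish when $n$ is even. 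The price is that vanishing Euler characteristic (equivalently, $\coker d_1$ torsion) is an assumption not present in the corollary's literal hypotheses, so strictly speaking you have proved a slightly different statement. That said, your extra care is warranted rather than a detour: without some such assumption the boundary conditions really are inequivalent --- for instance, in the setting of the case $(G_2,\alpha_1)$ the equivariant complex $E\otimes A \leftarrow A(-1)$ given by the trace element, over $A=\Sym(E^*)$, satisfies all the stated hypotheses and is exact, yet its dual is not, since the rank-one cokernel admits a nonzero map to $A$ --- and in every application in the paper the assumption does hold, because $F_\bullet$ always resolves a module supported on a proper subvariety (a coordinate ring, its normalization, or a cokernel $C$). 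So your proof is correct modulo making that hypothesis explicit, and it is in fact more scrupulous at exactly the point the paper's proof glosses over.
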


\begin{remark}
The conditions in the corollary can be simply restated by saying that the ranks of the differentials $d_k$ drop simultaneously at the same orbit $\Obar_{j_0}$.
\end{remark}

\begin{proof}
First observe that $\rank (d^*_k|_{p_j}) = \rank (d_k|_{p_j})$. Now apply the equivariant exactness criterion. Condition \ref{cond1} is trivially satisfied. Condition \ref{cond2} together with the hypothesis implies
	\[\codim (\Obar_{j_0})=\min \{ \codim (\Obar_j) \mid \rank (d_k|_{p_j}) < \rank (d_k|_{p}) \} \geq k\]
for all $k$. Since the left hand side is independent of $k$, condition \ref{cond2} must be satisfied for $F_\bullet$ and $F^*_\bullet$ at the same time.
\end{proof}

The above corollary implies that when $F_\bullet$ is exact and the ranks of the differentials $d_k$ drop simultaneously at the same orbit,  $F_\bullet$ resolves a perfect module. In particular, if $F_\bullet$ resolves the coordinate ring of some orbit closure $\Obar$, then $\Obar$ is Cohen-Macaulay.

\begin{remark}
As detailed in the discussion of the interactive method \ref{sec:interactive}, the head $H_\bullet$ of the expected resolution is obtained by transposing the matrix of a differential $d_i$, resolving its cokernel and then dualizing back. It was noted then that taking duals could affect exactness. However, it follows immediately from our previous corollary, that if $\Obar$ is Cohen-Macaulay both $H_\bullet$ and $H_\bullet^*$ are exact.
\end{remark}

\section{Equations of the orbit closures}\label{sec:equations}
Once we establish that the expected resolution $F_\bullet$ for an orbit closure $\Obar$ is exact, the entries  of the first differential $d_1 \colon F_1 \rightarrow F_0$ generate an ideal $I$ of $A$ whose zero set $\mathcal{V} (I)$ is precisely $\Obar$. In this section we address the issue of determining if $I$ is a radical ideal. We also describe a simple criterion to establish inclusion and singularity of the orbit closures.

\subsection{The coordinate ring of an orbit closure}\label{reducedness}
The generators of the ideal $I$ provide equations for the variety $\Obar$. There is no guarantee however that $I$ is radical; equivalently, $A/I$ need not be reduced, hence it may not be the coordinate ring of $\Obar$.

We outline here the method used to show that the ring $R=A/I$ is indeed reduced. The proof relies on the following characterization of reduced rings \cite[Ex. 11.10]{MR1322960} which is an analogue of Serre's criterion for normality.

\begin{prop}\label{Serre}
A Noetherian ring $R$ is reduced if and only if it satisfies:
	\begin{list}{}{}
	\item[$(R_0)$:] the localization of $R$ at each prime of height 0 is regular;
	\item[$(S_1)$:] all primes associated to zero have height 0.
	\end{list}
\end{prop}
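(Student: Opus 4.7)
The plan is to prove both implications of this Serre-type criterion by combining primary decomposition with the structure theory of zero-dimensional regular local rings.

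For the direction $(\Rightarrow)$, assume $R$ is reduced. I would first note that when $(0)$ is a radical ideal it admits a primary decomposition $(0) = \bigcap_i \mathfrak{p}_i$ whose components are already prime, so the associated primes of $(0)$ coincide with the minimal primes of $R$; in particular all of them have height $0$, giving $(S_1)$. For $(R_0)$, I would localize at a minimal prime $\mathfrak{p}$: every element of $\mathfrak{p} R_\mathfrak{p}$ becomes nilpotent since $\mathfrak{p}$ was minimal, and $R_\mathfrak{p}$ inherits reducedness from $R$, so $\mathfrak{p} R_\mathfrak{p} = 0$. Thus $R_\mathfrak{p}$ is a field and in particular regular.

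For the direction $(\Leftarrow)$, assume $(R_0)$ and $(S_1)$. By $(S_1)$ every associated prime $\mathfrak{p}$ of $(0)$ has height $0$, hence is minimal, so $R_\mathfrak{p}$ has Krull dimension $0$. By $(R_0)$ it is a regular local ring, and a regular local ring of dimension zero is a field, hence reduced. To transfer reducedness back to $R$ itself I would use the canonical embedding $R \hookrightarrow \prod_{\mathfrak{p} \in \mathrm{Ass}(R)} R_\mathfrak{p}$: since $R$ sits inside a product of reduced rings, it is reduced.

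The one step that requires more than routine bookkeeping is establishing that embedding, and this is where I expect the main obstacle. It rests on the Noetherian identity between the set of zero divisors of $R$ and $\bigcup_{\mathfrak{p} \in \mathrm{Ass}(R)} \mathfrak{p}$: if $r \in R$ maps to zero in every $R_\mathfrak{p}$ then, for each such $\mathfrak{p}$, the annihilator $\mathrm{Ann}(r)$ meets the complement of $\mathfrak{p}$, so by prime avoidance $\mathrm{Ann}(r)$ contains a non-zero-divisor, forcing $r=0$. Once this is in hand, the rest of the argument is formal, since associated primes collapse to minimal primes under $(S_1)$ and zero-dimensional regular local rings are fields under $(R_0)$.
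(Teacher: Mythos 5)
Your proof is correct and complete: both directions are handled by the standard route (reduced $\Rightarrow$ $\mathrm{Ass}(R)=\mathrm{Min}(R)$ and $R_{\mathfrak{p}}$ a field at minimal primes; conversely, the embedding $R\hookrightarrow\prod_{\mathfrak{p}\in\mathrm{Ass}(R)}R_{\mathfrak{p}}$ into a product of fields, with injectivity justified via prime avoidance and the identification of the zero divisors with the union of associated primes). The paper offers no proof of its own here---it quotes the statement from Eisenbud, Ex.\ 11.10---and your argument is exactly the standard one that reference has in mind, so there is nothing further to reconcile.
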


We address the condition $(S_1)$ first. In our case, $\Obar$ is always irreducible since it is an orbit closure for an irreducible group $G_0$. This implies that the ideal $I$ has a unique minimal prime, namely $\sqrt{I}$. Therefore the condition $(S_1)$ is equivalent to $\sqrt{I}$ being the only associated prime of $I$. In particular, $I$ will have no embedded primes.

To prove that $\sqrt{I}$ is the only associated prime of $I$, we adapt a result from \cite[Cor. 20.14]{MR1322960}.

\begin{lemma}\label{ass_primes_free_res}
Let
\begin{center}
	\begin{tikzpicture}
			\matrix (m) [matrix of math nodes, row sep=20pt, column sep=22pt, text height=1.5ex, text depth=0.25ex, ampersand replacement=\&]
			{F_\bullet : \& F_0 \& F_1 \& \ldots \& F_{n-1} \& F_n \& 0\\};
			\path[<-,font=\scriptsize]
			(m-1-2) edge node[auto] {$d_1$} (m-1-3)
			(m-1-3) edge node[auto] {} (m-1-4)
			(m-1-4) edge node[auto] {} (m-1-5)
			(m-1-5) edge node[auto] {$d_n$} (m-1-6)
			(m-1-6) edge node[auto] {} (m-1-7);
	\end{tikzpicture}
	\end{center}
be a free resolution of $R=A/I$ as an $A$-module and suppose $\depth_A (I)=c$. We have $\depth_{A_\mathfrak{p}} (\mathfrak{p} A_\mathfrak{p}) = c$ for all associated primes $\mathfrak{p}$ of $I$ if and only if $\depth (I(d_k)) > k$ for all $k>c$, where $I (d_k)$ is the ideal of $A$ generated by the maximal non vanishing minors of $d_k$.
\end{lemma}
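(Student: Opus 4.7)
The plan is to translate the condition on associated primes into one about projective dimensions at each localization, and then read these off from the ideals $I(d_k)$ via the Buchsbaum--Eisenbud acyclicity criterion.

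First I would observe that since $A$ is a polynomial ring (hence Cohen--Macaulay), for any prime $\mathfrak{p}$ one has $\depth_{A_\mathfrak{p}}(\mathfrak{p} A_\mathfrak{p}) = \depth(A_\mathfrak{p}) = \height(\mathfrak{p})$, and likewise $\depth_A(I) = \height(I) = c$. Thus the left-hand side of the equivalence becomes the requirement that every associated prime of $I$ has height exactly $c$; since every associated prime contains $I$, such a prime automatically has height at least $c$, so the real content is that no associated prime has height strictly larger than $c$.

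Next, for each prime $\mathfrak{p}$ of $A$, Auslander--Buchsbaum gives $\operatorname{pd}_{A_\mathfrak{p}} R_\mathfrak{p} + \depth_{A_\mathfrak{p}} R_\mathfrak{p} = \height(\mathfrak{p})$, so $\mathfrak{p}$ is associated to $I$ exactly when $\operatorname{pd}_{A_\mathfrak{p}} R_\mathfrak{p} = \height(\mathfrak{p})$. I would then invoke the local formula $\operatorname{pd}_{A_\mathfrak{p}} R_\mathfrak{p} = \max\{k \geq 0 : I(d_k) \subset \mathfrak{p}\}$, obtained by localizing $F_\bullet$ at $\mathfrak{p}$ and minimizing: whenever $I(d_k) \not\subset \mathfrak{p}$ some maximal minor of $d_k$ becomes a unit in $A_\mathfrak{p}$, so a trivial block $A_\mathfrak{p}^{r_k} \xrightarrow{\id} A_\mathfrak{p}^{r_k}$ splits off and the length of the localized resolution drops by one at position $k$.

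With these ingredients the two directions are mechanical. For ``$\Leftarrow$'', if $\mathfrak{p}$ is associated with $\height(\mathfrak{p}) = k > c$, the local formula forces $I(d_k) \subset \mathfrak{p}$, whence $\depth I(d_k) \leq \height(\mathfrak{p}) = k$, contradicting the hypothesis. For ``$\Rightarrow$'', recall that exactness of $F_\bullet$ already gives $\depth I(d_k) \geq k$ by the acyclicity criterion, so the only failure mode is an equality $\depth I(d_k) = k$ for some $k > c$; a minimal prime $\mathfrak{p}$ of $I(d_k)$ of height $k$ would then satisfy $\operatorname{pd}_{A_\mathfrak{p}} R_\mathfrak{p} \geq k$ by the local formula and $\leq \height(\mathfrak{p}) = k$ by Auslander--Buchsbaum, making $\mathfrak{p}$ an associated prime of height $k > c$.

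The main obstacle is justifying the local formula $\operatorname{pd}_{A_\mathfrak{p}} R_\mathfrak{p} = \max\{k : I(d_k) \subset \mathfrak{p}\}$ cleanly; everything else reduces to a short Auslander--Buchsbaum calculation and the observation that $A$ is Cohen--Macaulay. This formula is, however, essentially the content of Eisenbud's Corollary 20.14, from which the lemma is explicitly adapted, so the real work amounts to citing that corollary and unwinding the definitions.
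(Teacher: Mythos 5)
Your proof is correct and follows essentially the same route as the paper: the paper states this lemma without proof, presenting it as an adaptation of Eisenbud's Corollary 20.14, and your argument is precisely the standard unwinding of that corollary — the local formula for the projective dimension of $R_\mathfrak{p}$ over $A_\mathfrak{p}$ in terms of which ideals $I(d_k)$ lie in $\mathfrak{p}$ — combined with Auslander--Buchsbaum and the Cohen--Macaulayness of $A$ to translate the associated-prime condition into the bound $\depth(I(d_k)) > k$ for $k > c$. Both directions check out, including the use of the Buchsbaum--Eisenbud inequality $\depth(I(d_k)) \geq k$ (from exactness of $F_\bullet$) in the forward direction.
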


The following result provides a simple test for the condition $(S_1)$ in all cases we consider.

\begin{prop}\label{S1_test}
Let $I$ be an ideal in the polynomial ring $A$ such that the zero locus $\mathcal{V} (I)$ of $I$ is irreducible of codimension $c$. Let
\begin{center}
	\begin{tikzpicture}
			\matrix (m) [matrix of math nodes, row sep=20pt, column sep=22pt, text height=1.5ex, text depth=0.25ex, ampersand replacement=\&]
			{F_\bullet : \& F_0 \& F_1 \& \ldots \& F_{n-1} \& F_n \& 0\\};
			\path[<-,font=\scriptsize]
			(m-1-2) edge node[auto] {$d_1$} (m-1-3)
			(m-1-3) edge node[auto] {} (m-1-4)
			(m-1-4) edge node[auto] {} (m-1-5)
			(m-1-5) edge node[auto] {$d_n$} (m-1-6)
			(m-1-6) edge node[auto] {} (m-1-7);
	\end{tikzpicture}
	\end{center}
be a free resolution of $R=A/I$ as an $A$-module. If $\depth (I(d_k)) > k$ for all $k>c$, then $\sqrt{I}$ is the only associated prime of $I$ and $R$ satisfies $(S_1)$.
\end{prop}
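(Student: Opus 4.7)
The plan is to apply Lemma \ref{ass_primes_free_res} directly and then translate its depth conclusion into a height statement via the Cohen-Macaulay property of $A$.

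First, I would pin down $\depth_A(I)$. Since $\mathcal{V}(I)$ is irreducible of codimension $c$, the radical $\sqrt{I}$ is the unique minimal prime of $I$, with $\height(\sqrt{I}) = c$, so $\height(I) = c$. Because $A$ is Cohen-Macaulay, height and grade coincide for ideals, giving $\depth_A(I) = c$. This lets me invoke Lemma \ref{ass_primes_free_res} with exactly the hypothesis assumed in the proposition, namely $\depth(I(d_k)) > k$ for all $k > c$. The conclusion is that $\depth_{A_\mathfrak{p}}(\mathfrak{p} A_\mathfrak{p}) = c$ for every associated prime $\mathfrak{p}$ of $I$.

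Next, I would convert this depth statement into a statement about height. For each associated prime $\mathfrak{p}$, the local ring $A_\mathfrak{p}$ is Cohen-Macaulay (as a localization of a polynomial ring), so
\[
\depth_{A_\mathfrak{p}}(\mathfrak{p} A_\mathfrak{p}) \;=\; \dim(A_\mathfrak{p}) \;=\; \height(\mathfrak{p}).
\]
Hence every associated prime of $I$ has height exactly $c$. Since every associated prime contains a minimal prime of $I$, and the only such minimal prime is $\sqrt{I}$ (itself of height $c$), any associated prime of height $c$ containing $\sqrt{I}$ must coincide with $\sqrt{I}$. This gives that $\sqrt{I}$ is the only associated prime of $I$, which is precisely the condition $(S_1)$ in Proposition \ref{Serre}.

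There is no serious obstacle: the argument is essentially a repackaging of Lemma \ref{ass_primes_free_res} together with the irreducibility of $\mathcal{V}(I)$. The only delicate point is being careful with the two different flavors of ``depth'' — the grade of the ideal $I$ in $A$ versus the local depth of $\mathfrak{p} A_\mathfrak{p}$ in $A_\mathfrak{p}$ — and invoking the Cohen-Macaulay hypothesis on $A$ in both places to identify these with heights.
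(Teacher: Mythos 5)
Your proposal is correct and follows essentially the same route as the paper: establish $\depth_A(I)=c$ from irreducibility and the Cohen--Macaulay property of $A$, invoke Lemma \ref{ass_primes_free_res} to get $\depth_{A_\mathfrak{p}}(\mathfrak{p}A_\mathfrak{p})=c$ for every associated prime, and conclude each such $\mathfrak{p}$ has height $c$ and hence equals $\sqrt{I}$. The only (harmless) difference is cosmetic: you convert local depth to height via Cohen--Macaulayness of $A_\mathfrak{p}$ directly, while the paper runs the chain $c=\depth_A(\sqrt{I})\leq\depth_A(\mathfrak{p})\leq\depth_{A_\mathfrak{p}}(\mathfrak{p}A_\mathfrak{p})=c$ and then uses Cohen--Macaulayness of $A$.
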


\begin{proof}
First observe that
\[c=\codim (\mathcal{V} (I)) = \depth_A (\sqrt{I}) = \depth_A (I),\]
because the ring $A$ is \CM{} and taking radicals preserve depth.

Now let $\mathfrak{p}$ be an associated prime of $I$. By lemma \ref{ass_primes_free_res}, we have $\depth_{A_\mathfrak{p}} (\mathfrak{p} A_\mathfrak{p}) = c$. Since $\sqrt{I}$ is the unique minimal prime of $I$, the inclusion $\sqrt{I} \subseteq \mathfrak{p}$ holds. We deduce that
\[c = \depth_A (\sqrt{I}) \leq \depth_A (\mathfrak{p}) \leq \depth_{A_\mathfrak{p}} (\mathfrak{p} A_{\mathfrak{p}}) = c,\]
where the last inequality holds because passing to the localization preserves regular sequences. Since $A$ is \CM{}, it follows that
\[\height (\mathfrak{p}) = \depth_A (\mathfrak{p}) = c.\]
But $\sqrt{I} \subseteq \mathfrak{p}$ and both ideals are primes of height $c$. Therefore $\mathfrak{p} = \sqrt{I}$.
\end{proof}

\begin{remark}
The test given in \ref{S1_test} is quite easy to apply in practice. The (co)dimension of each orbit closure is known a priori. Moreover, as observed in the proof of proposition \ref{eec}, 
\[\depth (I(d_k)) = \min \{ \codim (\Obar_j) \mid \rank (d_k|_{p_j}) < \rank (d_k|_{p}) \},\]
where $p_j$ is a representative of the orbit $\OO_j$ and $p$ is a representative of the dense orbit.
\end{remark}

We now turn to the condition $(R_0)$ in \ref{Serre}.

\begin{prop}\label{reducedness_criterion}
Let $I$ be an ideal in the polynomial ring $A$ with zero locus $\mathcal{V} (I)$ of  codimension $c$. Assume also that $\mathcal{V} (I)$ is irreducible and that $\sqrt{I}$ is the only associated prime of $I$. Denote $J$ the Jacobian matrix for a set of generators of $I$. If there exists a point $x\in\mathcal{V}(I)$ such that $\rank (J|_x) = c$, then the ring $R=A/I$ satisfies the condition $(R_0)$ and is therefore reduced.
\end{prop}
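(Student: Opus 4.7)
The plan is to apply Serre's criterion (Proposition \ref{Serre}). Condition $(S_1)$ is immediate from the hypothesis that $\sqrt{I}$ is the only associated prime of $I$: since $\mathcal{V}(I)$ is irreducible, $\sqrt{I}$ is the unique minimal prime of $I$, no embedded primes appear, and every associated prime of $(0)$ in $R = A/I$ is the image of $\sqrt{I}$, which has height $0$. So the task reduces to verifying $(R_0)$, namely that the localization of $R$ at its unique minimal prime $\mathfrak{p} = \sqrt{I}/I$ is regular.

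For $(R_0)$, I would first apply the Jacobian criterion for regularity over a perfect field (for instance \cite[Thm.~16.19]{MR1322960}) at the maximal ideal $\mathfrak{m}_x$ corresponding to the $\mathbb{C}$-point $x$. Because $\sqrt{I}$ is the unique minimal prime of $I$ and has height $c$, the ideal $I\cdot A_{\mathfrak{m}_x}$ also has height $c$. Hence the assumption $\rank(J|_x) = c$ forces $R_{\mathfrak{m}_x/I} = A_{\mathfrak{m}_x}/I\cdot A_{\mathfrak{m}_x}$ to be a regular local ring, and in particular an integral domain.

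The final step is to transfer regularity from this smooth closed point to the generic point of $\mathcal{V}(I)$. Since $\sqrt{I} \subseteq \mathfrak{m}_x$ and $\mathfrak{p}$ is the unique minimal prime of $R$, the image of $\mathfrak{p}$ in the regular local domain $R_{\mathfrak{m}_x/I}$ must be its unique minimal prime, namely the zero ideal. Further localizing at $\mathfrak{p}$ therefore produces the fraction field of $R_{\mathfrak{m}_x/I}$, which is trivially a regular local ring of dimension $0$. Hence $R_\mathfrak{p}$ is regular, $(R_0)$ holds, and $R$ is reduced by Proposition \ref{Serre}.

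The only delicate point is the application of the Jacobian criterion to a possibly non-radical ideal: one must confirm that $\height(I\cdot A_{\mathfrak{m}_x}) = c$ rather than something larger, which is exactly where the irreducibility of $\mathcal{V}(I)$ and the absence of embedded primes intervene. The rest of the argument is formal.
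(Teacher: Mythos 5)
Your proof is correct and takes essentially the same route as the paper: check $(S_1)$ from the hypothesis on associated primes, apply the Jacobian criterion at the maximal ideal $\mathfrak{m}$ of the smooth point $x$ to conclude $R_{\overline{\mathfrak{m}}}$ is regular, and then pass to the generic point by localizing at $\overline{\mathfrak{p}}=\sqrt{I}/I$. The only (harmless) variation is the last step—where the paper cites the general fact that a localization of a regular local ring at any prime is regular, you instead use that the regular local ring $R_{\overline{\mathfrak{m}}}$ is a domain, so $\overline{\mathfrak{p}}$ becomes the zero ideal there and the further localization is just its fraction field; your remark that $\height(I A_{\mathfrak{m}})=c$ is needed for the Jacobian criterion is also correct and is implicit in the paper's proof.
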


\begin{proof}
Let $\mathfrak{m}$ be the maximal ideal of $A$ corresponding to the point $x\in\mathcal{V} (I)$ and let $\mathfrak{p} = \sqrt{I}$. We denote $\overline{\mathfrak{m}}$ and  $\overline{\mathfrak{p}}$ respectively, the images of $\mathfrak{m}$ and  $\mathfrak{p}$ under the canonical projection $A \rightarrow A/I=R$. Clearly $R$ satisfies $(S_1)$, given the hypothesis on the associated primes of $I$. To prove $R$ satisfies $(R_0)$ we must show that the local ring $R_{\overline{\mathfrak{p}}}$ is regular, since $\overline{\mathfrak{p}}$ is the only height 0 prime in $R$.

First we observe that the ring $R_{\overline{\mathfrak{m}}}$ is regular by the Jacobian criterion \cite[Th. 16.19]{MR1322960}. By the transitivity of localization
	\[R_{\overline{\mathfrak{p}}} \cong (R_{\overline{\mathfrak{m}}})_{\overline{\mathfrak{p}} R_{\overline{\mathfrak{m}}}}.\]
The localization of a regular local ring at a prime ideal is regular. Hence $R_{\overline{\mathfrak{p}}}$ is regular.
\end{proof}

\begin{remark}
In practice, for an orbit closure $\Obar = \mathcal{V} (I)$, we will check that the rank of the Jacobian matrix of $I$ is equal to $\codim (\Obar)$ at a representative $x$ of $\OO$. Indeed the point $x$ is smooth in $\Obar$. This is because the singular locus of $\Obar$ is equivariant; therefore it must be an orbit closure of codimension at least one in $\Obar$ and cannot contain $x$.
\end{remark}

\begin{remark}
A \CM{} ring $R$ is generically reduced (i.e. its localization at each minimal prime is reduced) if and only if it is reduced \cite[Ex. 18.9]{MR1322960}. Notice how generically reduced is precisely the condition $(R_0)$.
In fact, if our ring $R=A/I$ is \CM{}, then it automatically satisfies $(S_1)$. Therefore it is enough to check $(R_0)$ as outlined above.
\end{remark}

\subsection{Inclusions and singular loci of orbit closures}

One problem that can be answered easily once we have equations for the orbit closures is  to determine how they sit one inside the other. To be more precise, we introduce the degeneration partial order by setting $\mathcal{O}_i \leq \mathcal{O}_j$ if and only if $\mathcal{O}_i \subseteq \Obar_j$. Because we have finitely many orbits, the entire picture can be described by checking if the equations of $\Obar_j$ vanish at the representative $p_i$ of $\mathcal{O}_i$. This can be achieved conveniently in M2.

Next we can use the equations of an orbit closure to construct a Jacobian matrix and apply the Jacobian criterion to determine the singular locus of the orbit closure \cite[p. 31]{MR0463157}. Once again, it is enough to evaluate the rank of the Jacobian matrix at finitely many points, the points being representatives of the orbits.


The information on containment and singularity of the orbit closures is presented in a table with rows labeled by the orbits and columns labeled by their closures. The cell corresponding to a row $\OO_i$ and a column $\Obar_j$ can be empty, meaning the points of the orbit $\OO_i$ are not contained in the orbit closure $\Obar_j$, or it can contain the letters `ns' (respectively `s') to indicate that the points of $\OO_i$ are non singular (respectively singular) in the orbit closure $\Obar_j$.

\subsection{The degenerate orbits}
Let $(X_n, x)$ be a Dynkin diagram with a distinguished node. The Lie algebra $\gg$ of type $X_n$ has a grading
\[ \gg = \bigoplus_{i\in \ZZ} \gg_i\]
induced by the choice of the distinguished node $x$, as described in the introduction. Let $\OO$ be an orbit for the action $G_0 \times \CC^\times \acts \gg_1$. Suppose there is a node $y\neq x$ in $X_n$ such that $\OO \cap \gg'_1 \neq 0$, where $\gg'$ is the graded subalgebra of $\gg$ corresponding to the subdiagram $(X_n \setminus \{x\}, y)$. Then we say the orbit $\OO$ is degenerate. This means that $\OO$ comes from an orbit $\OO'$ that occurs in the case of a smaller Dynkin diagram. The paper of Kra\'skiewicz and Weyman \cite{Kraskiewicz:2012yq} describes a method to obtain the free resolution for the coordinate ring of $\Obar$ by reducing to the case of $\OO'$.

In practice, many of the orbit closures of degenerate orbits that we encountered are well known varieties. When possible, we describe the defining equations directly. For some cases, we can obtain the defining ideal by polarizing the equations for the closure of the smaller orbit with respect to the inclusion $\gg'_1 \hookrightarrow \gg_1$, and adding (if they are not already contained in the ideal) the equations of the so-called generic degenerate orbit. The latter is an orbit closure in $\gg_1$ arising from an orbit closure in $\gg'_1$ whose closure is all of $\gg'_1$; it will be indicated on a case by case basis, when needed.
\section{Representations of type $E_6$}\label{type_E6}
In this section, we will analyze the cases corresponding to gradings on the simple Lie algebra of type $E_6$. Each case corresponds to the choice of a distinguished node on the Dynkin diagram for $E_6$. The nodes are numbered according to the conventions in Bourbaki \cite{MR0240238}.
\begin{center}
\begin{tikzpicture}[scale=0.8]
	\singleline{(0,0)}
	\singleline{(2,0)}
	\singleline{(4,0)}
	\singleline{(6,0)}
	\singleverticalline{(4,0)}
	\rootnode{(0,0)}{$\alpha_1$}
	\rootnode[right]{(4,-2)}{$\alpha_2$}
	\rootnode{(2,0)}{$\alpha_3$}
	\rootnode{(4,0)}{$\alpha_4$}
	\rootnode{(6,0)}{$\alpha_5$}
	\rootnode{(8,0)}{$\alpha_6$}
\end{tikzpicture}
\end{center}

Given the symmetry in the diagram, it is enough to consider the cases with $\alpha_1, \alpha_2, \alpha_3, \alpha_4$ as the distinguished nodes. The case $(E_6,\alpha_5)$ is equivalent to $(E_6,\alpha_3)$, while $(E_6,\alpha_6)$ is equivalent to $(E_6,\alpha_1)$.

\subsection{The case $(E_{6}, \alpha_1)$}

The representation is $V(\omega_4, D_5)$, the half spinor representation for the group $\Spin (10)$. It has dimension 16 and weight vectors of the form $(\lambda_1, \ldots, \lambda_5)$, where $\lambda_i = \pm \frac{1}{2}$, with an even number of negative coordinates. Each weight is labeled by $[I]$, where
\[I=\left\{ i \in \{1,2,3,4,5\} \big| \lambda_i = -\tfrac{1}{2} \right\}.\]
The corresponding polynomial ring is
\[A=\CC [x_\varnothing, x_{ab}, x_{ijkl} | 1 \leq a < b \leq 5, 1 \leq i < j < k < l \leq 5].\]
In characteristic zero, the representation has the following orbits, listed along with the dimension of the closure and a representative:
\begin{center}
\begin{tabular}{ccc}
orbit & dimension & representative\\ \hline
$\mathcal{O}_0$ & 0& 0\\
$\mathcal{O}_1$ & 11& $x_\varnothing =1$\\
$\mathcal{O}_2$ & 16& $x_\varnothing = x_{1234} = 1$\\
\end{tabular}
\end{center}
All the orbit closures are normal, Cohen-Macaulay and have rational singularities. Here is the containment and singularity table:
\begin{center}
\begin{tabular}{c|c|c|c|}
& $\Obar_0$ & $\Obar_1$ & $\Obar_2$\\ \hline \hline
$\OO_0$ & ns & s & ns\\ \hline
$\OO_1$ & & ns & ns\\ \hline
$\OO_2$ & & & ns\\ \hline
\end{tabular}
\end{center}

\subsubsection{The orbit $\OO_1$}

The variety $\Obar_1$ is the closure of the highest weight vector orbit and it is known as the variety of pure spinors. The defining equations were described by Manivel in \cite{MR2529169} and can be resolved directly in M2. The Betti table for the resolution is
\[
\begin{matrix}
&0&1&2&3&4&5\\\text{total:}&1&10&16&16&10&1\\\text{0:}&1&\text{.}&\text{.}&\text{.}&\text{.}&\text{.}\\\text{1:}&\text{.}&10&16&\text{.}&\text{.}&\text{.}\\\text{2:}&\text{.}&\text{.}&\text{.}&16&10&\text{.}\\\text{3:}&\text{.}&\text{.}&\text{.}&\text{.}&\text{.}&1\\\end{matrix}
\]
This orbit closure is Gorenstein.

\subsection{The case $(E_{6}, \alpha_2)$}

The representation is $\bigwedge^3 F$, where $F=\CC^6$; the group acting is $\GL (F)$. The weights of $\bigwedge^3 F$ are of the form $\epsilon_i + \epsilon_j + \epsilon_k$ for $1 \leq i < j < k \leq 6$. We label the corresponding weight vector by $[ijk]$ where $1 \leq i < j < k \leq 6$. The corresponding polynomial ring is
\[A=\CC [x_{ijk} | 1 \leq i < j < k \leq 6]= \Sym \left( \textstyle\bigwedge^3 F^* \right).\]

In characteristic zero, the representation has the following orbits, listed along with the dimension of the closure and a representative:
\begin{center}
\begin{tabular}{ccc}
orbit & dimension & representative\\ \hline
$\OO_0$ & 0& 0\\
$\OO_1$ & 10& $x_{123} =1$\\
$\OO_2$ & 15& $x_{123} = x_{145} =1$\\
$\OO_3$ & 19& $x_{123} = x_{145} = x_{246} =1$\\
$\OO_4$ & 20& $x_{123} = x_{456} = 1$\\
\end{tabular}
\end{center}
All the orbit closures are normal, Cohen-Macaulay, Gorenstein and have rational singularities. Here is the containment and singularity table:
\begin{center}
\begin{tabular}{c|c|c|c|c|c|}
& $\Obar_0$ & $\Obar_1$ & $\Obar_2$ & $\Obar_3$ & $\Obar_4$\\ \hline \hline
$\OO_0$ & ns & s & s & s & ns\\ \hline
$\OO_1$ & & ns & s & s & ns\\ \hline
$\OO_2$ & & & ns & s & ns\\ \hline
$\OO_3$ & & & & ns & ns\\ \hline
$\OO_4$ & & & & & ns\\ \hline
\end{tabular}
\end{center}

When describing the resolutions over $A$ we write simply $\lambda$ for
the Schur module $\Sc_\lambda F^* \otimes A(-|\lambda|/3)$. 
\subsubsection{The orbit $\OO_3$}
\label{sec:orbit-oo_3-1}

The orbit closure $\Obar_3$ is a hypersurface defined by an invariant
of degree 4 which can be obtained as follows:
\begin{center}
  \begin{tikzpicture}
    \matrix (m) [matrix of math nodes, row sep=20pt, column sep=42pt,
    text height=1.5ex, text depth=0.25ex, ampersand replacement=\&]
    {\bigwedge^6 F^* \otimes \bigwedge^6 F^*\\
    \bigwedge^3 F^* \otimes \bigwedge^2 F^* \otimes F^* \otimes F^*
    \otimes \bigwedge^2 F^* \otimes \bigwedge^3 F^* \\
    \bigwedge^3 F^* \otimes \bigwedge^3 F^* \otimes \bigwedge^3 F^* \otimes \bigwedge^3 F^* \\
    A_4\\};
    \path[->,semithick,font=\scriptsize]
    (m-1-1) edge node[auto] {$\Delta \otimes \Delta$} (m-2-1)
    (m-2-1) edge node[auto] {$m_{2,4} \otimes m_{3,5}$} (m-3-1);
    \path[right hook->,semithick,font=\scriptsize]
    (m-3-1) edge node[auto] {} (m-4-1);
  \end{tikzpicture}
\end{center}
An explicit formula for this invariant can be found in \cite[Remark 4.2]{MR630524}.
\subsubsection{The orbit $\OO_2$}

The expected resolution for the coordinate ring $\CC [\Obar_2]$ is
\[ (0^6) \longleftarrow (2^3, 1^3) \longleftarrow (3, 2^4, 1) \longleftarrow (4, 3^4, 2) \longleftarrow (4^3, 3^3) \longleftarrow (5^6) \longleftarrow 0\]
The differential $d_2$ was written explicitly as the map
\begin{center}
  \begin{tikzpicture}
    \matrix (m) [matrix of math nodes, row sep=10pt, column sep=45pt,
    text height=1.5ex, text depth=0.25ex, ampersand replacement=\&]
    {F\otimes F^* \& F\otimes F^* \otimes \bigwedge^2 F \otimes
      \bigwedge^2 F^* \& \bigwedge^3 F \otimes A_1\\};
    \path[->,semithick,font=\scriptsize]
    (m-1-1) edge node[auto] {$\tr^{(2)}$} (m-1-2)
    (m-1-2) edge node[auto] {$m_{13} \otimes m_{24}$} (m-1-3);
  \end{tikzpicture}
\end{center}
restricted to the space of $6\times 6$ traceless matrices $\ker
(F\otimes F^* \rightarrow \CC)$ identified with $\Sc_{(2,1^4)}
F^*$. The Betti table for the resolution is
\[
\begin{matrix}
&0&1&2&3&4&5\\\text{total:}&1&20&35&35&20&1\\\text{0:}&1&\text{.}&\text{.}&\text{.}&\text{.}&\text{.}\\\text{1:}&\text{.}&\text{.}&\text{.}&\text{.}&\text{.}&\text{.}\\\text{2:}&\text{.}&20&35&\text{.}&\text{.}&\text{.}\\\text{3:}&\text{.}&\text{.}&\text{.}&35&20&\text{.}\\\text{4:}&\text{.}&\text{.}&\text{.}&\text{.}&\text{.}&\text{.}\\\text{5:}&\text{.}&\text{.}&\text{.}&\text{.}&\text{.}&1\\\end{matrix}
\]

\subsubsection{The orbit $\OO_1$}

The variety $\Obar_1$ is the closure of the highest weight vector orbit. Geometrically, it is the cone over the Grassmannian $\Gr (3, \CC^6)$. The defining ideal is generated by Pl\"ucker relations and can be obtained in M2 using {\tt Grassmannian(2, 5, CoefficientRing => QQ)}. The Betti table of the minimal free resolution is
\[\begin{matrix}
&0&1&2&3&4&5&6&7&8&9&10\\\text{total:}&1&35&140&301&735&1080&735&301&140&35&1\\\text{0:}&1&\text{.}&\text{.}&\text{.}&\text{.}&\text{.}&\text{.}&\text{.}&\text{.}&\text{.}&\text{.}\\\text{1:}&\text{.}&35&140&189&\text{.}&\text{.}&\text{.}&\text{.}&\text{.}&\text{.}&\text{.}\\\text{2:}&\text{.}&\text{.}&\text{.}&112&735&1080&735&112&\text{.}&\text{.}&\text{.}\\\text{3:}&\text{.}&\text{.}&\text{.}&\text{.}&\text{.}&\text{.}&\text{.}&189&140&35&\text{.}\\\text{4:}&\text{.}&\text{.}&\text{.}&\text{.}&\text{.}&\text{.}&\text{.}&\text{.}&\text{.}&\text{.}&1\\\end{matrix}
\]
This resolution was first determined by Pragacz and Weyman in \cite{MR926298}.

\subsection{The case $(E_6, \alpha_3)$}
\label{sec:case-e_6-alpha_3}

The representation is $E \otimes \bigwedge^2 F$, where $E=\CC^2$ and
$F=\CC^5$; the group acting is $\SL (E) \times \SL (F) \times
\CC^\times$. We denote the tensor $e_a \otimes f_i \wedge f_j$ by
$[a;ij]$ where $a=1,2$ and $1 \leq i < j \leq 5$. The corresponding polynomial ring is
\[A=\CC [x_{a;ij} | a=1,2; 1 \leq i < j \leq 5]= \Sym \left(
  \textstyle E^* \otimes\bigwedge^2 F^* \right).\]

In characteristic zero, the representation has the following orbits, listed along with the dimension of the closure and a representative:
\begin{center}
\begin{tabular}{ccc}
orbit & dimension & representative\\ \hline
$\OO_0$ & 0& 0\\
$\OO_1$ & 8& $x_{1;12} =1$\\
$\OO_2$ & 11& $x_{1;12} = x_{1;34} =1$\\
$\OO_3$ & 12& $x_{1;12} = x_{2;13} =1$\\
$\OO_4$ & 15& $x_{1;12} = x_{1;34} = x_{2;13} =1$\\
$\OO_5$ & 16& $x_{1;12} = x_{2;34} =1$\\
$\OO_6$ & 18& $x_{1;12} = x_{2;34} = x_{1;35} =1$\\
$\OO_7$ & 20& $x_{1;12} = x_{2;34} = x_{1;35} = x_{2;15} =1$
\end{tabular}
\end{center}
All the orbit closures, except for $\Obar_6$, are normal, Cohen-Macaulay and have rational singularities. Here is the containment and singularity table:
\begin{center}
\begin{tabular}{c|c|c|c|c|c|c|c|c|}
& $\Obar_0$ & $\Obar_1$ & $\Obar_2$ & $\Obar_3$ & $\Obar_4$ & $\Obar_5$ & $\Obar_6$ & $\Obar_7$\\ \hline \hline
$\OO_0$ & ns & s & s & s & s & s & s & ns\\ \hline
$\OO_1$ & & ns & ns & s & s & s & s & ns\\ \hline
$\OO_2$ & & & ns & & s & ns & s & ns\\ \hline
$\OO_3$ & & & & ns & s & s & s & ns\\ \hline
$\OO_4$ & & & & & ns & ns & s & ns\\ \hline
$\OO_5$ & & & & & & ns & s & ns\\ \hline
$\OO_6$ & & & & & & & ns & ns\\ \hline
$\OO_7$ & & & & & & & & ns\\ \hline
\end{tabular}
\end{center}

We denote the free $A$-module $\Sc_{(a,b)} E^* \otimes \Sc_{(c,d,e,f,g)} F^* \otimes A(-a-b)$ by $(a,b;c,d,e,f,g)$.

\subsubsection{The orbit $\OO_6$}
\label{sec:E63O6}

The orbit closure $\Obar_6$ is not normal. The expected 
resolution for the coordinate ring of the normalization of
$\Obar_6$ is
\[A \oplus (1,1;1,1,1,1,0) \leftarrow (2,1;2,1,1,1,1) \leftarrow
(4,1;2,2,2,2,2) \leftarrow 0\]
The differential $d_2 : (4,1;2,2,2,2,2) \rightarrow (2,1;2,1,1,1,1)$ was written explicitly as the map
\begin{center}
  \begin{tikzpicture}
    \matrix (m) [matrix of math nodes, row sep=10pt, column sep=42pt,
    text height=1.5ex, text depth=0.25ex, ampersand replacement=\&]
    {\Sc_3 E^* \& \Sc_2 E^* \otimes E^* \& \Sc_2 E^* \otimes E^* \otimes
      \bigwedge^4 F \otimes \bigwedge^4 F^*\\};
    \path[->,semithick,font=\scriptsize]
    (m-1-1) edge node[auto] {$\Delta$} (m-1-2)
    (m-1-2) edge node[auto] {$\tr^{(4)}$} (m-1-3);
  \end{tikzpicture}
\end{center}
then $\Sc_2 E^* \otimes \bigwedge^4 F^*$ is embedded in $A_2$ via the map:
\begin{center}
  \begin{tikzpicture}
    \matrix (m) [matrix of math nodes, row sep=20pt, column sep=35pt,
    text height=1.5ex, text depth=0.25ex, ampersand replacement=\&]
    {\Sc_2 E^* \otimes \bigwedge^4 F^* \\ \Sc_2 E^* \otimes
      \bigwedge^2 F^* \otimes \bigwedge^2 F^* \\ \Sc_2 E^* \otimes
      \Sc_2 (\bigwedge^2 F^*) \\ A_2\\};
    \path[->,semithick,font=\scriptsize]
    (m-1-1) edge node[auto] {$\Delta$} (m-2-1)
    (m-2-1) edge node[auto] {$m_{2,3}$} (m-3-1);
    \path[right hook->,semithick]
    (m-3-1) edge node[auto] {} (m-4-1);
  \end{tikzpicture}
\end{center}
to get a map $\Sc_3 E^* \rightarrow E^* \otimes \bigwedge^4 F \otimes
A_2$. The Betti table for the normalization is
\[\begin{matrix}
&0&1&2\\\text{total:}&6&10&4\\\text{0:}&1&\text{.}&\text{.}\\\text{1:}&\text{.}&\text{.}&\text{.}\\\text{2:}&5&10&\text{.}\\\text{3:}&\text{.}&\text{.}&4\\\end{matrix}\]
Dropping the row of degree 3 in the first differential we get the map
\[(2,1;2,1,1,1,1) \rightarrow (1,1;1,1,1,1,0)\]
This is a presentation for the cokernel $C(6)$ of the inclusion $\CC [\Obar_6]
\hookrightarrow \CC [\mathcal{N} (\Obar_6)]$, whose Betti table is
\[\begin{matrix}
&0&1&2&3&4\\\text{total:}&5&10&14&10&1\\\text{2:}&5&10&\text{.}&\text{.}&\text{.}\\\text{3:}&\text{.}&\text{.}&4&\text{.}&\text{.}\\\text{4:}&\text{.}&\text{.}&10&10&\text{.}\\\text{5:}&\text{.}&\text{.}&\text{.}&\text{.}&\text{.}\\\text{6:}&\text{.}&\text{.}&\text{.}&\text{.}&1\\\end{matrix}\]
By the cone procedure, we recover the resolution of $\CC [\Obar_6]$
which has the following Betti table
\[\begin{matrix}
&0&1&2&3\\\text{total:}&1&10&10&1\\\text{0:}&1&\text{.}&\text{.}&\text{.}\\\text{1:}&\text{.}&\text{.}&\text{.}&\text{.}\\\text{2:}&\text{.}&\text{.}&\text{.}&\text{.}\\\text{3:}&\text{.}&\text{.}&\text{.}&\text{.}\\\text{4:}&\text{.}&\text{.}&\text{.}&\text{.}\\\text{5:}&\text{.}&10&10&\text{.}\\\text{6:}&\text{.}&\text{.}&\text{.}&\text{.}\\\text{7:}&\text{.}&\text{.}&\text{.}&1\\\end{matrix}\]
We observe that $\Obar_6$ is not Cohen-Macaulay because it has
codimension 2 but its coordinate ring has projective dimension 3.

\subsubsection{The orbit $\OO_5$}
\label{sec:orbit-oo_5}

The orbit closure $\Obar_5$ is degenerate. The expected resolution for the coordinate ring $\CC[\Obar_5]$ is
\begin{gather*}
  A \leftarrow (2,1;2,1,1,1,1) \leftarrow (4,1;2,2,2,2,2) \oplus
  (2,2;2,2,2,1,1) \leftarrow\\
  \leftarrow (4,3;3,3,3,3,2) \leftarrow (4,4;4,3,3,3,3) \leftarrow 0
\end{gather*}
We construct explicitly the differential $d_4$ as follows
\begin{center}
  \begin{tikzpicture}
    \matrix (m) [matrix of math nodes, row sep=20pt, column sep=35pt,
    text height=1.5ex, text depth=0.25ex, ampersand replacement=\&]
    {F^* \\ F^* \otimes E \otimes E^* \otimes F \otimes F^* \\ E
      \otimes E^* \otimes F \otimes \bigwedge^2 F^* \\ E \otimes F
      \otimes A_1\\};
    \path[->,semithick,font=\scriptsize]
    (m-1-1) edge node[auto] {$\tr^{(1)} \otimes \tr^{(1)}$} (m-2-1)
    (m-2-1) edge node[auto] {$m_{1,5}$} (m-3-1);
    \path[right hook->,semithick,font=\scriptsize]
    (m-3-1) edge node[auto] {$m_{2,4}$} (m-4-1);
  \end{tikzpicture}
\end{center}
The Betti table for the resolution is
\[\begin{matrix}
&0&1&2&3&4\\\text{total:}&1&10&14&10&5\\\text{0:}&1&\text{.}&\text{.}&\text{.}&\text{.}\\\text{1:}&\text{.}&\text{.}&\text{.}&\text{.}&\text{.}\\\text{2:}&\text{.}&10&10&\text{.}&\text{.}\\\text{3:}&\text{.}&\text{.}&4&\text{.}&\text{.}\\\text{4:}&\text{.}&\text{.}&\text{.}&10&5\\\end{matrix}
\]

\subsubsection{The orbit $\OO_4$}
The orbit closure $\Obar_4$ is degenerate and comes from a smaller orbit which is a hypersurface of degree 4 in $E \otimes \bigwedge^2 (\CC^4) \hookrightarrow E \otimes \bigwedge^2 F$. This hypersurface is defined by the discriminant of the Pfaffian of a $4\times 4$ skew-symmetric matrix of generic linear forms in two variables. The equations of $\Obar_4$ in $E \otimes \bigwedge^2 F$ are obtained by taking polarizations of this discriminant with respect to the inclusion $E \otimes \bigwedge^2 \CC^4 \hookrightarrow E \otimes \bigwedge^2 F$ (corresponding to the representation $\Sc_{(2,2)} E^* \otimes \Sc_{(2,2,2,2)} F^*$) together with the defining  equations of the ``generic degenerate orbit'' $\Obar_5$. The Betti table for the resolution is
\[
\begin{matrix}
&0&1&2&3&4&5\\\text{total:}&1&25&62&55&20&3\\\text{0:}&1&\text{.}&\text{.}&\text{.}&\text{.}&\text{.}\\\text{1:}&\text{.}&\text{.}&\text{.}&\text{.}&\text{.}&\text{.}\\\text{2:}&\text{.}&10&10&\text{.}&\text{.}&\text{.}\\\text{3:}&\text{.}&15&52&45&\text{.}&\text{.}\\\text{4:}&\text{.}&\text{.}&\text{.}&10&20&\text{.}\\\text{5:}&\text{.}&\text{.}&\text{.}&\text{.}&\text{.}&3\\\end{matrix}
\]

\subsubsection{The orbit $\OO_3$}

The orbit closure $\Obar_3$ is degenerate. The equations can be obtained directly through the map
\begin{center}
  \begin{tikzpicture}
    \matrix (m) [matrix of math nodes, row sep=10pt, column sep=27pt,
    text height=1.5ex, text depth=0.25ex, ampersand replacement=\&]
    {\Sc_2 E^* \otimes \bigwedge^4 F^*
      \& A_2\\};
    \path[right hook->,semithick,font=\scriptsize]
    (m-1-1) edge node[auto] {} (m-1-2);
  \end{tikzpicture}
\end{center}
which is the embedding described in \ref{sec:E63O6}. The Betti table for the resolution is
\[
\begin{matrix}
&0&1&2&3&4&5&6&7&8\\\text{total:}&1&15&75&187&265&245&121&20&5\\\text{0:}&1&\text{.}&\text{.}&\text{.}&\text{.}&\text{.}&\text{.}&\text{.}&\text{.}\\\text{1:}&\text{.}&15&20&\text{.}&\text{.}&\text{.}&\text{.}&\text{.}&\text{.}\\\text{2:}&\text{.}&\text{.}&55&152&105&\text{.}&\text{.}&\text{.}&\text{.}\\\text{3:}&\text{.}&\text{.}&\text{.}&35&160&245&120&\text{.}&\text{.}\\\text{4:}&\text{.}&\text{.}&\text{.}&\text{.}&\text{.}&\text{.}&1&20&5\\\end{matrix}
\]

\subsubsection{The orbit $\OO_2$}

The orbit closure $\Obar_2$ is degenerate. The equations can be obtained directly through the map
\begin{center}
  \begin{tikzpicture}
    \matrix (m) [matrix of math nodes, row sep=10pt, column sep=27pt,
    text height=1.5ex, text depth=0.25ex, ampersand replacement=\&]
    {\bigwedge^2 E^* \otimes \bigwedge^2 \left( \bigwedge^2 F^* \right) \& A_2\\};
    \path[right hook->,semithick,font=\scriptsize]
    (m-1-1) edge node[auto] {} (m-1-2);
  \end{tikzpicture}
\end{center}
which gives the $2\times 2$ minors of the generic matrix of a linear map $E^* \rightarrow \bigwedge^2 F^*$. The ideal is resolved by the Eagon-Northcott complex with Betti table
\[
\begin{matrix}
&0&1&2&3&4&5&6&7&8&9\\\text{total:}&1&45&240&630&1008&1050&720&315&80&9\\\text{0:}&1&\text{.}&\text{.}&\text{.}&\text{.}&\text{.}&\text{.}&\text{.}&\text{.}&\text{.}\\\text{1:}&\text{.}&45&240&630&1008&1050&720&315&80&9\\\end{matrix}
\]

\subsubsection{The orbit $\OO_1$}
The orbit closure $\Obar_1$ is degenerate. The equations are simply those of the orbit closures $\Obar_2$ and $\Obar_3$ taken together. The Betti table of the resolution is
\[
\begin{matrix}
&0&1&2&3&4&5&6&7&8&9&10&11&12\\\text{total:}&1&60&360&1011&1958&3750&5490&5235&3257&1329&375&60&4\\\text{0:}&1&\text{.}&\text{.}&\text{.}&\text{.}&\text{.}&\text{.}&\text{.}&\text{.}&\text{.}&\text{.}&\text{.}&\text{.}\\\text{1:}&\text{.}&60&360&1005&1458&1050&720&315&80&9&\text{.}&\text{.}&\text{.}\\\text{2:}&\text{.}&\text{.}&\text{.}&6&500&2700&4770&4920&3177&1200&285&30&\text{.}\\\text{3:}&\text{.}&\text{.}&\text{.}&\text{.}&\text{.}&\text{.}&\text{.}&\text{.}&\text{.}&120&90&30&4\\\end{matrix}
\]

\subsection{The case $(E_6, \alpha_4)$}
The representation is $E\otimes F \otimes H$, where $E=\CC^2$ and $F=H=\CC^3$; the group acting is $\SL (E) \times \SL(F) \times \SL(H) \times \CC^\times$. We denote the tensor $e_i \otimes f_j \otimes h_k$ by [$i$;$j$;$k$], where $i=1,2$  and $j,k=1,2,3$. The corresponding polynomial ring is
\[A=\CC [x_{ijk} | i=1,2; j,k=1,2,3] = \Sym \left( E^* \otimes F^* \otimes H^* \right)\]

In characteristic zero, the representation has the following orbits, listed along with the dimension of the closure and a representative.
\begin{center}
\begin{tabular}{ccc}
orbit & dimension & representative\\ \hline
$\OO_0$ & 0 & 0 \\
$\OO_1$ & 6 & $x_{111} =1$\\
$\OO_2$ & 8 & $x_{111} = x_{221} =1$\\
$\OO_3$ & 8 & $x_{111} = x_{212} =1$\\
$\OO_4$ & 9 & $x_{111} = x_{122} =1$\\
$\OO_5$ & 11 & $x_{111} = x_{122} = x_{212} =1$\\
$\OO_6$ & 10 & $x_{111} = x_{122} = x_{133} =1$\\
$\OO_7$ & 12 & $x_{111} = x_{222} =1$\\
$\OO_8$ & 13 & $x_{111} = x_{222} = x_{132} =1$\\
$\OO_9$ & 13 & $x_{111} = x_{222} = x_{123} =1$\\
$\OO_{10}$ & 14 & $x_{111} = x_{222} = x_{132} = x_{231} =1$\\
$\OO_{11}$ & 14 & $x_{111} = x_{222} = x_{123} = x_{213} =1$\\
$\OO_{12}$ & 14 & $x_{111} = x_{222} = x_{123} = x_{132} =1$\\
$\OO_{13}$ & 14 & $x_{111} = x_{222} = x_{123} = x_{231} =1$\\
$\OO_{14}$ & 15 & $x_{111} = x_{222} = x_{133} =1$\\
$\OO_{15}$ & 16 & $x_{111} = x_{222} = x_{123} = x_{231} = x_{132} =1$\\
$\OO_{16}$ & 17 & $x_{111} = x_{222} = x_{133} = x_{213} =1$\\
$\OO_{17}$ & 18 & $x_{111} = x_{211} = x_{122} = x_{133} =1, x_{222}=-1$
\end{tabular}
\end{center}

The containment and singularity table can be found below.

\begin{sidewaystable}
\centering
\begin{tabular}{c|c|c|c|c|c|c|c|c|c|c|c|c|c|c|c|c|c|c|}
& $\Obar_0$ & $\Obar_1$ & $\Obar_2$ & $\Obar_3$ & $\Obar_4$ & $\Obar_5$ & $\Obar_6$ & $\Obar_7$ & $\Obar_8$ & $\Obar_9$ & $\Obar_{10}$ & $\Obar_{11}$ & $\Obar_{12}$ & $\Obar_{13}$ & $\Obar_{14}$ & $\Obar_{15}$ & $\Obar_{16}$ & $\Obar_{17}$\\ \hline \hline
    $\OO_{0}$ & ns & s & s & s & s & s & s & s & s & s & s & s & s & s & s & s & s & ns  \\ \hline
    $\OO_{1}$ & ¥ & ns & ns & ns & s & s & ns & s & s & s & s & s & s & s & s & s & s & ns  \\ \hline
    $\OO_{2}$ & ¥ & ¥ & ns & ¥ & ¥ & s & ¥ & s & s & s & s & ns & s & s & s & s & s & ns \\ \hline
    $\OO_{3}$ & ¥ & ¥ & ¥ & ns & ¥ & s & ¥ & s & s & s & ns & s & s & s & s & s & s & ns \\ \hline
    $\OO_{4}$ & ¥ & ¥ & ¥ & ¥ & ns & s & ns & ns & s & s & ns & ns & s & s & s & s & s & ns \\ \hline
    $\OO_{5}$ & ¥ & ¥ & ¥ & ¥ & ¥ & ns & ¥ & ns & s & s & ns & ns & s & s & s & s & s & ns \\ \hline
    $\OO_{6}$ & ¥ & ¥ & ¥ & ¥ & ¥ & ¥ & ns & ¥ & ¥ & ¥ & ¥ & ¥ & s & ¥ & s & s & s & ns \\ \hline
    $\OO_{7}$ & ¥ & ¥ & ¥ & ¥ & ¥ & ¥ & ¥ & ns & s & s & ns & ns & s & s & s & s & s & ns \\ \hline
    $\OO_{8}$ & ¥ & ¥ & ¥ & ¥ & ¥ & ¥ & ¥ & ¥ & ns & ¥ & ns & ¥ & ns & ns & ns & s & s & ns \\ \hline
    $\OO_{9}$ & ¥ & ¥ & ¥ & ¥ & ¥ & ¥ & ¥ & ¥ & ¥ & ns & ¥ & ns & ns & ns & ns & s & s & ns \\ \hline
    $\OO_{10}$ & ¥ & ¥ & ¥ & ¥ & ¥ & ¥ & ¥ & ¥ & ¥ & ¥ & ns & ¥ & ¥ & ¥ & ¥ & s & s & ns \\ \hline
    $\OO_{11}$ & ¥ & ¥ & ¥ & ¥ & ¥ & ¥ & ¥ & ¥ & ¥ & ¥ & ¥ & ns & ¥ & ¥ & ¥ & s & s & ns \\ \hline
    $\OO_{12}$ & ¥ & ¥ & ¥ & ¥ & ¥ & ¥ & ¥ & ¥ & ¥ & ¥ & ¥ & ¥ & ns & ¥ & ns & s & s & ns \\ \hline
    $\OO_{13}$ & ¥ & ¥ & ¥ & ¥ & ¥ & ¥ & ¥ & ¥ & ¥ & ¥ & ¥ & ¥ & ¥ & ns & ¥ & s & s & ns \\ \hline
    $\OO_{14}$ & ¥ & ¥ & ¥ & ¥ & ¥ & ¥ & ¥ & ¥ & ¥ & ¥ & ¥ & ¥ & ¥ & ¥ & ns & ¥ & s & ns \\ \hline
    $\OO_{15}$ & ¥ & ¥ & ¥ & ¥ & ¥ & ¥ & ¥ & ¥ & ¥ & ¥ & ¥ & ¥ & ¥ & ¥ & ¥ & ns & s & ns \\ \hline
    $\OO_{16}$ & ¥ & ¥ & ¥ & ¥ & ¥ & ¥ & ¥ & ¥ & ¥ & ¥ & ¥ & ¥ & ¥ & ¥ & ¥ & ¥ & ns & ns \\ \hline
    $\OO_{17}$ & ¥ & ¥ & ¥ & ¥ & ¥ & ¥ & ¥ & ¥ & ¥ & ¥ & ¥ & ¥ & ¥ & ¥ & ¥ & ¥ & ¥ & ns \\ \hline
\end{tabular}
\caption{Containment and singularity table for the case $(E_6,\alpha_4)$}
\end{sidewaystable}

We denote the free $A$-module $\Sc_{(a,b)} E^* \otimes \Sc_{(c,d,e)} F^* \otimes \Sc_{(f,g,h)} F^* \otimes A(-a-b)$ by $(a,b;c,d,e;f,g,h)$.

Certain pairs of orbit closures are isomorphic under the involution exchanging $F$ and $H$. This involution produces an automorphism of $A$ exchanging $x_{ijk}$ with $x_{ikj}$; this, in turn, induces isomorphisms of the coordinate rings and free resolutions. Because of this, it is enough to discuss only one case in each pair.

\subsubsection{The orbit $\OO_{16}$}\label{E64O16}
The orbit closure $\Obar_{16}$ is a hypersurface defined by the discriminant of the determinant of a generic $3\times 3$ matrix of linear forms in two variables, which is a homogeneous polynomial of degree 12. Explicitly:
	\begin{align*}
	\delta  & = \det
	\begin{pmatrix}
	u x_{111} + v x_{211} & u x_{112} + v x_{212} & u x_{113} + v x_{213}\\
	u x_{121} + v x_{221} & u x_{122} + v x_{222} & u x_{123} + v x_{223}\\
	u x_{131} + v x_{231} & u x_{132} + v x_{232} & u x_{133} + v x_{233}
	\end{pmatrix} =\\
	& = a_{3,0} u^3 + a_{2,1} u^2 v + a_{1,2} u v^2 + a_{0,3} v^3,
	\end{align*}
and
	\[\disc (\delta) = 27 a_{3,0}^2 a_{1,2}^2 + 4 a_{3,0} a_{1,2}^3 + 4 a_{2,1}^3 a_{0,3} - a_{2,1}^2 a_{1,2}^2 -18 a_{3,0} a_{2,1} a_{1,2} a_{0,3}.\]
The orbit closure $\Obar_{16}$ is not normal. The expected resolution for the coordinate ring of the normalization $\mathcal{N} (\Obar_{16})$ is
\[A \oplus (2,1;1,1,1;1,1,1) \leftarrow (4,2;2,2,2;2,2,2) \leftarrow 0\]
so there is only one differential $d_1$ with two blocks. The block $(4,2;2,2,2;2,2,2) \rightarrow (2,1;1,1,1;1,1,1)$ is the map
\begin{center}
	\begin{tikzpicture}
	\matrix (m) [matrix of math nodes, row sep=20pt, column sep=35pt,
	text height=1.5ex, text depth=0.25ex, ampersand replacement=\&]
	{\Sc_{2} E^* \otimes \bigwedge^3 F^* \otimes \bigwedge^3 H^*\\
	E \otimes E^* \otimes \Sc_{2} E^* \otimes \bigwedge^3 F^* \otimes \bigwedge^3 H^*\\
	E \otimes \Sc_{3} E^* \otimes \bigwedge^3 F^* \otimes \bigwedge^3 H^*\\
	E \otimes A_3\\};
	\path[->,semithick,font=\scriptsize]
	(m-1-1) edge node[auto] {$\tr^{(1)}$} (m-2-1)
	(m-2-1) edge node[auto] {$m_{2,3}$} (m-3-1);
	\path[right hook->,semithick]
	(m-3-1) edge node[auto] {} (m-4-1);
	\end{tikzpicture}
\end{center}
where the embedding $\Sc_{3} E^* \otimes \bigwedge^3 F^* \otimes \bigwedge^3 H^* \hookrightarrow A_3$ at the end is given by
\[(e^*_1)^i (e^*_2)^j \longmapsto \frac{i! j!}{3!} a_{i,j},\]
the $a_{i,j}$ being the coefficients of $\delta$ as above. Notice that this block also provides a minimal presentation of the cokernel $C(16)$ of the inclusion $\CC [\Obar_{16}]
\hookrightarrow \CC [\mathcal{N} (\Obar_{16})]$.
The block $(4,2;2,2,2;2,2,2) \rightarrow A$ can be obtained as follows. First construct the following map for the $E^*$ factor:
\begin{center}
	\begin{tikzpicture}
	\matrix (m) [matrix of math nodes, row sep=20pt, column sep=35pt,
	text height=1.5ex, text depth=0.25ex, ampersand replacement=\&]
	{\Sc_{2} E^* \otimes \bigwedge^2 E^* \otimes \bigwedge^2 E^*\\
	E^* \otimes E^* \otimes E^* \otimes E^* \otimes E^* \otimes E^*\\
	\Sc_{3} E^* \otimes \Sc_{3} E^*\\};
	\path[->,semithick,font=\scriptsize]
	(m-1-1) edge node[auto] {$\Delta \otimes \Delta \otimes \Delta$} (m-2-1)
	(m-2-1) edge node[auto] {$m_{1,3,5} \otimes m_{2,4,6}$} (m-3-1);
	\end{tikzpicture}
\end{center}
Then embed into $A$ via the map
\begin{center}
	\begin{tikzpicture}
	\matrix (m) [matrix of math nodes, row sep=20pt, column sep=35pt,
	text height=1.5ex, text depth=0.25ex, ampersand replacement=\&]
	{\Sc_{3} E^* \otimes \bigwedge^3 F^* \otimes \bigwedge^3 H^* \otimes \Sc_{3} E^* \otimes  \bigwedge^3 F^* \otimes \bigwedge^3 H^*\\
	A_3 \otimes A_3\\
	A_6\\};
	\path[->,semithick,font=\scriptsize]
	(m-1-1) edge node[auto] {} (m-2-1)
	(m-2-1) edge node[auto] {} (m-3-1);
	\end{tikzpicture}
\end{center}
where the first step uses the embedding described earlier twice and the second step is symmetric multiplication.

The Betti table for the resolution of the normalization is
\[\begin{matrix}
&0&1\\\text{total:}&3&3\\\text{0:}&1&\text{.}\\\text{1:}&\text{.}&\text{.}\\\text{2:}&\text{.}&\text{.}\\\text{3:}&2&\text{.}\\\text{4:}&\text{.}&\text{.}\\\text{5:}&\text{.}&3\\\end{matrix}\]
and the Betti table for the resolution of the cokernel $C(16)$ is
\[\begin{matrix}
&0&1&2\\\text{total:}&2&3&1\\\text{3:}&2&\text{.}&\text{.}\\\text{4:}&\text{.}&\text{.}&\text{.}\\\text{5:}&\text{.}&3&\text{.}\\\text{6:}&\text{.}&\text{.}&\text{.}\\\text{7:}&\text{.}&\text{.}&\text{.}\\\text{8:}&\text{.}&\text{.}&\text{.}\\\text{9:}&\text{.}&\text{.}&\text{.}\\\text{10:}&\text{.}&\text{.}&1\\\end{matrix}\]

\subsubsection{The orbit $\OO_{15}$}\label{E64O15}
The orbit closure $\Obar_{15}$ is normal with rational singularities. The expected resolution for the coordinate ring $\CC [\Obar_{15}]$ is
\[ A \longleftarrow (4,2;2,2,2;2,2,2) \longleftarrow (5,4;3,3,3;3,3,3) \longleftarrow 0\]
The differential $d_2$ was written explicitly by taking the map
\begin{center}
	\begin{tikzpicture}
	\matrix (m) [matrix of math nodes, row sep=20pt, column sep=35pt,
	text height=1.5ex, text depth=0.25ex, ampersand replacement=\&]
	{\bigwedge^2 E^* \otimes \bigwedge^2 E^* \otimes E^*\\
	E^* \otimes E^* \otimes E^* \otimes E^* \otimes E^*\\
	\Sc_{2} E^* \otimes \Sc_{2} E^* \otimes E^*\\
	\Sc_{3} E^* \otimes \Sc_{2} E^*\\};
	\path[->,semithick,font=\scriptsize]
	(m-1-1) edge node[auto] {$\Delta \otimes \Delta$} (m-2-1)
	(m-2-1) edge node[auto] {$m_{1,3} \otimes m_{2,4}$} (m-3-1)
	(m-3-1) edge node[auto] {$m_{1,3}$} (m-4-1);
	\end{tikzpicture}
\end{center}
on the $E^*$ factor and then embedding $\Sc_{3} E^* \otimes \bigwedge^3 F^* \otimes \bigwedge^3 H^*$ into $A_3$ as described in \ref{E64O16}. The Betti table for the resolution is
\[\begin{matrix}
&0&1&2\\\text{total:}&1&3&2\\\text{0:}&1&\text{.}&\text{.}\\\text{1:}&\text{.}&\text{.}&\text{.}\\\text{2:}&\text{.}&\text{.}&\text{.}\\\text{3:}&\text{.}&\text{.}&\text{.}\\\text{4:}&\text{.}&\text{.}&\text{.}\\\text{5:}&\text{.}&3&\text{.}\\\text{6:}&\text{.}&\text{.}&\text{.}\\\text{7:}&\text{.}&\text{.}&2\\\end{matrix}\]
We conclude that $\Obar_{15}$ is \CM{}.

\subsubsection{The orbit $\OO_{14}$}\label{sec:E64O14}

The orbit closure $\Obar_{14}$ is not normal. The expected
resolution for the coordinate ring of the normalization
$\mathcal{N} (\Obar_{14})$ is
\begin{gather*}
A \oplus (1, 1; 1, 1, 0; 1, 1, 0) \leftarrow (2, 1; 1, 1, 1; 2, 1, 0) \oplus (2, 1; 2, 1, 0; 1, 1, 1) \leftarrow\\
\leftarrow (3, 1; 2, 1, 1; 2, 1, 1) \leftarrow (5, 1; 2, 2, 2; 2, 2, 2) \leftarrow 0
\end{gather*}
The differential $d_3 : (5, 1; 2, 2, 2; 2, 2, 2) \rightarrow (3, 1; 2, 1, 1; 2, 1, 1)$ was written explicitly as the map
\begin{center}
  \begin{tikzpicture}
    \matrix (m) [matrix of math nodes, row sep=20pt, column sep=35pt,
	text height=1.5ex, text depth=0.25ex, ampersand replacement=\&]
    {\Sc_{4} E^* \\
    \Sc_{2} E^* \otimes \Sc_{2} E^* \otimes \bigwedge^2 F \otimes \bigwedge^2 F^* \otimes \bigwedge^2 H \otimes \bigwedge^2 H^*\\
    \Sc_{2} E^* \otimes \bigwedge^2 F \otimes \bigwedge^2 H \otimes A_2\\};
    \path[->,semithick,font=\scriptsize]
    (m-1-1) edge node[auto] {$\Delta \otimes \tr^{(2)} \otimes \tr^{(2)}$} (m-2-1)
    (m-2-1) edge node[auto] {} (m-3-1);
  \end{tikzpicture}
\end{center}
The embedding $\Sc_{2} E^* \otimes \bigwedge^2 F^* \otimes \bigwedge^2 H^* \hookrightarrow A_2$ works by sending the basis vector
\[(e_1^*)^i (e_2^*)^j \otimes f_a \wedge f_b \otimes h_c \wedge h_d\]
to the coefficient of $u^i v^j$ in the expansion of
\[\frac{i!j!}{2!} \det 
	\begin{pmatrix}
	u x_{1ac} + v x_{2ac} & u x_{1ad} + v x_{2ad} \\
	u x_{1bc} + v x_{2bc} & u x_{1bd} + v x_{2bd}
	\end{pmatrix}
\]
Notice that the minor above is the one corresponding to rows $a,b$ and columns $c,d$ in the matrix $\delta$ defined in \ref{E64O16}.
The Betti table for the normalization is
\[\begin{matrix}
&0&1&2&3\\\text{total:}&10&32&27&5\\\text{0:}&1&\text{.}&\text{.}&\text{.}\\\text{1:}&\text{.}&\text{.}&\text{.}&\text{.}\\\text{2:}&9&32&27&\text{.}\\\text{3:}&\text{.}&\text{.}&\text{.}&5\\\end{matrix}\]
Dropping the row of degree 3 in the first differential we get the map
\[(2, 1; 1, 1, 1; 2, 1, 0) \oplus (2, 1; 2, 1, 0; 1, 1, 1) \rightarrow (1, 1; 1, 1, 0; 1, 1, 0)\]
This is a presentation for the cokernel $C(14)$ of the inclusion $\CC [\Obar_{14}]
\hookrightarrow \CC [\mathcal{N} (\Obar_{14})]$; the Betti table for $C(14)$ is
\[\begin{matrix}
&0&1&2&3&4&5&6&7&8\\\text{total:}&9&32&131&347&477&372&181&54&7\\\text{2:}&9&32&27&\text{.}&\text{.}&\text{.}&\text{.}&\text{.}&\text{.}\\\text{3:}&\text{.}&\text{.}&\text{.}&5&\text{.}&\text{.}&\text{.}&\text{.}&\text{.}\\\text{4:}&\text{.}&\text{.}&104&342&477&372&180&54&7\\\text{5:}&\text{.}&\text{.}&\text{.}&\text{.}&\text{.}&\text{.}&\text{.}&\text{.}&\text{.}\\\text{6:}&\text{.}&\text{.}&\text{.}&\text{.}&\text{.}&\text{.}&1&\text{.}&\text{.}\\\end{matrix}\]
By the truncated cone procedure, we recover the resolution of $\CC [\Obar_{14}]$
which has the following Betti table
\[\begin{matrix}
&0&1&2&3&4&5&6&7\\\text{total:}&1&104&342&477&372&181&54&7\\\text{0:}&1&\text{.}&\text{.}&\text{.}&\text{.}&\text{.}&\text{.}&\text{.}\\\text{1:}&\text{.}&\text{.}&\text{.}&\text{.}&\text{.}&\text{.}&\text{.}&\text{.}\\\text{2:}&\text{.}&\text{.}&\text{.}&\text{.}&\text{.}&\text{.}&\text{.}&\text{.}\\\text{3:}&\text{.}&\text{.}&\text{.}&\text{.}&\text{.}&\text{.}&\text{.}&\text{.}\\\text{4:}&\text{.}&\text{.}&\text{.}&\text{.}&\text{.}&\text{.}&\text{.}&\text{.}\\\text{5:}&\text{.}&104&342&477&372&180&54&7\\\text{6:}&\text{.}&\text{.}&\text{.}&\text{.}&\text{.}&\text{.}&\text{.}&\text{.}\\\text{7:}&\text{.}&\text{.}&\text{.}&\text{.}&\text{.}&1&\text{.}&\text{.}\\\end{matrix}\]
We observe that $\Obar_{14}$ is not Cohen-Macaulay because it has
codimension 3 but its coordinate ring has projective dimension 7.

\subsubsection{The orbit $\OO_{13}$}\label{sec:E64O13}

The orbit closure $\Obar_{13}$ is not normal. The expected 
resolution for the coordinate ring of the normalization
$\mathcal{N} (\Obar_{13})$ is
\begin{gather*}
A \oplus (1, 1; 1, 1, 0; 1, 1, 0) \leftarrow (2, 1; 1, 1, 1; 1, 1, 1) \oplus\\
\oplus (2, 1; 1, 1, 1; 2, 1, 0) \oplus (2, 1; 2, 1, 0; 1, 1, 1) \oplus (3, 0; 1, 1, 1; 1, 1, 1) \leftarrow\\
\leftarrow (2, 2; 2, 1, 1; 2, 1, 1) \oplus (3, 1; 2, 1, 1; 2, 1, 1) \oplus (3,2;2,2,2;2,2,2) \leftarrow\\
\leftarrow (4, 3; 3, 2, 2; 3, 2, 2) \leftarrow (4, 4; 3, 3, 2; 3, 3, 2) \leftarrow 0
\end{gather*}
The differential $d_4 : (4, 4; 3, 3, 2; 3, 3, 2) \rightarrow (4, 3; 3, 2, 2; 3, 2, 2)$ was written explicitly as the map
\begin{center}
  \begin{tikzpicture}
    \matrix (m) [matrix of math nodes, row sep=20pt, column sep=35pt,
	text height=1.5ex, text depth=0.25ex, ampersand replacement=\&]
    {\bigwedge^2 F^* \otimes \bigwedge^2 H^* \\
    E \otimes E^* \otimes F^* \otimes F^* \otimes H^* \otimes H^*\\
    E \otimes F^* \otimes H^* \otimes A_1\\};
    \path[->,semithick,font=\scriptsize]
    (m-1-1) edge node[auto] {$\tr^{(1)} \otimes \Delta \otimes \Delta$} (m-2-1)
    (m-2-1) edge node[auto] {$m_{2,4,6}$} (m-3-1);
  \end{tikzpicture}
\end{center}
The Betti table for the normalization is
\[\begin{matrix}
&0&1&2&3&4\\\text{total:}&10&38&37&18&9\\\text{0:}&1&\text{.}&\text{.}&\text{.}&\text{.}\\\text{1:}&\text{.}&\text{.}&\text{.}&\text{.}&\text{.}\\\text{2:}&9&38&36&\text{.}&\text{.}\\\text{3:}&\text{.}&\text{.}&\text{.}&\text{.}&\text{.}\\\text{4:}&\text{.}&\text{.}&1&18&9\\\end{matrix}\]
Dropping the row of degree 3 in the first differential we get the map
\[(2, 1; 1, 1, 1; 1, 1, 1)\oplus (2, 1; 1, 1, 1; 2, 1, 0) \oplus (2, 1; 2, 1, 0; 1, 1, 1) \rightarrow (1, 1; 1, 1, 0; 1, 1, 0)\]
Notice how the representation $(3, 0; 1, 1, 1; 1, 1, 1)$ was also dropped from the domain since it does not map to $(1, 1; 1, 1, 0; 1, 1, 0)$. This is a presentation for the cokernel $C(13)$ of the inclusion $\CC [\Obar_{13}]
\hookrightarrow \CC [\mathcal{N} (\Obar_{13})]$, whose Betti table is
\[\begin{matrix}
&0&1&2&3&4&5&6\\\text{total:}&9&34&56&95&99&36&1\\\text{2:}&9&34&36&\text{.}&\text{.}&\text{.}&\text{.}\\\text{3:}&\text{.}&\text{.}&\text{.}&5&\text{.}&\text{.}&\text{.}\\\text{4:}&\text{.}&\text{.}&20&90&99&36&\text{.}\\\text{5:}&\text{.}&\text{.}&\text{.}&\text{.}&\text{.}&\text{.}&\text{.}\\\text{6:}&\text{.}&\text{.}&\text{.}&\text{.}&\text{.}&\text{.}&1\\\end{matrix}\]
By the truncated cone procedure, we recover the resolution of $\CC [\Obar_{13}]$
which has the following Betti table
\[\begin{matrix}
&0&1&2&3&4&5\\\text{total:}&1&24&78&90&36&1\\\text{0:}&1&\text{.}&\text{.}&\text{.}&\text{.}&\text{.}\\\text{1:}&\text{.}&\text{.}&\text{.}&\text{.}&\text{.}&\text{.}\\\text{2:}&\text{.}&4&\text{.}&\text{.}&\text{.}&\text{.}\\\text{3:}&\text{.}&\text{.}&\text{.}&\text{.}&\text{.}&\text{.}\\\text{4:}&\text{.}&\text{.}&6&\text{.}&\text{.}&\text{.}\\\text{5:}&\text{.}&20&72&90&36&\text{.}\\\text{6:}&\text{.}&\text{.}&\text{.}&\text{.}&\text{.}&\text{.}\\\text{7:}&\text{.}&\text{.}&\text{.}&\text{.}&\text{.}&1\\\end{matrix}\]
We observe that $\Obar_{13}$ is not Cohen-Macaulay because it has
codimension 4 but its coordinate ring has projective dimension 5.

\subsubsection{The orbit $\OO_{12}$}\label{sec:E64O12}
The orbit closure $\Obar_{12}$ is not normal. The expected 
resolution for the coordinate ring of the normalization
$\mathcal{N} (\Obar_{12})$ is
\begin{gather*}
A \oplus (1, 1; 1, 1, 0; 1, 1, 0) \leftarrow\\
\leftarrow (2, 1; 1, 1, 1; 1, 1, 1) \oplus (2, 1; 1, 1, 1; 2, 1, 0) \oplus (2, 1; 2, 1, 0; 1, 1, 1) \leftarrow\\
\leftarrow (3,1;2,1,1;2,1,1) \oplus (3,3;2,2,2;3,2,1)\oplus (3,3;3,2,1;2,2,2)\leftarrow\\
\leftarrow (4,3;3,2,2;3,2,2) \oplus (5,1;2,2,2;2,2,2) \leftarrow (6,3;3,3,3;3,3,3) \leftarrow 0
\end{gather*}
The differential $d_4 : (6,3;3,3,3;3,3,3) \rightarrow (4,3;3,2,2;3,2,2) \oplus (5,1;2,2,2;2,2,2)$ was written explicitly. The block $(6,3;3,3,3;3,3,3) \rightarrow (4,3;3,2,2;3,2,2)$ was constructed as the map
\begin{center}
  \begin{tikzpicture}
    \matrix (m) [matrix of math nodes, row sep=20pt, column sep=35pt,
	text height=1.5ex, text depth=0.25ex, ampersand replacement=\&]
    {\Sc_{3} E^* \otimes \bigwedge^3 F^* \otimes \bigwedge^3 H^* \\
    E^* \otimes \Sc_{2} E^* \otimes F^* \otimes \bigwedge^2 F^* \otimes H^* \otimes \bigwedge^2 H^*\\
    E^* \otimes F^* \otimes H^* \otimes A_2\\};
    \path[->,semithick,font=\scriptsize]
    (m-1-1) edge node[auto] {$\Delta \otimes \Delta \otimes \Delta$} (m-2-1)
    (m-2-1) edge node[auto] {$m_{2,4,6}$} (m-3-1);
  \end{tikzpicture}
\end{center}
where the embedding $\Sc_{2} E^* \otimes \bigwedge^2 F^* \otimes \bigwedge^2 H^* \hookrightarrow A_2$ is the one described in \ref{sec:E64O14}. The second block corresponding to the map $(6,3;3,3,3;3,3,3)\rightarrow (5,1;2,2,2;2,2,2)$ was constructed as the map
\begin{center}
  \begin{tikzpicture}
    \matrix (m) [matrix of math nodes, row sep=20pt, column sep=35pt,
	text height=1.5ex, text depth=0.25ex, ampersand replacement=\&]
    { \bigwedge^2 E^* \otimes \bigwedge^2 E^* \otimes \Sc_{3} E^* \\
    E^* \otimes E^* \otimes E^* \otimes E^* \otimes \Sc_{2} E^* \otimes E^*\\
    \Sc_{4} E^* \otimes \Sc_{3} E^*\\};
    \path[->,semithick,font=\scriptsize]
    (m-1-1) edge node[auto] {$\Delta \otimes \Delta \otimes \Delta$} (m-2-1)
    (m-2-1) edge node[auto] {$m_{1,3,5} \otimes m_{2,4,6}$} (m-3-1);
  \end{tikzpicture}
\end{center}
on the $E^*$ factor and then embedding $\Sc_{3} E^* \otimes \bigwedge^3 F^* \otimes \bigwedge^3 H^*$ into $A_3$ as described in \ref{E64O16}.
The Betti table for the normalization is
\[\begin{matrix}
&0&1&2&3&4\\\text{total:}&10&34&43&23&4\\\text{0:}&1&\text{.}&\text{.}&\text{.}&\text{.}\\\text{1:}&\text{.}&\text{.}&\text{.}&\text{.}&\text{.}\\\text{2:}&9&34&27&\text{.}&\text{.}\\\text{3:}&\text{.}&\text{.}&\text{.}&5&\text{.}\\\text{4:}&\text{.}&\text{.}&16&18&\text{.}\\\text{5:}&\text{.}&\text{.}&\text{.}&\text{.}&4\\\end{matrix}\]
Dropping the row of degree 3 in the first differential we get the map
\[(2, 1; 1, 1, 1; 1, 1, 1)\oplus (2, 1; 1, 1, 1; 2, 1, 0) \oplus (2, 1; 2, 1, 0; 1, 1, 1) \rightarrow (1, 1; 1, 1, 0; 1, 1, 0)\]
which is a presentation for the cokernel $C(12)$ of the inclusion $\CC [\Obar_{12}]
\hookrightarrow \CC [\mathcal{N} (\Obar_{12})]$. Notice that this is the same as the presentation of $C(13)$ whose Betti table is described in \ref{sec:E64O13}.
By the truncated cone procedure, we recover the resolution of $\CC [\Obar_{12}]$
which has the following Betti table
\[\begin{matrix}
&0&1&2&3&4&5\\\text{total:}&1&29&88&99&40&1\\\text{0:}&1&\text{.}&\text{.}&\text{.}&\text{.}&\text{.}\\\text{1:}&\text{.}&\text{.}&\text{.}&\text{.}&\text{.}&\text{.}\\\text{2:}&\text{.}&\text{.}&\text{.}&\text{.}&\text{.}&\text{.}\\\text{3:}&\text{.}&9&\text{.}&\text{.}&\text{.}&\text{.}\\\text{4:}&\text{.}&\text{.}&16&\text{.}&\text{.}&\text{.}\\\text{5:}&\text{.}&20&72&99&40&\text{.}\\\text{6:}&\text{.}&\text{.}&\text{.}&\text{.}&\text{.}&\text{.}\\\text{7:}&\text{.}&\text{.}&\text{.}&\text{.}&\text{.}&1\\\end{matrix}\]
We observe that $\Obar_{12}$ is not Cohen-Macaulay because it has codimension 4 but its coordinate ring has projective dimension 5.

\subsubsection{The orbits $\OO_{11}$ and $\OO_{10}$}\label{sec:E64O11}
We discuss the case of the orbit $\OO_{11}$ since $\OO_{10}$ is isomorphic under the involution exchanging $F$ and $H$. The orbit closure $\Obar_{11}$ is normal with rational singularities. It is $F$-degenerate and its equations are the $3\times 3$ minors of
\[
	\begin{pmatrix}
	x_{111} & x_{121} & x_{131}\\
	x_{112} & x_{122} & x_{132}\\
	x_{113} & x_{123} & x_{133}\\
	x_{211} & x_{221} & x_{231}\\
	x_{212} & x_{222} & x_{232}\\
	x_{213} & x_{223} & x_{233}
	\end{pmatrix}
\]
the generic matrix of a linear map $F\rightarrow E^* \otimes H^*$. As such $\Obar_{11}$ is a determinantal variety and its resolution is given by the Eagon-Northcott complex with the following Betti table
\[\begin{matrix}
&0&1&2&3&4\\\text{total:}&1&20&45&36&10\\\text{0:}&1&\text{.}&\text{.}&\text{.}&\text{.}\\\text{1:}&\text{.}&\text{.}&\text{.}&\text{.}&\text{.}\\\text{2:}&\text{.}&20&45&36&10\\\end{matrix}\]
It follows that $\Obar_{11}$ is Cohen-Macaulay.

\subsubsection{The orbits $\OO_9$ and $\OO_8$}\label{sec:E64O8}
We discuss the case of the orbit $\OO_8$ since $\OO_9$ is isomorphic under the involution exchanging $F$ and $H$. The orbit closure $\Obar_8$ is normal with rational singularities. It is degenerate and comes from a smaller orbit which is a hypersurface of degree 6 in the representation
\[\Sc_{(3,3)} E^* \otimes \Sc_{(2,2,2)} F^* \otimes \Sc_{(3,3)} (\CC^2)^* \hookrightarrow \Sc_{(3,3)} E^* \otimes \Sc_{(2,2,2)} F^* \otimes \Sc_{(3,3)} H^*.\]
The hypersurface is defined by the invariant of degree 6 in $\CC^2 \otimes \CC^3 \otimes \CC^2$ which is the hyperdeterminant of the boundary format $2\times 3 \times 2$ (see \cite{MR1264417}). The equations of $\Obar_8$ are obtained by taking polarizations of such invariant with respect to the inclusion above together with the equations of the ``generic degenerate orbit'' $\Obar_{10}$. The Betti table for the resolution is
\[\begin{matrix}
&0&1&2&3&4&5\\\text{total:}&1&30&81&81&30&1\\\text{0:}&1&\text{.}&\text{.}&\text{.}&\text{.}&\text{.}\\\text{1:}&\text{.}&\text{.}&\text{.}&\text{.}&\text{.}&\text{.}\\\text{2:}&\text{.}&20&45&36&10&\text{.}\\\text{3:}&\text{.}&\text{.}&\text{.}&\text{.}&\text{.}&\text{.}\\\text{4:}&\text{.}&\text{.}&\text{.}&\text{.}&\text{.}&\text{.}\\\text{5:}&\text{.}&10&36&45&20&\text{.}\\\text{6:}&\text{.}&\text{.}&\text{.}&\text{.}&\text{.}&\text{.}\\\text{7:}&\text{.}&\text{.}&\text{.}&\text{.}&\text{.}&1\\\end{matrix}\]
It follows that $\Obar_8$ is Gorenstein.

\subsubsection{The orbit $\OO_7$}\label{sec:E64O7}
The orbit closure $\Obar_7$ is normal with rational singularities. It is $F$-$H$-degenerate with equations given by the $3\times 3$ minors of the generic matrix of a linear map $F\rightarrow E^* \otimes H^*$ together with the $3\times 3$ minors of the generic matrix of a linear map $H\rightarrow E^* \otimes F^*$; in other words, these are the equations of $\Obar_{11}$ and $\Obar_{10}$ taken together. The Betti table for the resolution is
\[\begin{matrix}
&0&1&2&3&4&5&6\\\text{total:}&1&36&99&95&56&34&9\\\text{0:}&1&\text{.}&\text{.}&\text{.}&\text{.}&\text{.}&\text{.}\\\text{1:}&\text{.}&\text{.}&\text{.}&\text{.}&\text{.}&\text{.}&\text{.}\\\text{2:}&\text{.}&36&99&90&20&\text{.}&\text{.}\\\text{3:}&\text{.}&\text{.}&\text{.}&5&\text{.}&\text{.}&\text{.}\\\text{4:}&\text{.}&\text{.}&\text{.}&\text{.}&36&34&9\\\end{matrix}\]
It follows that $\Obar_7$ is Cohen-Macaulay.

\subsubsection{The orbit $\OO_6$}\label{sec:E64O6}
The orbit closure $\Obar_6$ is normal with rational singularities. It is $E$-degenerate and its equations are the $2\times 2$ minors of
\[
	\begin{pmatrix}
	x_{111} & x_{211} \\
	x_{112} & x_{212} \\
	x_{113} & x_{213} \\
	x_{121} & x_{221} \\
	x_{122} & x_{222} \\
	x_{123} & x_{223} \\
	x_{131} & x_{231} \\
	x_{132} & x_{232} \\
	x_{133} & x_{233} 
	\end{pmatrix}
\]
the generic matrix of a linear map $E\rightarrow F^* \otimes H^*$. Therefore $\Obar_6$ is a determinantal variety and its resolution is given by the Eagon-Northcott complex with the following Betti table
\[\begin{matrix}
&0&1&2&3&4&5&6&7&8\\\text{total:}&1&36&168&378&504&420&216&63&8\\\text{0:}&1&\text{.}&\text{.}&\text{.}&\text{.}&\text{.}&\text{.}&\text{.}&\text{.}\\\text{1:}&\text{.}&36&168&378&504&420&216&63&8\\\end{matrix}\]
It follows that $\Obar_6$ is Cohen-Macaulay.

\subsubsection{The orbit $\OO_5$}\label{sec:E64O5}
The orbit closure $\Obar_5$ is normal with rational singularities. It is degenerate and comes from a smaller orbit which is a hypersurface of degree 4 in the representation
\[\Sc_{(2,2)} E^* \otimes \Sc_{(2,2)} (\CC^2)^* \otimes \Sc_{(2,2)} (\CC^2)^* \hookrightarrow \Sc_{(2,2)} E^* \otimes \Sc_{(2,2)} F^* \otimes \Sc_{(2,2)} H^*.\]
The hypersurface is defined by the invariant of degree 4 in $\CC^2 \otimes \CC^2 \otimes \CC^2$ which can be written as the discriminant of the determinant of a generic $2\times 2$ matrix of linear forms in two variables. Explicitly take:
\[\det
	\begin{pmatrix}
	u x_{111} + v x_{211} & u x_{112} + v x_{212} \\
	u x_{121} + v x_{221} & u x_{122} + v x_{222}
	\end{pmatrix}
=a_{2,0} u^2 + a_{1,1} uv + a_{0,2} v^2\]
and the discriminant is $4 a_{2,0} a_{0,2} - a_{1,1}^2$.
The equations of $\Obar_5$ are obtained by taking polarizations of such invariant with respect to the inclusion above together with the equations of the ``generic degenerate orbit'' $\Obar_7$. The Betti table for the resolution is
\[\begin{matrix}
&0&1&2&3&4&5&6&7\\\text{total:}&1&72&297&530&488&223&42&3\\\text{0:}&1&\text{.}&\text{.}&\text{.}&\text{.}&\text{.}&\text{.}&\text{.}\\\text{1:}&\text{.}&\text{.}&\text{.}&\text{.}&\text{.}&\text{.}&\text{.}&\text{.}\\\text{2:}&\text{.}&36&99&90&20&\text{.}&\text{.}&\text{.}\\\text{3:}&\text{.}&36&198&440&468&189&6&\text{.}\\\text{4:}&\text{.}&\text{.}&\text{.}&\text{.}&\text{.}&34&36&\text{.}\\\text{5:}&\text{.}&\text{.}&\text{.}&\text{.}&\text{.}&\text{.}&\text{.}&3\\\end{matrix}\]
It follows that $\Obar_5$ is \CM{}.

\subsubsection{The orbit $\OO_4$}\label{sec:E64O4}
The orbit closure $\Obar_4$ is normal with rational singularities. It is both $E$-degenerate and $F$-$H$-degenerate with equations given by the $2\times 2$ minors of the generic matrix of a linear map $E\rightarrow F^* \otimes H^*$ together with the coefficients of the determinant of a generic $3\times 3$ matrix of linear forms in two variables.
More explicitly, the former are the equations of $\Obar_6$ while the latter are the coefficients $a_{3,0}, a_{2,1}, a_{1,2}, a_{0,3}$ of $\det (\delta)$ as introduced in \ref{E64O16}. The Betti table for the resolution is
\[\begin{matrix}
&0&1&2&3&4&5&6&7&8&9\\\text{total:}&1&40&195&450&588&546&384&171&44&5\\\text{0:}&1&\text{.}&\text{.}&\text{.}&\text{.}&\text{.}&\text{.}&\text{.}&\text{.}&\text{.}\\\text{1:}&\text{.}&36&168&378&504&420&216&63&8&\text{.}\\\text{2:}&\text{.}&4&27&72&84&\text{.}&\text{.}&\text{.}&\text{.}&\text{.}\\\text{3:}&\text{.}&\text{.}&\text{.}&\text{.}&\text{.}&126&168&108&36&5\\\end{matrix}\]
It follows that $\Obar_4$ is Cohen-Macaulay.

\subsubsection{The orbits $\OO_3$ and $\OO_2$}\label{sec:E64O3}
We discuss the case of the orbit $\OO_3$ since $\OO_2$ is isomorphic under the involution exchanging $F$ and $H$. The orbit closure $\Obar_3$ is normal with rational singularities. It is $F$-degenerate and its equations are the $2\times 2$ minors of the generic matrix of a linear map $F\rightarrow E^* \otimes H^*$ (see \ref{sec:E64O11}). As such $\Obar_3$ is a determinantal variety and the coordinate ring is resolved by Lascoux's resolution \cite{MR1988690}. Here is the Betti table for the resolution
\[\begin{matrix}
&0&1&2&3&4&5&6&7&8&9&10\\\text{total:}&1&45&230&540&823&1015&1035&760&351&90&10\\\text{0:}&1&\text{.}&\text{.}&\text{.}&\text{.}&\text{.}&\text{.}&\text{.}&\text{.}&\text{.}&\text{.}\\\text{1:}&\text{.}&45&230&540&648&385&90&\text{.}&\text{.}&\text{.}&\text{.}\\\text{2:}&\text{.}&\text{.}&\text{.}&\text{.}&175&630&945&760&351&90&10\\\end{matrix}\]
It follows that $\Obar_3$ is Cohen-Macaulay.

\subsubsection{The orbit $\OO_1$}\label{sec:E64O1}
The orbit closure $\Obar_1$ is normal with rational singularities. It is both $E$-degenerate and $F$-$H$-degenerate with equations given by the $2\times 2$ minors of the generic matrix of a linear map $E\rightarrow F^* \otimes H^*$ together with the coefficients of the $2\times 2$ minors of a generic $3\times 3$ matrix of linear forms in two variables.
More explicitly, the former are the equations of $\Obar_6$ while the latter are the coefficients of the $2\times 2$ minors of $\delta$ as introduced in \ref{E64O16}. The Betti table for the resolution is
\[\begin{matrix}
&0&1&2&3&4&5&6&7&8&9&10&11&12\\\text{total:}&1&63&394&1179&2087&2692&3726&4383&3275&1530&407&45&2\\\text{0:}&1&\text{.}&\text{.}&\text{.}&\text{.}&\text{.}&\text{.}&\text{.}&\text{.}&\text{.}&\text{.}&\text{.}&\text{.}\\\text{1:}&\text{.}&63&394&1179&1980&1702&396&63&8&\text{.}&\text{.}&\text{.}&\text{.}\\\text{2:}&\text{.}&\text{.}&\text{.}&\text{.}&107&990&3330&4320&3267&1530&407&36&\text{.}\\\text{3:}&\text{.}&\text{.}&\text{.}&\text{.}&\text{.}&\text{.}&\text{.}&\text{.}&\text{.}&\text{.}&\text{.}&9&2\\\end{matrix}\]
It follows that $\Obar_1$ is Cohen-Macaulay.
\section{Representations of type $F_4$}\label{type_F4}
In this section, we will analyze the cases corresponding to gradings on the simple Lie algebra of type $F_4$. Each case corresponds to the choice of a distinguished node on the Dynkin diagram for $F_4$. The nodes are numbered according to the conventions in Bourbaki \cite{MR0240238}.
\begin{center}
\begin{tikzpicture}[scale=0.8]
	\singleline{(0,0)}
	\doublelinearrowright{(2,0)}
	\singleline{(4,0)}
	\rootnode{(0,0)}{$\alpha_1$}
	\rootnode{(2,0)}{$\alpha_2$}
	\rootnode{(4,0)}{$\alpha_3$}
	\rootnode{(6,0)}{$\alpha_4$}
\end{tikzpicture}
\end{center}

\subsection{The case $(F_{4}, \alpha_1)$}\label{F41}

The representation is $V=V(\omega_3,C_3)$, the third fundamental representation of the group $\Sp (6,\CC)$; the group acting is $\CC^\times \times \Sp (6,\CC)$. The corresponding polynomial ring is
\begin{align*}
A=\CC [ & x_{1342},x_{1242},x_{1232},x_{1231},x_{1222},x_{1221},x_{1122},\\
& x_{1220},x_{1121},x_{1120},x_{1111},x_{1110},x_{1100},x_{1000}] = \Sym (V^*).
\end{align*}
The variables in $A$ are indexed by the roots in $\gg_1 = V$. The variables are weight vectors in $V^*$, with the following weights:
\[
\begingroup
\renewcommand*{\arraystretch}{1.1}
\begin{array}{lcl}
x_{1342} \leftrightarrow \epsilon_1 + \epsilon_2 + \epsilon_3 & & x_{1242} \leftrightarrow \epsilon_1 + \epsilon_2 - \epsilon_3\\
x_{1232} \leftrightarrow \epsilon_1 & & x_{1231} \leftrightarrow \epsilon_2\\
x_{1222} \leftrightarrow \epsilon_1 - \epsilon_2 + \epsilon_3 & & x_{1221} \leftrightarrow \epsilon_3\\
x_{1122} \leftrightarrow \epsilon_1 - \epsilon_2 - \epsilon_3 & & x_{1220} \leftrightarrow - \epsilon_1 + \epsilon_2 + \epsilon_3\\
x_{1121} \leftrightarrow - \epsilon_3 & & x_{1120} \leftrightarrow - \epsilon_1 + \epsilon_2 - \epsilon_3\\
x_{1111} \leftrightarrow - \epsilon_2 & & x_{1110} \leftrightarrow - \epsilon_1\\
x_{1100} \leftrightarrow - \epsilon_1 - \epsilon_2 + \epsilon_3 & & x_{1000} \leftrightarrow - \epsilon_1 - \epsilon_2 - \epsilon_3
\end{array}
\endgroup
\]
Here $\epsilon_1,\epsilon_2,\epsilon_3$ are the vectors in the coordinate basis of $\mathbb{R}^3$ (see \cite[\S12.1]{Humphreys:1978fk} for more information).

In characteristic zero, the representation has the following orbits, listed along with the dimension of the closure and a representative:
\begin{center}
\begin{tabular}{ccc}
orbit & dimension & representative\\ \hline
$\OO_0$ & 0& 0\\
$\OO_1$ & 7& $x_{1000}=1$\\
$\OO_2$ & 10& $x_{1111}=1$\\
$\OO_3$ & 13& $x_{1000}=x_{1231}=1$\\
$\OO_4$ & 14& $x_{1000}=x_{1342}=1$
\end{tabular}
\end{center}

The orbit closures, in particular their defining equations, have been discussed in \cite{MR2152707}. The treatment we give here follows a similar approach as the one used in the previous sections.
All the orbit closures are normal, Cohen-Macaulay, and have rational singularities. Here is the containment and singularity table:
\begin{center}
\begin{tabular}{c|c|c|c|c|c|}
& $\Obar_0$ & $\Obar_1$ & $\Obar_2$ & $\Obar_3$ & $\Obar_4$\\ \hline \hline
$\OO_0$ & ns & s & s & s & ns\\ \hline
$\OO_1$ & & ns & s & s & ns\\ \hline
$\OO_2$ & & & ns & s & ns\\ \hline
$\OO_3$ & & & & ns & ns\\ \hline
$\OO_4$ & & & & & ns\\ \hline
\end{tabular}
\end{center}

We will denote by $V_{\omega}$ the highest weight module with highest weight $\omega$. The weights will be expressed as linear combinations of $\omega_1$, $\omega_2$, $\omega_3$, the fundamental weights of the root system $C_3$.

Let $F=V_{\omega_1} = \CC^6$ be the standard representation of $\Sp (6,\CC)$. Since $\bigwedge^3 F \cong F \oplus V$, there is a projection $\phi : \bigwedge^3 F \rightarrow V$ of $\Sp (6,\CC)$-modules. Take $\psi : V \rightarrow \bigwedge^3 F$, to be a section of $\phi$, so $\phi\circ \psi = \id$. Also, let $\delta : F \rightarrow F^*$ be the duality given by the symplectic form on $F$.

Next we observe that $A_2 = \Sc_2 V^* \cong \Sc_2 F^* \oplus V_{2\omega_2}^*$. Therefore there exists a non zero map $\rho : \Sc_2 F^* \rightarrow A_2$ of $\Sp (6,\CC)$-modules. We explain here one possible way to write such a map explicitly, in terms of well understood equivariant maps:
\begin{center}
  \begin{tikzpicture}
    \matrix (m) [matrix of math nodes, row sep=20pt, column sep=42pt,
    text height=1.5ex, text depth=0.25ex, ampersand replacement=\&]
    {\Sc_2 F^*\\
    F^* \otimes F^*\\
    F \otimes F^*\\
    \bigwedge^5 F^* \otimes F^*\\
    \bigwedge^3 F^* \otimes \bigwedge^2 F^* \otimes F^*\\
    \bigwedge^3 F^* \otimes \bigwedge^3 F^*\\
    V^* \otimes V^*\\
    \Sc_2 V^*\\};
    \path[->,semithick,font=\scriptsize]
    (m-1-1) edge node[auto] {$\Delta$} (m-2-1)
    (m-2-1) edge node[auto] {$\delta^{-1}$} (m-3-1)
    (m-3-1) edge node[auto] {${}^*$} (m-4-1)
    (m-4-1) edge node[auto] {$\Delta$} (m-5-1)
    (m-5-1) edge node[auto] {$m_{2,3}$} (m-6-1)
    (m-6-1) edge node[auto] {$\psi^* \otimes \psi^*$} (m-7-1)
    (m-7-1) edge node[auto] {$m_{1,2}$} (m-8-1);
  \end{tikzpicture}
\end{center}
This map will be used in the description of the orbit closures for the case $(F_4,\alpha_1)$.

\subsubsection{The orbit $\OO_3$}
The orbit closure $\Obar_3$ is a hypersurface defined by an invariant of degree 4 which can be obtained as follows:
\begin{center}
  \begin{tikzpicture}
    \matrix (m) [matrix of math nodes, row sep=20pt, column sep=42pt,
    text height=1.5ex, text depth=0.25ex, ampersand replacement=\&]
    {\CC\\
    \Sc_2 F \otimes \Sc_2 F^*\\
    \Sc_2 F^* \otimes \Sc_2 F^*\\
    A_2 \otimes A_2\\
    A_4\\};
    \path[->,semithick,font=\scriptsize]
    (m-1-1) edge node[auto] {$\tr^{(2)}$} (m-2-1)
    (m-2-1) edge node[auto] {$\Sc_2 (\delta)$} (m-3-1)
    (m-3-1) edge node[auto] {$\rho\otimes \rho$} (m-4-1)
    (m-4-1) edge node[auto] {} (m-5-1);
  \end{tikzpicture}
\end{center}
where the last map is symmetric multiplication. Another description of this invariant was given by Landsberg and Manivel \cite{MR1832903}.

\subsubsection{The orbit $\OO_2$}
The expected resolution for the coordinate ring $\CC [\Obar_2]$ is
\begin{gather*}
	A \leftarrow V_{\omega_3} \otimes A(-3) \leftarrow V_{2\omega_1} \otimes A(-4) \leftarrow\\
	\leftarrow V_{\omega_2} \otimes A(-6) \leftarrow V_{\omega_1} \otimes A(-7) \leftarrow 0
\end{gather*}
The differential $d_2 : V_{2\omega_1} \otimes A(-4) \rightarrow V_{\omega_3} \otimes A(-3)$ was written explicitly, as follows:
\begin{center}
  \begin{tikzpicture}
    \matrix (m) [matrix of math nodes, row sep=20pt, column sep=42pt,
    text height=1.5ex, text depth=0.25ex, ampersand replacement=\&]
    {\Sc_2 F^*\\
    F^* \otimes F^*\\
    F \otimes F^*\\
    \bigwedge^5 F^* \otimes F^*\\
    \bigwedge^3 F^* \otimes \bigwedge^2 F^* \otimes F^*\\
    \bigwedge^3 F^* \otimes \bigwedge^3 F^*\\
    V^* \otimes A_1\\};
    \path[->,semithick,font=\scriptsize]
    (m-1-1) edge node[auto] {$\Delta$} (m-2-1)
    (m-2-1) edge node[auto] {$\delta^{-1}$} (m-3-1)
    (m-3-1) edge node[auto] {${}^*$} (m-4-1)
    (m-4-1) edge node[auto] {$\Delta$} (m-5-1)
    (m-5-1) edge node[auto] {$m_{2,3}$} (m-6-1)
    (m-6-1) edge node[auto] {$\psi^* \otimes \psi^*$} (m-7-1);
  \end{tikzpicture}
\end{center}
The Betti table for the resolution is
\[\begin{matrix}
     &0&1&2&3&4\\
     \text{total:}&1&14&21&14&6\\\text{0:}&1&\text{.}&\text{.}&\text{.}&\text{.}\\
     \text{1:}&\text{.}&\text{.}&\text{.}&\text{.}&\text{.}\\
     \text{2:}&\text{.}&14&21&\text{.}&\text{.}\\
     \text{3:}&\text{.}&\text{.}&\text{.}&14&6\\
     \end{matrix}
\]

\subsubsection{The orbit $\OO_1$}
The expected resolution for the coordinate ring $\CC [\Obar_2]$ is
\begin{gather*}
	A \leftarrow V_{2\omega_1} \otimes A(-2) \leftarrow V_{\omega_1 +\omega_2} \otimes A(-3) \leftarrow V_{\omega_1 +\omega_3} \otimes A(-4) \leftarrow\\
	\leftarrow V_{\omega_1 +\omega_3} \otimes A(-6) \leftarrow V_{\omega_1 +\omega_2} \otimes A(-7) \leftarrow V_{2\omega_1} \otimes A(-8) \leftarrow A(-10) \leftarrow 0
\end{gather*}
The first differential is precisely the map $\rho$ described in the introduction to this case. The Betti table for the resolution is
\[
\begin{matrix}
&0&1&2&3&4&5&6&7\\
\text{total:}&1&21&64&70&70&64&21&1\\
\text{0:}&1&\text{.}&\text{.}&\text{.}&\text{.}&\text{.}&\text{.}&\text{.}\\
\text{1:}&\text{.}&21&64&70&\text{.}&\text{.}&\text{.}&\text{.}\\
\text{2:}&\text{.}&\text{.}&\text{.}&\text{.}&70&64&21&\text{.}\\
\text{3:}&\text{.}&\text{.}&\text{.}&\text{.}&\text{.}&\text{.}&\text{.}&1\\
\end{matrix}
\]
It follows that the orbit closure $\Obar_1$ is Gorenstein.

\subsection{The case $(F_{4}, \alpha_2)$}
\label{F42}

The representation is $E \otimes \Sc_{2} F$, where $E=\CC^2$ and
$F=\CC^3$; the group acting is $\SL (E) \times \SL (F) \times
\CC^\times$. The corresponding polynomial ring is
\[A=\CC [x_{i;jk} | i=1,2; 1 \leq j \leq k \leq 3]= \Sym \left(
  \textstyle E^* \otimes\Sc_{2} F^* \right).\]

In characteristic zero, the representation has the following orbits, listed along with the dimension of the closure and a representative:
\begin{center}
\begin{tabular}{ccc}
orbit & dimension & representative\\ \hline
$\OO_0$ & 0& 0\\
$\OO_1$ & 4& $x_{1;11} = 1$\\
$\OO_2$ & 6& $x_{1;12} = 1$\\
$\OO_3$ & 7& $x_{1;11} = x_{1;23} = 1$\\
$\OO_4$ & 8& $x_{1;11} = x_{2;22} = 1$\\
$\OO_5$ & 8& $x_{1;23} = x_{2;13} = 1$\\
$\OO_6$ & 9& $x_{1;11} = x_{2;22} = x_{123} = 1$\\
$\OO_7$ & 10& $x_{1;11} = x_{2;23} = 1$\\
$\OO_8$ & 10& $x_{2;13} = x_{1;23} = x_{1;11} = 1$\\
$\OO_9$ & 11& $\langle x_{1;11}, x_{1;12}, x_{2;13}, x_{1;22}, x_{2;23}\rangle$\\
$\OO_{10}$ & 12& $x_{1;11} = x_{2;11} = x_{1;22} =  x_{1;33} = 1, x_{2;22} = -1$
\end{tabular}
\end{center}
To obtain a representative for the orbit $\OO_9$ it is enough to assign random rational values to the variables listed, and to set all other variables to 0 (this amounts to taking a generic element in the span of the tensors corresponding to the listed variables).

Here is the containment and singularity table:
\begin{center}
\begin{tabular}{c|c|c|c|c|c|c|c|c|c|c|c|}
& $\Obar_0$ & $\Obar_1$ & $\Obar_2$ & $\Obar_3$ & $\Obar_4$ & $\Obar_5$ & $\Obar_6$ & $\Obar_7$ & $\Obar_8$ & $\Obar_9$ & $\Obar_{10}$\\ \hline \hline
$\OO_0$ & ns & s & s & s & s & s & s & s & s & s & ns\\ \hline
$\OO_1$ & & ns & s & ns & s & s & s & s & s & s & ns\\ \hline
$\OO_2$ & & & ns & ns & ns & s & s & s & s & s & ns\\ \hline
$\OO_3$ & & & & ns & & & s & s & s & s & ns\\ \hline
$\OO_4$ & & & & & ns & & s & s & s & s & ns\\ \hline
$\OO_5$ & & & & & & ns & & & s & s & ns\\ \hline
$\OO_6$ & & & & & & & ns & ns & s & s & ns\\ \hline
$\OO_7$ & & & & & & & & ns & & s & ns\\ \hline
$\OO_8$ & & & & & & & & & ns & s & ns\\ \hline
$\OO_9$ & & & & & & & & & & ns & ns\\ \hline
$\OO_{10}$ & & & & & & & & & & & ns\\ \hline
\end{tabular}
\end{center}
We denote the free $A$-module $\Sc_{(a,b)} E^* \otimes \Sc_{(c,d,e)} F^* \otimes A(-a-b)$ by $(a,b;c,d,e)$.

\subsubsection{The orbit $\OO_9$}\label{F42O9}

The orbit closure $\Obar_9$ is not normal. The expected 
resolution for the coordinate ring of the normalization of
$\Obar_9$ is
\[
A \oplus (1, 1; 2, 1, 1) \oplus (2, 1; 2, 2, 2) \leftarrow (2, 2; 4, 2, 2) \leftarrow 0
\]
This orbit closure is a hypersurface, so there is only one differential $d_1$. The defining equation of $\Obar_9$ is the determinant of $d_1$. Alternatively this equation can be obtained as the discriminant of the determinant of a generic $3\times 3$ symmetric matrix of linear forms in two variables, which is a homogeneous polynomial of degree 12. Explicitly:
	\begin{align*}
	\delta  & = \det
	\begin{pmatrix}
	u x_{111} + v x_{211} & u x_{112} + v x_{212} & u x_{113} + v x_{213}\\
	u x_{112} + v x_{212} & u x_{122} + v x_{222} & u x_{123} + v x_{223}\\
	u x_{113} + v x_{213} & u x_{123} + v x_{223} & u x_{133} + v x_{233}
	\end{pmatrix} =\\
	& = a_{3,0} u^3 + a_{2,1} u^2 v + a_{1,2} u v^2 + a_{0,3} v^3,
	\end{align*}
and
	\[\disc (\delta) = 27 a_{3,0}^2 a_{1,2}^2 + 4 a_{3,0} a_{1,2}^3 + 4 a_{2,1}^3 a_{0,3} - a_{2,1}^2 a_{1,2}^2 -18 a_{3,0} a_{2,1} a_{1,2} a_{0,3}.\]
The block $(2, 2; 4, 2, 2) \rightarrow (2, 1; 2, 2, 2)$ of $d_1$ was constructed as the map
\begin{center}
  \begin{tikzpicture}
    \matrix (m) [matrix of math nodes, row sep=10pt, column sep=30pt,
    text height=1.5ex, text depth=0.25ex, ampersand replacement=\&]
    {\Sc_2 F^* \& E \otimes E^* \otimes \Sc_2 F^* \& E \otimes A_1\\};
    \path[->,semithick,font=\scriptsize]
    (m-1-1) edge node[auto] {$\tr^{(1)}$} (m-1-2);
    \path[right hook->,semithick,font=\scriptsize]
    (m-1-2) edge node[auto] {} (m-1-3);
  \end{tikzpicture}
\end{center}
The block $(2, 2; 4, 2, 2) \rightarrow (1, 1; 2, 1, 1)$ was constructed by taking the map
\begin{center}
  \begin{tikzpicture}
    \matrix (m) [matrix of math nodes, row sep=20pt, column sep=35pt,
	text height=1.5ex, text depth=0.25ex, ampersand replacement=\&]
    {\bigwedge^3 F^* \otimes \Sc_2 F^* \\
    F^* \otimes F^* \otimes F^* \otimes F^* \otimes F^* \\
    F^* \otimes \Sc_2 F^* \otimes \Sc_2 F^*\\
    F^* \otimes \bigwedge^2 \left( \Sc_2 F^* \right)\\};
    \path[->,semithick,font=\scriptsize]
    (m-1-1) edge node[auto] {$\Delta \otimes \Delta$} (m-2-1)
    (m-2-1) edge node[auto] {$m_{2,4} \otimes m_{3,5}$} (m-3-1)
    (m-3-1) edge node[auto] {$m_{2,3}$} (m-4-1);
  \end{tikzpicture}
\end{center}
on the $F^*$ factor and then applying the embedding $\bigwedge^2 E^* \otimes \bigwedge^2 \left( \Sc_2 F^* \right) \hookrightarrow A_2$ given by
\[
e_1^* \wedge e_2^* \otimes f_i^* f_j^* \wedge f_k^* f_l^* \longmapsto \det
	\begin{pmatrix}
	x_{1ij} & x_{1kl}\\
	x_{2ij} & x_{2kl}
	\end{pmatrix}.
\]
The last block $(2, 2; 4, 2, 2) \rightarrow A$ can be obtained as follows. First construct the map
\begin{center}
  \begin{tikzpicture}
    \matrix (m) [matrix of math nodes, row sep=20pt, column sep=35pt,
	text height=1.5ex, text depth=0.25ex, ampersand replacement=\&]
    {\bigwedge^3 F^* \otimes \bigwedge^3 F^* \otimes \Sc_2 F^*\\
    F^* \otimes F^* \otimes F^* \otimes F^* \otimes F^* \otimes F^* \otimes F^* \otimes F^*\\
    \Sc_2 F^* \otimes \Sc_2 F^* \otimes \Sc_2 F^* \otimes \Sc_2 F^*\\
    \bigwedge^2 \left( \Sc_2 F^* \right) \otimes \bigwedge^2 \left( \Sc_2 F^* \right)\\};
    \path[->,semithick,font=\scriptsize]
    (m-1-1) edge node[auto] {$\Delta \otimes \Delta \otimes \Delta$} (m-2-1)
    (m-2-1) edge node[auto] {$m_{1,7} \otimes m_{2,4} \otimes m_{3,5} \otimes m_{6,8}$} (m-3-1)
    (m-3-1) edge node[auto] {$m_{1,2} \otimes m_{3,4}$} (m-4-1);
  \end{tikzpicture}
\end{center}
on the $F^*$ factor. Then embed into $A$ via the map
\begin{center}
	\begin{tikzpicture}
	\matrix (m) [matrix of math nodes, row sep=20pt, column sep=35pt,
	text height=1.5ex, text depth=0.25ex, ampersand replacement=\&]
	{\bigwedge^2 E^* \otimes \bigwedge^2 \left( \Sc_2 F^* \right) \otimes \bigwedge^2 E^* \otimes \bigwedge^2 \left( \Sc_2 F^* \right) \\
	A_2 \otimes A_2\\
	A_4\\};
	\path[->,semithick,font=\scriptsize]
	(m-1-1) edge node[auto] {} (m-2-1)
	(m-2-1) edge node[auto] {} (m-3-1);
	\end{tikzpicture}
\end{center}
where the first step uses the embedding described earlier twice and the second step is symmetric multiplication.

The Betti table for the resolution of the normalization is
\[\begin{matrix}
&0&1\\\text{total:}&6&6\\\text{0:}&1&\text{.}\\\text{1:}&\text{.}&\text{.}\\\text{2:}&3&\text{.}\\\text{3:}&2&6\\\end{matrix}
\]
and the Betti table for the resolution of the cokernel $C(9)$ of the inclusion $\CC [\Obar_9] \hookrightarrow \CC [\mathcal{N} (\Obar_9)]$ is
\[\begin{matrix}
&0&1&2\\\text{total:}&5&6&1\\\text{2:}&3&\text{.}&\text{.}\\\text{3:}&2&6&\text{.}\\\text{4:}&\text{.}&\text{.}&\text{.}\\\text{5:}&\text{.}&\text{.}&\text{.}\\\text{6:}&\text{.}&\text{.}&\text{.}\\\text{7:}&\text{.}&\text{.}&\text{.}\\\text{8:}&\text{.}&\text{.}&\text{.}\\\text{9:}&\text{.}&\text{.}&\text{.}\\\text{10:}&\text{.}&\text{.}&1\\\end{matrix}
\]

\subsubsection{The orbit $\OO_8$}

The orbit closure $\Obar_8$ is not normal. The expected resolution for the coordinate ring of the normalization $\mathcal{N} (\Obar_8)$ is
\[
  A \oplus (1,1;2,1,1) \leftarrow (3,2;4,3,3) \oplus
  (2,2;4,2,2) \leftarrow (3,3;5,4,3) \leftarrow 0
\]
We construct explicitly the differential $d_2$ as follows. The first block $(3,3;5,4,3) \rightarrow (3,2;4,3,3)$ is defined on the $F^*$ factor as
\begin{center}
  \begin{tikzpicture}
    \matrix (m) [matrix of math nodes, row sep=10pt, column sep=50pt,
    text height=1.5ex, text depth=0.25ex, ampersand replacement=\&]
    {F^* \otimes F \& F^* \otimes F \otimes F^* \otimes F \& \Sc_2 F^* \otimes \bigwedge^2 F \\};
    \path[->,semithick,font=\scriptsize]
    (m-1-1) edge node[auto] {$\tr^{(1)}$} (m-1-2);
    \path[right hook->,semithick,font=\scriptsize]
    (m-1-2) edge node[auto] {$m_{1,3} \otimes m_{2,4}$} (m-1-3);
  \end{tikzpicture}
\end{center}
restricted to the subspace of traceless tensors $\Sc_{(2,1)} F^*$. On the $E^*$ factor, it is simply the trace map $\CC \rightarrow E^* \otimes E$, and putting the factors together we have
\begin{center}
  \begin{tikzpicture}
    \matrix (m) [matrix of math nodes, row sep=10pt, column sep=30pt,
    text height=1.5ex, text depth=0.25ex, ampersand replacement=\&]
    {\Sc_{(2,1)} F^* \& E^* \otimes E \otimes \Sc_2 F^* \otimes \bigwedge^2 F \& E \otimes \bigwedge^2 F \otimes A_1\\};
    \path[->,semithick,font=\scriptsize]
    (m-1-1) edge node[auto] {} (m-1-2);
    \path[right hook->,semithick,font=\scriptsize]
    (m-1-2) edge node[auto] {} (m-1-3);
  \end{tikzpicture}
\end{center}
The second block $(3,3;5,4,3) \rightarrow (2,2;4,2,2)$ is defined on the $F^*$ factor as
\begin{center}
  \begin{tikzpicture}
    \matrix (m) [matrix of math nodes, row sep=20pt, column sep=35pt,
	text height=1.5ex, text depth=0.25ex, ampersand replacement=\&]
    { F^* \otimes F \& \\
    F^* \otimes F \otimes F^* \otimes F \otimes F^* \otimes F \otimes F^* \otimes F \& \\
    \Sc_2 F^* \otimes \bigwedge^2 F \otimes \bigwedge^2 F \otimes \Sc_2 F^* \& \\
    \bigwedge^2 \left( \Sc_2 F^* \right) \otimes F^* \otimes F^* \&\\
    \bigwedge^2 \left( \Sc_2 F^* \right) \otimes \Sc_2 F^* \\ };
    \path[->,semithick,font=\scriptsize]
    (m-1-1) edge node[auto] {$\tr^{(1)} \otimes \tr^{(1)} \otimes \tr^{(1)}$} (m-2-1)
    (m-2-1) edge node[auto] {$m_{1,3} \otimes m_{2,8} \otimes m_{4,6} \otimes m_{5,7}$} (m-3-1)
    (m-3-1) edge node[auto] {$m_{1,4} \otimes {}^* \otimes {}^*$} (m-4-1)
    (m-4-1) edge node[auto] {$m_{2,3}$} (m-5-1);
  \end{tikzpicture}
\end{center}
restricted to the subspace of traceless tensors $\Sc_{(2,1)} F^*$. On the $E^*$ factor, we only have $\bigwedge^2 E^*$, so taking the factors together we get the map
\begin{center}
  \begin{tikzpicture}
    \matrix (m) [matrix of math nodes, row sep=10pt, column sep=30pt,
    text height=1.5ex, text depth=0.25ex, ampersand replacement=\&]
    {\Sc_{(2,1)} F^* \& \bigwedge^2 E^* \otimes \bigwedge^2 \left( \Sc_2 F^* \right) \otimes \Sc_2 F^* \& \Sc_2 F^* \otimes A_2\\};
    \path[->,semithick,font=\scriptsize]
    (m-1-1) edge node[auto] {} (m-1-2);
    \path[right hook->,semithick,font=\scriptsize]
    (m-1-2) edge node[auto] {} (m-1-3);
  \end{tikzpicture}
\end{center}
where the embedding into $A_2$ is the one described in \ref{F42O9}.
The Betti table for the normalization is
\[\begin{matrix}
&0&1&2\\\text{total:}&4&12&8\\\text{0:}&1&\text{.}&\text{.}\\\text{1:}&\text{.}&\text{.}&\text{.}\\\text{2:}&3&\text{.}&\text{.}\\\text{3:}&\text{.}&6&\text{.}\\\text{4:}&\text{.}&6&8\\\end{matrix}
\]
Dropping the row with entries of degree 4 and 5 in the differential $d_1$, we obtain a map
\[(3,2;4,3,3) \oplus (2,2;4,2,2) \rightarrow (1,1;2,1,1)\]
This is a presentation for the cokernel $C(8)$ of the inclusion $\CC [\Obar_8] \hookrightarrow \CC [\mathcal{N} (\Obar_8)]$; the Betti table for $C(8)$ is
\[\begin{matrix}
&0&1&2&3\\\text{total:}&3&12&11&2\\\text{0:}&3&\text{.}&\text{.}&\text{.}\\\text{1:}&\text{.}&6&\text{.}&\text{.}\\\text{2:}&\text{.}&6&11&\text{.}\\\text{3:}&\text{.}&\text{.}&\text{.}&\text{.}\\\text{4:}&\text{.}&\text{.}&\text{.}&2\\\end{matrix}
\]

\begin{remark}
In this particular example, the (truncated) cone procedure in M2 did not produce a result in a reasonable time. However it is easy to determine the representations appearing in the resolution of $C(8)$, namely:
\begin{gather*}
  (1,1;2,1,1) \leftarrow (3,2;4,3,3) \oplus
  (2,2;4,2,2) \leftarrow\\
  \leftarrow (3,3;5,4,3) \oplus (4,2;4,4,4) \leftarrow (5,4;6,6,6) \leftarrow 0
\end{gather*}
From this and the resolution of $\CC [\mathcal{N} (\Obar_8)]$, it is possible to construct and minimize the mapping cone by hand. The resulting complex is
\[(0,0;0,0,0)\leftarrow (4,2;4,4,4)\leftarrow (5,4;6,6,6) \leftarrow 0\]
The last map in the complex can be built as follows:
\begin{center}
  \begin{tikzpicture}
    \matrix (m) [matrix of math nodes, row sep=20pt, column sep=35pt,
	text height=1.5ex, text depth=0.25ex, ampersand replacement=\&]
    { E^* \otimes \bigwedge^2 E^* \otimes \bigwedge^2 E^* \otimes \bigwedge^3 F^* \otimes \bigwedge^3 F^* \& \\
    E^* \otimes E^* \otimes E^* \otimes E^* \otimes E^* \otimes F^* \otimes F^* \otimes F^* \otimes F^* \otimes F^* \otimes F^* \& \\
    \Sc_2 E^* \otimes E^* \otimes E^* \otimes E^* \otimes \Sc_2 F^* \otimes \Sc_2 F^* \otimes \Sc_2 F^* \& \\
    \Sc_2 E^* \otimes A_1 \otimes A_1 \otimes A_1 \&\\
    \Sc_2 E^* \otimes A_3 \\ };
    \path[->,semithick,font=\scriptsize]
    (m-1-1) edge node[auto] {$\Delta \otimes \Delta \otimes \Delta \otimes \Delta$} (m-2-1)
    (m-2-1) edge node[auto] {$m_{2,4} \otimes m_{6,9} \otimes m_{7,10} \otimes m_{8,11}$} (m-3-1)
    (m-3-1) edge node[auto] {$m_{2,5} \otimes m_{3,6} \otimes m_{4,7}$} (m-4-1)
    (m-4-1) edge node[auto] {$m_{2,3}$} (m-5-1);
  \end{tikzpicture}
\end{center}
and then resolved as usual to obtain the defining equations of $\Obar_8$.
\end{remark}

The Betti for the resolution of $\CC [\Obar_8]$ is
\[\begin{matrix}
&0&1&2\\\text{total:}&1&3&2\\\text{0:}&1&\text{.}&\text{.}\\\text{1:}&\text{.}&\text{.}&\text{.}\\\text{2:}&\text{.}&\text{.}&\text{.}\\\text{3:}&\text{.}&\text{.}&\text{.}\\\text{4:}&\text{.}&\text{.}&\text{.}\\\text{5:}&\text{.}&3&\text{.}\\\text{6:}&\text{.}&\text{.}&\text{.}\\\text{7:}&\text{.}&\text{.}&2\\\end{matrix}\]
It follows that $\Obar_8$ is Cohen-Macaulay.

\subsubsection{The orbit $\OO_7$}

The orbit closure $\Obar_7$ is not normal. The expected resolution for the coordinate ring of the normalization $\mathcal{N} (\Obar_7)$ is
\[
  A \oplus (1,1;2,2,0) \leftarrow (2,1;3,2,1) \leftarrow (3,1;3,3,2) \leftarrow 0
\]
We construct the differential $d_2 : (3,1;3,3,2) \rightarrow (2,1;3,2,1)$ explicitly as follows. On the $E^*$ factor, take the diagonalization $\Sc_2 E^* \rightarrow E^* \otimes E^*$. On the $F^*$ factor, take the map
\begin{center}
  \begin{tikzpicture}
    \matrix (m) [matrix of math nodes, row sep=20pt, column sep=35pt,
	text height=1.5ex, text depth=0.25ex, ampersand replacement=\&]
    { \bigwedge^2 F^* \\
    F \otimes F^* \otimes \bigwedge^2 F^* \\
    F \otimes F^* \otimes F^* \otimes F^* \\
    F \otimes F^* \otimes \Sc_2 F^* \\ };
    \path[->,semithick,font=\scriptsize]
    (m-1-1) edge node[auto] {$\tr^{(1)}$} (m-2-1)
    (m-2-1) edge node[auto] {$\Delta$} (m-3-1)
    (m-3-1) edge node[auto] {$m_{2,3}$} (m-4-1);
  \end{tikzpicture}
\end{center}
then project the first two factors onto the space of traceless tensors $\Sc_{(2,1)} F^*$ via the map $F \otimes F^* \rightarrow F \otimes F^* / \im (\tr^{(1)})$. Altogether we have
\begin{center}
  \begin{tikzpicture}
    \matrix (m) [matrix of math nodes, row sep=10pt, column sep=20pt,
    text height=1.5ex, text depth=0.25ex, ampersand replacement=\&]
    {\Sc_2 E^* \otimes \bigwedge^2 F^* \& E^* \otimes E^* \otimes \Sc_{(2,1)} F^* \otimes \Sc_2 F^* \& E^* \otimes \Sc_{(2,1)} F^* \otimes A_1\\};
    \path[->,semithick,font=\scriptsize]
    (m-1-1) edge node[auto] {} (m-1-2);
    \path[right hook->,semithick,font=\scriptsize]
    (m-1-2) edge node[auto] {} (m-1-3);
  \end{tikzpicture}
\end{center}
The Betti table for the normalization is
\[\begin{matrix}
&0&1&2\\\text{total:}&7&16&9\\\text{0:}&1&\text{.}&\text{.}\\\text{1:}&\text{.}&\text{.}&\text{.}\\\text{2:}&6&16&9\\\end{matrix}\]
Dropping the row of degree 3 in the differential $d_1$, we obtain a map
\[(2,1;3,2,1) \rightarrow (1,1;2,2,0)\]
This is a presentation for the cokernel $C(7)$ of the inclusion $\CC [\Obar_7] \hookrightarrow \CC [\mathcal{N} (\Obar_7)]$; the Betti table for $C(7)$ is
\[\begin{matrix}
&0&1&2&3&4&5\\\text{total:}&6&16&29&36&21&4\\\text{0:}&6&16&9&\text{.}&\text{.}&\text{.}\\\text{1:}&\text{.}&\text{.}&\text{.}&\text{.}&\text{.}&\text{.}\\\text{2:}&\text{.}&\text{.}&20&36&21&4\\\end{matrix}\]
By the cone procedure, we recover the resolution of $\CC [\Obar_7]$ which has the following Betti table
\[\begin{matrix}
&0&1&2&3&4\\\text{total:}&1&20&36&21&4\\\text{0:}&1&\text{.}&\text{.}&\text{.}&\text{.}\\\text{1:}&\text{.}&\text{.}&\text{.}&\text{.}&\text{.}\\\text{2:}&\text{.}&\text{.}&\text{.}&\text{.}&\text{.}\\\text{3:}&\text{.}&\text{.}&\text{.}&\text{.}&\text{.}\\\text{4:}&\text{.}&\text{.}&\text{.}&\text{.}&\text{.}\\\text{5:}&\text{.}&20&36&21&4\\\end{matrix}
\]
We observe that $\Obar_7$ is not Cohen-Macaulay because it has codimension 2 but its coordinate ring has projective dimension 4.

\subsubsection{The orbit $\OO_6$}

The orbit closure $\Obar_6$ is not normal. The expected resolution for the coordinate ring of the normalization $\mathcal{N} (\Obar_6)$ is
\begin{gather*}
  A \oplus (1,1;2,2,0) \leftarrow (2,1;3,2,1) \oplus (2,1;2,2,2) \leftarrow \\
  \leftarrow (3,3;5,4,3) \oplus (3,1;3,3,2) \leftarrow (4,3;5,5,4) \leftarrow 0
\end{gather*}
We construct the differential
\[d_1 : (2,1;3,2,1) \oplus (2,1;2,2,2) \rightarrow (1,1;2,2,0) \oplus A\]
explicitly. Notice how the domain of $d_1$ is isomorphic to $E^* \otimes F \otimes F^*$. It is clear that the representation on the $E^*$ factor is, up to a power of the determinant, simply $E^*$. On the $F^*$ factor, we have $\Sc_{(3,2,1)} F^* \oplus \Sc_{(2,2,2)} F^*$, with the first summand corresponding to the space of traceless tensors in $F\otimes F^*$ and the second factor corresponding to the tensor with non zero trace.

For the first block $E^* \otimes F \otimes F^* \rightarrow (1,1;2,2,0)$, take the map
\begin{center}
  \begin{tikzpicture}
    \matrix (m) [matrix of math nodes, row sep=20pt, column sep=35pt,
	text height=1.5ex, text depth=0.25ex, ampersand replacement=\&]
    { E^* \otimes F \otimes F^* \\
    E^* \otimes F \otimes F^* \otimes F \otimes F^* \\
    E^* \otimes \Sc_2 F \otimes \Sc_2 F^* \\
    \Sc_2 F \otimes A_1 \\ };
    \path[->,semithick,font=\scriptsize]
    (m-1-1) edge node[auto] {$\tr^{(1)}$} (m-2-1)
    (m-2-1) edge node[auto] {$m_{2,4} \otimes m_{3,5}$} (m-3-1)
    (m-3-1) edge node[auto] {} (m-4-1);
  \end{tikzpicture}
\end{center}

For the second block $E^* \otimes F \otimes F^* \rightarrow A_3$, take the map
\begin{center}
  \begin{tikzpicture}
    \matrix (m) [matrix of math nodes, row sep=20pt, column sep=35pt,
	text height=1.5ex, text depth=0.25ex, ampersand replacement=\&]
    { F \otimes F^* \otimes \bigwedge^3 F^*\\
    \bigwedge^2 F^* \otimes F^* \otimes \bigwedge^3 F^* \\
    F^* \otimes F^* \otimes F^* \otimes F^* \otimes F^* \otimes F^* \\
    \Sc_2 F^* \otimes \Sc_2 F^* \otimes \Sc_2 F^* \\
    \bigwedge^2 \left( \Sc_2 F^* \right) \otimes \Sc_2 F^* \\};
    \path[->,semithick,font=\scriptsize]
    (m-1-1) edge node[auto] {${}^*$} (m-2-1)
    (m-2-1) edge node[auto] {$\Delta \otimes \Delta$} (m-3-1)
    (m-3-1) edge node[auto] {$m_{1,4} \otimes m_{2,5} \otimes m_{3,6}$} (m-4-1)
    (m-4-1) edge node[auto] {$m_{1,2}$} (m-5-1);
  \end{tikzpicture}
\end{center}
on the $F^*$ factor. On the $E^*$ factor, simply tensor by a power of the determinant.
Altogether we have
\begin{center}
  \begin{tikzpicture}
    \matrix (m) [matrix of math nodes, row sep=10pt, column sep=30pt,
    text height=1.5ex, text depth=0.25ex, ampersand replacement=\&]
    {E^* \otimes \bigwedge^2 E^* \otimes \bigwedge^2 \left( \Sc_2 F^* \right) \otimes \Sc_2 F^* \& A_1 \otimes A_2 \& A_3\\};
    \path[right hook->,semithick,font=\scriptsize]
    (m-1-1) edge node[auto] {} (m-1-2);
    \path[->,semithick,font=\scriptsize]
    (m-1-2) edge node[auto] {} (m-1-3);
  \end{tikzpicture}
\end{center}
where the first step uses the embedding defined in \ref{F42O9} and the second step is symmetric multiplication.

The Betti table for the normalization is
\[\begin{matrix}
&0&1&2&3\\\text{total:}&7&18&17&6\\\text{0:}&1&\text{.}&\text{.}&\text{.}\\\text{1:}&\text{.}&\text{.}&\text{.}&\text{.}\\\text{2:}&6&18&9&\text{.}\\\text{3:}&\text{.}&\text{.}&\text{.}&\text{.}\\\text{4:}&\text{.}&\text{.}&8&6\\\end{matrix}\]
Dropping the row of degree 3 in the differential $d_1$, we obtain a map
\[(2,1;3,2,1) \oplus (2,1;2,2,2) \rightarrow (1,1;2,2,0)\]
This is a presentation for the cokernel $C(6)$ of the inclusion $\CC [\Obar_6] \hookrightarrow \CC [\mathcal{N} (\Obar_6)]$; the Betti table for $C(6)$ is
\[\begin{matrix}
&0&1&2&3&4\\\text{total:}&6&18&15&6&3\\\text{2:}&6&18&15&\text{.}&\text{.}\\\text{3:}&\text{.}&\text{.}&\text{.}&\text{.}&\text{.}\\\text{4:}&\text{.}&\text{.}&\text{.}&6&3\\\end{matrix}\]
By the cone procedure, we recover the resolution of $\CC [\Obar_6]$ which has the following Betti table
\[\begin{matrix}
&0&1&2&3\\\text{total:}&1&6&8&3\\\text{0:}&1&\text{.}&\text{.}&\text{.}\\\text{1:}&\text{.}&\text{.}&\text{.}&\text{.}\\\text{2:}&\text{.}&\text{.}&\text{.}&\text{.}\\\text{3:}&\text{.}&6&\text{.}&\text{.}\\\text{4:}&\text{.}&\text{.}&8&\text{.}\\\text{5:}&\text{.}&\text{.}&\text{.}&3\\\end{matrix}
\]
We observe that $\Obar_6$ is Cohen-Macaulay.

\subsubsection{The orbit $\OO_5$}

The orbit closure $\Obar_5$ is not normal. The expected resolution for the coordinate ring of the normalization $\mathcal{N} (\Obar_5)$ is
\begin{gather*}
  A \oplus (1,0;1,1,0) \leftarrow (3,0;2,2,2) \oplus (1,1;3,1,0) \oplus (2,0;2,1,1) \leftarrow \\
  \leftarrow (3,1;4,2,2) \oplus (2,2;3,3,2) \oplus (2,1;4,1,1) \leftarrow \\
  \leftarrow (3,2;5,3,2) \leftarrow (3,3;5,5,2) \leftarrow 0
\end{gather*}
We construct the differential $d_4 : (3,3;5,5,2) \rightarrow (3,2;5,3,2)$ explicitly. Start with the map
\begin{center}
  \begin{tikzpicture}
    \matrix (m) [matrix of math nodes, row sep=20pt, column sep=35pt,
	text height=1.5ex, text depth=0.25ex, ampersand replacement=\&]
    { \Sc_3 F \\
    E \otimes E^* \otimes \Sc_3 F \otimes F \otimes F^* \otimes F \otimes F^* \\
    E \otimes E^* \otimes \Sc_3 F \otimes \Sc_2 F \otimes \Sc_2 F^*\\
    E \otimes \Sc_3 F \otimes \Sc_2 F \otimes A_1\\ };
    \path[->,semithick,font=\scriptsize]
    (m-1-1) edge node[auto] {$\tr^{(1)} \otimes \tr^{(1)} \otimes \tr^{(1)}$} (m-2-1)
    (m-2-1) edge node[auto] {$m_{4,6} \otimes m_{5,7}$} (m-3-1)
    (m-3-1) edge node[auto] {} (m-4-1);
  \end{tikzpicture}
\end{center}
Then project onto $E \otimes \Sc_{(3,2)} F \otimes A_1$ modding out $\Sc_3 F \otimes \Sc_2 F$ by the image of the map
\begin{center}
  \begin{tikzpicture}
    \matrix (m) [matrix of math nodes, row sep=10pt, column sep=30pt,
    text height=1.5ex, text depth=0.25ex, ampersand replacement=\&]
    {\Sc_4 F \otimes F \& \Sc_3 F \otimes F \otimes F \& \Sc_3 F \otimes \Sc_2 F\\};
    \path[->,semithick,font=\scriptsize]
    (m-1-1) edge node[auto] {$\Delta$} (m-1-2)
    (m-1-2) edge node[auto] {$m_{2,3}$} (m-1-3);
  \end{tikzpicture}
\end{center}

The Betti table for the normalization is
\[\begin{matrix}
&0&1&2&3&4\\\text{total:}&7&28&41&30&10\\\text{0:}&1&\text{.}&\text{.}&\text{.}&\text{.}\\\text{1:}&6&24&20&\text{.}&\text{.}\\\text{2:}&\text{.}&4&21&30&10\\\end{matrix}\]
Dropping the row of degree 2 in the differential $d_1$, we obtain a map
\[(3,0;2,2,2) \oplus (1,1;3,1,0) \oplus (2,0;2,1,1) \rightarrow (1,0;1,1,0)\]
This is a presentation for the cokernel $C(5)$ of the inclusion $\CC [\Obar_5] \hookrightarrow \CC [\mathcal{N} (\Obar_5)]$; the Betti table for $C(5)$ is
\[\begin{matrix}
&0&1&2&3&4&5&6&7&8\\\text{total:}&6&24&56&163&298&285&144&33&1\\\text{1:}&6&24&20&\text{.}&\text{.}&\text{.}&\text{.}&\text{.}&\text{.}\\\text{2:}&\text{.}&\text{.}&36&42&10&\text{.}&\text{.}&\text{.}&\text{.}\\\text{3:}&\text{.}&\text{.}&\text{.}&121&288&285&144&33&\text{.}\\\text{4:}&\text{.}&\text{.}&\text{.}&\text{.}&\text{.}&\text{.}&\text{.}&\text{.}&1\\\end{matrix}\]
By the cone procedure, we recover the resolution of $\CC [\Obar_5]$ which has the following Betti table
\[\begin{matrix}
&0&1&2&3&4&5&6&7\\\text{total:}&1&19&133&288&285&144&33&1\\\text{0:}&1&\text{.}&\text{.}&\text{.}&\text{.}&\text{.}&\text{.}&\text{.}\\\text{1:}&\text{.}&\text{.}&\text{.}&\text{.}&\text{.}&\text{.}&\text{.}&\text{.}\\\text{2:}&\text{.}&4&\text{.}&\text{.}&\text{.}&\text{.}&\text{.}&\text{.}\\\text{3:}&\text{.}&15&12&\text{.}&\text{.}&\text{.}&\text{.}&\text{.}\\\text{4:}&\text{.}&\text{.}&121&288&285&144&33&\text{.}\\\text{5:}&\text{.}&\text{.}&\text{.}&\text{.}&\text{.}&\text{.}&\text{.}&1\\\end{matrix}\]
We observe that $\Obar_5$ is not Cohen-Macaulay because it has codimension 4 but its coordinate ring has projective dimension 7.

\subsubsection{The orbit $\OO_4$}
The orbit closure $\Obar_4$ is normal with rational singularities. The expected resolution for the coordinate ring $\CC [\Obar_4]$ is
\begin{gather*}
	A \leftarrow (3,0;2,2,2) \oplus (2,1;3,2,1) \leftarrow \\
	\leftarrow (2,2;3,3,2) \oplus (3,1;3,3,2) \oplus (3,1;4,2,2) \oplus (2,2;4,3,1) \leftarrow \\
	\leftarrow (3,2;5,3,2) \oplus (3,2;4,3,3) \leftarrow (3,3;6,3,3) \leftarrow 0
\end{gather*}
The differential $d_4$ was written explicitly. For the block, $(3,3;6,3,3) \rightarrow (3,2;4,3,3)$ take the map
\begin{center}
  \begin{tikzpicture}
    \matrix (m) [matrix of math nodes, row sep=10pt, column sep=45pt,
    text height=1.5ex, text depth=0.25ex, ampersand replacement=\&]
    {\Sc_3 F^* \& E \otimes E^* \otimes \Sc_2 F^* \otimes F^* \& E \otimes F^* \otimes A_1\\};
    \path[->,semithick,font=\scriptsize]
    (m-1-1) edge node[auto] {$\tr^{(1)} \otimes \Delta$} (m-1-2);
    \path[right hook->,semithick,font=\scriptsize]
    (m-1-2) edge node[auto] {} (m-1-3);
  \end{tikzpicture}
\end{center}
For the block $(3,3;6,3,3) \rightarrow (3,2;5,3,2)$ construct first the map
\begin{center}
  \begin{tikzpicture}
    \matrix (m) [matrix of math nodes, row sep=20pt, column sep=35pt,
	text height=1.5ex, text depth=0.25ex, ampersand replacement=\&]
    { \Sc_3 F^* \otimes  \bigwedge^3 F^* \\
    E \otimes E^* \otimes \Sc_2 F^* \otimes F^* \otimes F^* \otimes F^* \otimes F^* \\
    E \otimes E^* \otimes \Sc_3 F^* \otimes \Sc_2 F^* \otimes F^*\\
    E \otimes \Sc_3 F^* \otimes F^* \otimes A_1\\ };
    \path[->,semithick,font=\scriptsize]
    (m-1-1) edge node[auto] {$\tr^{(1)} \otimes \Delta \otimes \Delta$} (m-2-1)
    (m-2-1) edge node[auto] {$m_{3,5} \otimes m_{4,6}$} (m-3-1)
    (m-3-1) edge node[auto] {} (m-4-1);
  \end{tikzpicture}
\end{center}
Finally project onto $E \otimes \Sc_{(3,1)} F^* \otimes A_1$ by modding out $\Sc_3 F^* \otimes F^*$ by the image of the map
\begin{center}
  \begin{tikzpicture}
    \matrix (m) [matrix of math nodes, row sep=10pt, column sep=30pt,
    text height=1.5ex, text depth=0.25ex, ampersand replacement=\&]
    {\Sc_4 F^* \& \Sc_3 F^* \otimes F^*\\};
    \path[->,semithick,font=\scriptsize]
    (m-1-1) edge node[auto] {$\Delta$} (m-1-2);
  \end{tikzpicture}
\end{center}
The Betti table for the resolution is
\[\begin{matrix}
&0&1&2&3&4\\\text{total:}&1&20&45&36&10\\\text{0:}&1&\text{.}&\text{.}&\text{.}&\text{.}\\\text{1:}&\text{.}&\text{.}&\text{.}&\text{.}&\text{.}\\\text{2:}&\text{.}&20&45&36&10\\\end{matrix}\]
We conclude that $\Obar_4$ is \CM{}. The defining equations are the $3\times 3$ minors of the generic matrix of a linear map $E\otimes F \rightarrow F^*$ after symmetrizing indices on the $F^*$ side.

\subsubsection{The orbit $\OO_3$}\label{F42O3}
The orbit closure $\Obar_3$ is normal with rational singularities. It is degenerate with equations given by the $2\times 2$ minors of
\[
	\begin{pmatrix}
	x_{111} & x_{211} \\
	x_{112} & x_{212} \\
	x_{113} & x_{213} \\
	x_{122} & x_{222} \\
	x_{123} & x_{223} \\
	x_{133} & x_{233} 
	\end{pmatrix}
\]
the generic matrix of a linear map $E\rightarrow \Sc_2 F^*$. The Betti table for the resolution is
\[\begin{matrix}
&0&1&2&3&4&5\\\text{total:}&1&15&40&45&24&5\\\text{0:}&1&\text{.}&\text{.}&\text{.}&\text{.}&\text{.}\\\text{1:}&\text{.}&15&40&45&24&5\\\end{matrix}\]
It follows that $\Obar_3$ is Cohen-Macaulay.

\subsubsection{The orbit $\OO_2$}\label{F42O2}
The orbit closure $\Obar_2$ is normal with rational singularities. It is degenerate with equations given by the $2\times 2$ minors of the generic matrix of a linear map $E\rightarrow \Sc_2 F^*$ together with the coefficients of the determinant of a generic $3\times 3$ matrix of linear forms in two variables. The former are the equations of $\Obar_3$ while the latter are the coefficients $a_{3,0}, a_{2,1}, a_{1,2}, a_{0,3}$ of $\delta$ as defined in \ref{F42O9}.
The Betti table for the resolution is
\[\begin{matrix}
&0&1&2&3&4&5&6\\\text{total:}&1&19&58&75&44&11&2\\\text{0:}&1&\text{.}&\text{.}&\text{.}&\text{.}&\text{.}&\text{.}\\\text{1:}&\text{.}&15&40&45&24&5&\text{.}\\\text{2:}&\text{.}&4&18&30&20&\text{.}&\text{.}\\\text{3:}&\text{.}&\text{.}&\text{.}&\text{.}&\text{.}&6&2\\\end{matrix}\]
It follows that $\Obar_2$ is Cohen-Macaulay.

\subsubsection{The orbit $\OO_1$}\label{F42O1}
The orbit closure $\Obar_1$ is normal with rational singularities. It is degenerate with equations given by the $2\times 2$ minors of the generic matrix of a linear map $E\rightarrow \Sc_2 F^*$ together with the coefficients of the $2\times 2$ minors of a generic $3\times 3$ matrix of linear forms in two variables. The former are the equations of $\Obar_3$ while the latter are the coefficients of the $2\times 2$ minors of the matrix defined in \ref{F42O9}.
The Betti table for the resolution is
\[\begin{matrix}
&0&1&2&3&4&5&6&7&8\\\text{total:}&1&33&144&294&336&210&69&16&3\\\text{0:}&1&\text{.}&\text{.}&\text{.}&\text{.}&\text{.}&\text{.}&\text{.}&\text{.}\\\text{1:}&\text{.}&33&144&294&336&210&48&\text{.}&\text{.}\\\text{2:}&\text{.}&\text{.}&\text{.}&\text{.}&\text{.}&\text{.}&21&16&3\\\end{matrix}\]
It follows that $\Obar_1$ is Cohen-Macaulay.

\subsection{The case $(F_{4}, \alpha_3)$}
\label{F43}

The representation is $E \otimes F$, where $E=\CC^2$ and
$F=\CC^3$; the group acting is $\SL (E) \times \SL (F) \times
\CC^\times$. The orbit closures for this representation are classical determinantal varieties. For a description of the minimal free resolutions of their coordinate rings, the reader can consult \cite[Ch. 6]{MR1988690}, for example.

\subsection{The case $(F_{4}, \alpha_4)$}
\label{F44}

The representation is $V=V(\omega_3,B_3)$, the third fundamental representation of the group $\SO (7,\CC)$; the group acting is $\Spin (7)$. The corresponding polynomial ring is
\[A=\CC [x_{1231},x_{1221},x_{1121},x_{0121},x_{1111},x_{0111},x_{0011},x_{0001}] = \Sym (V^*).\]
The variables in $A$ are indexed by the roots in $\gg_1$. The variables are weight vectors in $V^*$, with the following weights:
\[
\begingroup
\renewcommand*{\arraystretch}{1.3}
\begin{array}{lcl}
x_{1231} \leftrightarrow \frac{1}{2} (\epsilon_1 + \epsilon_2 + \epsilon_3) & & x_{1221} \leftrightarrow \frac{1}{2} (\epsilon_1 + \epsilon_2 - \epsilon_3)\\
x_{1121} \leftrightarrow \frac{1}{2} (\epsilon_1 - \epsilon_2 + \epsilon_3) & & x_{0121} \leftrightarrow \frac{1}{2} (-\epsilon_1 + \epsilon_2 + \epsilon_3)\\
x_{1111} \leftrightarrow \frac{1}{2} (\epsilon_1 - \epsilon_2 - \epsilon_3) & & x_{0111} \leftrightarrow \frac{1}{2} (- \epsilon_1 + \epsilon_2 - \epsilon_3)\\
x_{0011} \leftrightarrow \frac{1}{2} (- \epsilon_1 - \epsilon_2 + \epsilon_3) & & x_{0001} \leftrightarrow \frac{1}{2} (- \epsilon_1 - \epsilon_2 - \epsilon_3)
\end{array}
\endgroup
\]

In characteristic zero, the representation has the following orbits, listed along with the dimension of the closure and a representative:
\begin{center}
\begin{tabular}{ccc}
orbit & dimension & representative\\ \hline
$\OO_0$ & 0& 0\\
$\OO_1$ & 7& $x_{0001} = 1$\\
$\OO_2$ & 8& $x_{0001} = x_{1231} =1$
\end{tabular}
\end{center}
All the orbit closures are normal, Cohen-Macaulay, Gorenstein and have rational singularities. Here is the containment and singularity table:
\begin{center}
\begin{tabular}{c|c|c|c|}
& $\Obar_0$ & $\Obar_1$ & $\Obar_2$\\ \hline \hline
$\OO_0$ & ns & s & ns\\ \hline
$\OO_1$ & & ns & ns\\ \hline
$\OO_2$ & & & ns\\ \hline
\end{tabular}
\end{center}

\subsubsection{The orbit $\OO_1$}

The variety $\Obar_1$ is the closure of the highest weight vector orbit. It is a hypersurface defined by an invariant of degree 2. The invariant was described explicitly by Igusa \cite{MR0277558}.
\section{Representations of type $G_2$}\label{type_G2}
In this section, we will analyze the cases corresponding to gradings on the simple Lie algebra of type $G_2$. Each case corresponds to the choice of a distinguished node on the Dynkin diagram for $G_2$. The nodes are numbered according to the conventions in Bourbaki \cite{MR0240238}.
\begin{center}
\begin{tikzpicture}[scale=0.8]
	\triplelinearrowright{(0,0)}
	\rootnode{(0,0)}{$\alpha_1$}
	\rootnode{(2,0)}{$\alpha_2$}
\end{tikzpicture}
\end{center}

\subsection{The case $(G_{2}, \alpha_1)$}
\label{G21}

The representation is $E=\CC^2$ and the group acting is $\GL (E)$. The only orbits in this case are the origin and the dense orbit.

\subsection{The case $(G_{2}, \alpha_2)$}
\label{G22}

The representation is $\Sc_3 E$, where $E=\CC^2$; the group acting is $\GL (E)$. The corresponding polynomial ring is
\[A=\QQ [x_{ijk} \mid 1\leq i\leq j\leq 2] = \Sym (\Sc_3 E^*).\]

In characteristic zero, the representation has the following orbits, listed along with the dimension of the closure and a representative:
\begin{center}
\begin{tabular}{ccc}
orbit & dimension & representative\\ \hline
$\OO_0$ & 0& 0\\
$\OO_1$ & 2& $x_{111} = 1$\\
$\OO_2$ & 3& $x_{112} = 1$\\
$\OO_3$ & 4& $x_{111} = x_{222} = 1$
\end{tabular}
\end{center}
Here is the containment and singularity table:
\begin{center}
\begin{tabular}{c|c|c|c|c|}
& $\Obar_0$ & $\Obar_1$ & $\Obar_2$ & $\Obar_3$\\ \hline \hline
$\OO_0$ & ns & s & s & ns\\ \hline
$\OO_1$ & & ns & s & ns\\ \hline
$\OO_2$ & & & ns & ns\\ \hline
$\OO_3$ & & & & ns\\ \hline
\end{tabular}
\end{center}
We denote the free $A$-module $\Sc_{(a,b)} E^* \otimes A(-(a+b)/3)$ by $(a,b)$.

\subsubsection{The orbit $\OO_2$}\label{G22O2}
The orbit closure $\Obar_2$ is not normal. The expected equivariant resolution for the coordinate ring of the normalization of $\Obar_2$ is
\[A \oplus (2,1) \leftarrow (4,2) \leftarrow 0\]
The orbit closure is a hypersurface, so there is only one differential $d_1$. The defining equation of $\Obar_9$ is the determinant of $d_1$. Alternatively this equation can be obtained using the cone procedure.

The block $(4,2)\rightarrow (2,1)$ of $d_1$ was constructed by taking the map
\begin{center}
  \begin{tikzpicture}
    \matrix (m) [matrix of math nodes, row sep=10pt, column sep=35pt,
    text height=1.5ex, text depth=0.25ex, ampersand replacement=\&]
    {\bigwedge^2 E^* \otimes \Sc_2 E^* \& E^* \otimes E^* \otimes E^* \otimes E^* \& E^* \otimes A_1\\};
    \path[->,semithick,font=\scriptsize]
    (m-1-1) edge node[auto] {$\Delta\otimes\Delta$} (m-1-2);
    \path[->,semithick,font=\scriptsize]
    (m-1-2) edge node[auto] {$m_{2,3,4}$} (m-1-3);
  \end{tikzpicture}
\end{center}

The block $(4, 2) \rightarrow A$ can be obtained as follows:
\begin{center}
  \begin{tikzpicture}
    \matrix (m) [matrix of math nodes, row sep=20pt, column sep=35pt,
	text height=1.5ex, text depth=0.25ex, ampersand replacement=\&]
    {\bigwedge^2 E^* \otimes \bigwedge^2 E^* \otimes \Sc_2 E^* \\
    E^* \otimes E^* \otimes E^* \otimes E^* \otimes E^* \otimes E^*\\
    A_1 \otimes A_1\\
    A_2\\};
    \path[->,semithick,font=\scriptsize]
    (m-1-1) edge node[auto] {$\Delta \otimes \Delta \otimes \Delta$} (m-2-1)
    (m-2-1) edge node[auto] {$m_{1,3,5} \otimes m_{2,4,6}$} (m-3-1)
    (m-3-1) edge node[auto] {} (m-4-1);
  \end{tikzpicture}
\end{center}
where the last step is symmetric multiplication.

The Betti table for the resolution of the normalization is
\[\begin{matrix}
&0&1\\\text{total:}&3&3\\\text{0:}&1&\text{.}\\\text{1:}&2&3\\\end{matrix}
\]
and the Betti table for the resolution of the cokernel $C(2)$ of the inclusion $\CC[\Obar_2] \hookrightarrow \CC[\mathcal{N} (\Obar_2)]$ is
\[
\begin{matrix}
&0&1&2\\\text{total:}&2&3&1\\\text{1:}&2&3&\text{.}\\\text{2:}&\text{.}&\text{.}&1\\
\end{matrix}
\]

\subsubsection{The orbit $\OO_1$}
The orbit closure $\Obar_1$ is normal with rational singularities. The expected equivariant resolution for the coordinate ring of $\CC[\Obar_1]$ is
\[A\leftarrow (4,2) \leftarrow (5,4) \leftarrow 0\]
Notice that the first differential is the second block of the differential described in \ref{G22O2}. Here we describe how to explicitly construct $d_2 : (5,4) \rightarrow (4,2)$, the second differential in the resolution. This is obtained by writing the map
\begin{center}
  \begin{tikzpicture}
    \matrix (m) [matrix of math nodes, row sep=20pt, column sep=35pt,
	text height=1.5ex, text depth=0.25ex, ampersand replacement=\&]
    {\bigwedge^2 E^* \otimes \bigwedge^2 E^* \otimes E^* \\
    E^* \otimes E^* \otimes E^* \otimes E^* \otimes E^* \\
    \Sc_2 E^* \otimes A_1\\};
    \path[->,semithick,font=\scriptsize]
    (m-1-1) edge node[auto] {$\Delta \otimes \Delta$} (m-2-1)
    (m-2-1) edge node[auto] {$m_{2,3} \otimes m_{1,4,5}$} (m-3-1);
  \end{tikzpicture}
\end{center}
The Betti table for the resolution is
\[
\begin{matrix}
&0&1&2\\
\text{total:}&1&3&2\\
\text{0:}&1&\text{.}&\text{.}\\
\text{1:}&\text{.}&3&2\\
\end{matrix}
\]
It follows that the orbit closure $\Obar_1$ is \CM{}.
\appendix
\section{Equivariant maps}\label{eq_maps}
In this appendix, we give a brief description of the equivariant maps between representations that are used to construct the differentials in our complexes (more details can be found in \cite[Ch. 1]{MR1988690}). The notation introduced here is used extensively throughout sections \ref{type_E6}, \ref{type_F4} and \ref{type_G2}. We use the symbol $\Sc_\lambda$ to denote the Schur functor associated to the partition $\lambda$. In particular, $\Sc_i = \Sc_{(i)} = \Sym_i$ denotes the $i$-th symmetric power. In order to simplify the notation as much as possible, we omit to write any symbol for the identity and other obvious maps.

Throughout this appendix, $E$ denotes a complex vector space of finite dimension $n$ with basis $\{e_1,\ldots,e_n\}$; $E^*$ denotes the dual of $E$ and we take $\{e^*_1,\ldots,e^*_n\}$ to be the basis dual to $\{e_1,\ldots,e_n\}$. All the maps we describe are $\GL (E)$-equivariant.

\subsection{Diagonals}
Let $r,s$ be natural numbers such that $0 \leq r+s \leq n$. The \emph{exterior diagonal} is the map:
\[\Delta : \bigwedge^{r+s} E \longrightarrow \bigwedge^r E \otimes \bigwedge^s E\]
with
\[\Delta(e_1 \wedge \ldots \wedge e_{r+s}) = \sum_{\sigma\in\mathfrak{S}^{r,s}_{r+s}} (-1)^{\sgn (\sigma)} e_{\sigma (1)} \wedge \ldots \wedge e_{\sigma (r)} \otimes e_{\sigma (r+1)} \wedge \ldots \wedge e_{\sigma (r+s)}.\]
Here
\[\mathfrak{S}^{r,s}_{r+s} := \{ \sigma \in \mathfrak{S}_{r+s} \mid \sigma (1) < \ldots < \sigma (r), \sigma (r+1) < \ldots < \sigma (r+s)\}\]
where $\mathfrak{S}_{d}$ denotes the symmetric group on $d$ letters and $\sgn (\sigma)$ is the sign of the permutation $\sigma$.

Now let $r,s$ be arbitrary natural numbers. The \emph{symmetric diagonal} is the map:
\[\Delta : \Sc_{r+s} E \longrightarrow \Sc_r E \otimes \Sc_s E\]
with
\[\Delta(e_1 \ldots e_{r+s}) = \sum_{\sigma\in\mathfrak{S}^{r,s}_{r+s}} e_{\sigma (1)} \ldots e_{\sigma (r)} \otimes e_{\sigma (r+1)} \ldots e_{\sigma (r+s)}.\]

Both diagonals can be generalized to the case where the codomain is a tensor product of more than two exterior or symmetric powers of $E$ in the obvious way. It will be clear from the context whether we are using the exterior or symmetric diagonal and how many and what factors we are taking in the codomain.

\subsection{Multiplications}
Let $r,s$ be natural numbers such that $0 \leq r+s \leq n$. The \emph{exterior multiplication} is the map:
\[m : \bigwedge^r E \otimes \bigwedge^s E \longrightarrow \bigwedge^{r+s} E \]
with
\[m (u_1 \wedge \ldots \wedge u_r \otimes v_1 \wedge \ldots \wedge v_s) = u_1 \wedge \ldots \wedge u_r \wedge v_1 \wedge \ldots \wedge v_s .\]

Now let $r,s$ be arbitrary natural numbers. The \emph{symmetric multiplication} is the map:
\[m : \Sc_r E \otimes \Sc_s E \longrightarrow \Sc_{r+s} E\]
with
\[m (u_1 \ldots u_r \otimes v_1 \ldots v_s) = u_1 \ldots u_r v_1 \ldots v_s.\]

Both multiplications can be generalized to the case where the domain is a tensor product of more than two exterior or symmetric powers of $E$ in the obvious way. It will be clear from the context whether we are using the exterior or symmetric multiplication and how many and what factors we are taking in the domain.

When the tensor factors that we wish to multiply are not adjacent, we use subscripts to clarify which factors we are multiplying. For example, we write $m_{1,3} : E\otimes E\otimes E \rightarrow \Sc_2 E \otimes E$ to indicate we apply the symmetric multiplication to the first and third factor, leaving the second one alone.

\subsection{Traces}
Let $r$ be a natural number such that $0 \leq r \leq n$. The \emph{exterior trace} is the map:
\[\tr^{(r)} : \CC \longrightarrow \bigwedge^r E \otimes \bigwedge^r E^*\]
with
\[\tr^{(r)}(1) = \sum_{1\leq i_1 < \ldots < i_r \leq n} e_{i_1} \wedge \ldots \wedge e_{i_r} \otimes e^*_{i_1} \wedge \ldots \wedge e^*_{i_r}.\]

Now let $r$ be an arbitrary natural number. The \emph{symmetric trace} is the map:
\[\tr^{(r)} : \CC \longrightarrow \Sc_r E \otimes \Sc_r E^*\]
with
\[\tr^{(r)}(1) = \sum_{1\leq i_1 \leq \ldots \leq i_r \leq n} e_{i_1}  \ldots  e_{i_r} \otimes e^*_{i_1}  \ldots  e^*_{i_r}.\]

It will be clear from the context whether we are using the exterior or symmetric trace.

\subsection{Exterior duality}
Let $r$ be a natural number such that $0 \leq r \leq n$. The \emph{exterior duality} is the map:
\[{}^* : \bigwedge^r E \longrightarrow \bigwedge^{n-r} E^*\]
with
\[{}^* (e_{i_1} \wedge \ldots \wedge e_{i_r}) = (-1)^{\sgn (\sigma)} e^*_{j_1} \wedge \ldots \wedge e^*_{j_{n-r}},\]
where $1\leq i_1 < \ldots < i_r \leq n$, $1\leq j_1 < \ldots < j_{n-r} \leq n$, $\{i_1,\ldots, i_r\} \cup \{j_1,\ldots, j_{n-r}\} = \{1,\ldots, n\}$ and $\sigma$ is the permutation
\[
\begin{pmatrix}
1 & \ldots & r & r+1 & \ldots & n\\
i_1 & \ldots & i_r & j_1 & \ldots & j_{n-r}
\end{pmatrix}.
\]
Similarly we can define an exterior duality ${}^* : \bigwedge^r E^* \longrightarrow \bigwedge^{n-r} E$.

\bibliographystyle{plain}
\bibliography{/Users/fez/BibTeX/math.bib}
\addcontentsline{toc}{section}{References}

\end{document}